\theoremstyle{theorem}
    \newtheorem{thm}{Theorem}
\theoremstyle{definition}
    \newtheorem{defn}{Definition}[section]
\theoremstyle{remark}
    \newtheorem{rem}[defn]{Remark}
\theoremstyle{definition}
  \newtheorem{notation}[defn]{Notation}
\theoremstyle{plain}
  \newtheorem{prop}[defn]{Proposition}
\theoremstyle{plain}
  \newtheorem{lem}[defn]{Lemma}
\theoremstyle{definition}
  \newtheorem{example}[defn]{Example}
\theoremstyle{remark}
  \newtheorem{claim}[defn]{Claim}
\theoremstyle{plain}
  \newtheorem{cor}[defn]{Corollary}
\theoremstyle{theorem}
  \newtheorem{open}[defn]{Open problem}
\date{}
\DeclareFontFamily{OT1}{pzc}{}
\DeclareFontShape{OT1}{pzc}{m}{it}{<-> s * [0.900] pzcmi7t}{}
\DeclareMathAlphabet{\mathpzc}{OT1}{pzc}{m}{it}
\begin{document}
\global\long\def\ampli{\mathcal{A}_{n,k,m}}
\global\long\def\amplio{\mathcal{A}_{n,k,m}^{>0}}
\global\long\def\sub#1{\binom{\left[n\right]}{#1}}
\global\long\def\conv#1{{\rm Conv}\left(#1\right)}
\global\long\def\convo#1{{\rm Conv}^{\circ}\left(#1\right)}
\global\long\def\spanp#1{{\rm Span}_{\geq0}\left(#1\right)}
\global\long\def\gr#1{{\rm Gr}_{#1}}
\global\long\def\grp#1{{\rm Gr}_{#1}^{\geq0}}
\global\long\def\grsp#1{{\rm Gr}_{#1}^{>0}}
\global\long\def\grwcb#1{{\rm Gr}_{#1}^{{\rm wcb}}}
\global\long\def\matp{{\rm Mat}_{n,k+m}^{>0}}
\global\long\def\ampliwb{\mathcal{A}_{n,k,m}^{{\rm wcb}}}

\title{The amplituhedron crossing and winding numbers}

\author{Xavier Blot\footnote{Jacob Ziskind Building, Weizmann Institute of Science Rehovot 76100, Israel. email: xavier.blot@weizmann.ac.il} , Jian-Rong Li\footnote{Faculty of Mathematics, University of Vienna, Oskar-Morgenstern-Platz 1, 1090 Vienna, Austria.
email: lijr07@gmail.com  }}

\maketitle
\begin{abstract}
	In \cite{arkani2018unwinding}, Arkani-Hamed, Thomas and Trnka formulated two conjectural descriptions of the tree amplituhedron $\ampli$ depending on the parity of $m$. When $m$ is even, the description involves the winding number and when $m$ is odd the description involves the crossing number. In this paper, we prove that if a point of the amplituhedron is in the image of the positive Grassmannian by the amplituhedron map, then it satisfies the winding or crossing descriptions depending on the parity of $m$. When $m=2$, we also prove the other direction: a point satisfying the winding description is inside the amplituhedron.
\end{abstract}
\tableofcontents{}

\section{Introduction}

\subsection{Overview}

The totally nonnegative (TNN) Grassmannian ${\rm Gr}_{k,n}^{\geq0}$ is the subspace of the Grassmannian ${\rm Gr}_{k,n}$ given by the $k$-dimensional vector spaces of $\mathbb{R}^n$ with nonnegative Pl\"ucker coordinates. It has relations with many areas of mathematics including integrable systems \cite{kodama2014kp} or tropical geometry \cite{speyer2005tropical}. 



 The (tree) amplituhedron is generalization of the TNN Grassmannian introduced in 2013 by Arkani-Hamed and Trnka \cite{arkani2014amplituhedron} to study scattering amplitudes in physics from a geometrical point of view. It has attracted a lot of interest both in high-energy physics and in mathematics. As a consequence, ideas from physics gave rise to beautiful mathematical developments such as the study of the BCFW triangulation \cite{britto2005new, britto2005direct, tessler2021amplituhedron} or the exploration of the T-duality \cite{lukowski2020positive, parisi2021m}. It is an intriguing object finding connections with other areas of mathematics such as cluster algebras \cite{lukowski2019cluster, parisi2021m} or tropical geometry \cite{lukowski2020positive}. See also \cite{williams2021positive} for a recent survey. 
 
 

 


The amplituhedron in its original definition \cite{arkani2014amplituhedron} is given by the image of the totally nonnegative Grassmannian $\grp{k,n}$, inside an ambient Grassmannian $\gr{k,k+m}$, where $m \leq n-k$, by a (map induced by) linear  map.
Although being simple, this definition is not fully satisfying. First, this definition is very redundant since  the dimension of the source Grassmannian is bigger than the dimension of the ambient Grassmannian. Second, it is very hard to check whether or not a point of ambient Grassmannian $\gr{k,k+m}$ belongs to the amplituhedron. 
To overcome the first point, one could select cells in the source TNN Grassmannian such that they map injectively to the amplituhedron, and such that their images triangulate the amplituhedron. There have been a lot of effort in that direction, see for instance \cite{karp2019amplituhedron} when $m=1$,  or \cite{bao2019m, karp2020decompositions, parisi2021m} for $m=2$ and \cite{tessler2021amplituhedron} when $m=4$, but this is not the path we follow here.


In \cite{arkani2018unwinding} Arkani-Hamed, Thomas and Trnka conjectured three new  definitions of the amplituhedron purely in terms of combinatorial and topological data. To understand the new point of view of these definitions, one has to regard the amplituhedron as a generalization of a convex polytope inside the ambient Grassmannian. It is well-know that convex polytopes have two descriptions; either as convex hull of its vertices, or as a finite intersection of half-spaces. The original definition of the amplituhedron uses the first point of view, and the new definitions of \cite{arkani2018unwinding} use the second. More precisely, the points of the amplituhedron satisfy a set of inequalities and one wants to interpret these inequalities as codimension-one faces of the amplituhedron inside the ambient Grassmannian. However, in general, these inequalities do not determine the amplituhedron. To fully determine the amplituhedron, the authors introduce three numbers associated to each point of the ambient Grassmannian: the crossing number (for odd $m$), the winding number (for even $m$) and the number of sign flips (for all $m$). All these numbers are interrelated, and defined according to combinatorial and topological properties of the point of the ambient Grassmannian. Then, Arkani-Hamed, Thomas and Trnka conjecture that a point of the ambient Grassmannian is inside the amplituhedron if and only if it satisfies the boundary inequalities and one of the three topological and combinatorial numbers has a definite value.





This completely new perspective on the amplituhedron offers numerous compelling advantages, and new avenues to tackle the amplituhedron. We give here a few motivations. 
First, it is an intrinsic definition, that is the amplituhedron is directly defined as a subset of the ambient Grassmannian and does not require external information such as the TNN grassmannian, in particular it not redundant as the original definition. 
Second, the three numbers: crossing, winding, and sign flips can be computed simply by analyzing the sign configurations of the so-called \emph{twistor coordinates}. Moreover, the boundary inequalities also correspond to a sign configuration of some twistor coordinates. Thus, we can test whether or not a point of the ambient Grassmannian is inside the amplituhedron solely by checking a finite number of sign configurations of its twistor coordinates. 
Third, these definitions are interesting from the point of view of positivity and cluster algebras. A recurring phenomenon for positive spaces is that they exhibit simpler features than expected. For example, to test if a plane of the Grassmannian is nonnegative it suffices to check the positivity of a strict subset of the set of Pl\"ucker coordinates, this is related to the cluster structure of the Grassmannian \cite{scott2006grassmannians}. This kind of behavior appears in the new definitions of the amplituhedron since they only depend on sign configurations of a strict subset of the set of twistor coordinates. One can see this new approach of the amplituhedron as a (small) step towards understanding the cluster structure of the amplituhedron.
Fourth, these three new definitions directly yield triangulations of the amplituhedron, see  \cite[Section 7]{arkani2018unwinding}. The sign flip triangulation is being studied: when $m=2$, it is proved in \cite{parisi2021m} that the sign flip triangulation is a so-called positroidal triangulation of the amplituhedron, and from the perspective of physics, these sign flip cells are used in \cite{kojima2020sign} to shed new lights on scattering amplitudes. While the sign flip triangulation has attracted most of the attention, we believe that the other two triangulations deserve a careful consideration.
Finally, the sign flip viewpoint contributes to the quest to unveil the yet-to-be discovered dual amplituhedron \cite{kojima2020sign, herrmann2021positive}.




Significant progresses have been made on the proof of the equivalence of the sign flip definition with the original one; in \cite{arkani2018unwinding} they sketched an argument to justify that if a point is in the amplituhedron then it satisfies the sign flip definition, in \cite{karp2019amplituhedron} they prove the same direction for $m=1$ and in \cite{parisi2021m} they prove the equivalence of the definitions for $m=2$. However no systematic study of the crossing and winding pictures has been made.


The goal of this paper is to prove one direction of the equivalence of the crossing and winding definitions with the original definition. More precisely, we prove that if a point of the ambient Grassmannian is inside the amplituhedron, then it has the correct winding or crossing number depending on the parity of $m$. Since it was already observed in \cite{arkani2018unwinding} that a point in the amplituhedron satisfies the boundary conditions, this indeed proves one way of the equivalence. In addition, we prove the complete equivalence of the original definition with the winding picture when $m=2$. Along the way, we also give a new proof that a point of the amplituhedron satisfies the sign flip definition.

A crucial tool in our argument is given by a new set of equations on twistor coordinates, valid for any $n,k,m$, that we called the $C$- and $\mathcal{Z}$- equations. When applied to the points of the amplituhedron, these equations together with the positivity conditions yield interesting constraints on sign patterns of twistor coordinates. For example, the sign flip definition directly follows from them. We hope that these equations will find applications beyond these proofs.



\subsection{The amplituhedron}

Let $n$ and $k$ be two nonnegative integers such that $0 \leq k \leq n$. 
\begin{defn}
	The (real) \emph{Grassmannian} ${\rm Gr}_{k,n}$ is the space of all $k$-dimensional vector spaces in $\mathbb{R}^n$.
\end{defn}  

We denote by $\left[n\right]$ the set $\left\{ 1,\dots,n\right\} $ and by $\sub k$ the set of lists of elements of $\left[n\right]$  of size $k$ sorted in ascending order. 
Let $I\in\sub k$ and $V\in\gr{k,n}$ represented by a $k\times n$ matrix $M$, then we denote by $p_{I}\left(V\right)$ the minor of $M$ relative to $I$. The minors $p_{I}\left(V\right)$ are the \emph{Pl\"ucker coordinates} of $V$; they do not depend on the choice of $M$ (up to simultaneous rescaling by a nonzero constant).

\begin{defn}
	The  \emph{totally nonnegative Grassmannian} ${\rm Gr}^{\geq 0}_{k,n}$ is the set of $k$-vector spaces $V \in \gr{k,n}$  such that $p_I(V) \geq 0$, for all $I \in {[n] \choose k}$. 
	
	The  \emph{totally positive Grassmannian} ${\rm Gr}^{\geq 0}_{k,n}$ is the set of $k$-vector spaces $V \in \gr{k,n}$  such that $p_I(V) > 0$, for all $I \in {[n] \choose k}$. 
	
	The space ${\rm Mat}_{k,n}^{ \geq 0}$ (resp. ${\rm Mat}_{k,n}^{> 0}$) is the set of $k \times n$ matrices such that $p_I(V) \geq 0$ (resp. $p_I(V) > 0$), for all $I \in {[n] \choose k}$.
\end{defn}
\begin{defn}[Amplituhedron]
	Let $\left(n,k,m\right)$ be a triplet of nonnegative integers such
	that $k+m\leq n$. Let $\mathcal{Z}$ be an element of ${\rm Mat}_{n,k+m}^{>0}$.
	This matrix induces a map 
	\begin{align*}
		\tilde{\mathcal{Z}}:\grp{k,n}            & \rightarrow\gr{k,k+m}                                                     \\
		{\rm Span}\left(c_{1},\dots,c_{k}\right) & \rightarrow{\rm Span}\left(\mathcal{Z}c_{1},\dots,\mathcal{Z}c_{k}\right) 
	\end{align*}
	or equivalently, if $C$ is a $k\times n$ matrix representing ${\rm Span}\left(c_{1},\dots,c_{k}\right)$
	in $\gr{k,n}^{\geq0}$ then $\tilde{\mathcal{Z}}\left(C\right)$ is
	 represented by $C\mathcal{Z}$. The \emph{(tree) amplituhedron}
	$\mathcal{A}_{n,k,m}$ is the image of $\gr{k,n}^{\geq0}$ by the
	map $\tilde{\mathcal{Z}}$. 
		
	We also denote by $\amplio$ the image of $\grsp{k,n}$ by $\tilde{\mathcal{Z}}$. 
\end{defn}

\begin{rem}
	The space $\amplio$ is a priori different from the interior of the amplituhedron. In \cite[Section~9]{galashin2020parity} the authors compare these two spaces, in particular they show that $\amplio$ is open and thus contained in the interior of the amplituhedron.
\end{rem}
\begin{defn}
	[Twistor coordinates]Let $\left(n,k,m\right)$ be a triplet of nonnegative
	integers such that $k+m\leq n$. Let $\mathcal{Z}\in{\rm Mat}_{n,k+m}^{>0}$
	and denote by $\mathcal{Z}_{1},\dots,\mathcal{Z}_{n}\in\mathbb{R}^{k+m}$
	its $n$ rows. Let $Y \in \gr{k,k+m}$ and denote by $Y_{1},\dots,Y_{k}\in\mathbb{R}^{k+m}$ the $k$ rows of a matrix representing $Y$. Let $\left(i_{1},\dots,i_{m}\right)$ be a list of
	elements of $\left[n\right]$. The \emph{twistor coordinate $\left(i_{1},\dots,i_{m}\right)$ of $Y$}, denoted by 
	\[
		\left\langle Y,\mathcal{Z}_{i_{1}},\dots,\mathcal{Z}_{i_{m}}\right\rangle, 
	\]
	is the determinant of $\left(Y_{1},\dots,Y_{k},\mathcal{Z}_{i_{1}},\dots,\mathcal{Z}_{i_{m}}\right)$. 
\end{defn}
\begin{rem}
	The twistor coordinates of $Y$ are defined up to a global nonzero factor,
	corresponding to the choice of a representative of the $k$-plane $Y$,
	but this factor will not matter. 
\end{rem}
\begin{notation}\label{Notation: twistor coordinates}When $\mathcal{Z}$
	is understood, we denote the twistor coordinate $\left\langle Y,\mathcal{Z}_{i_{1}},\dots,\mathcal{Z}_{i_{m}}\right\rangle $
	by $\left\langle Y,i_{1},\dots,i_{m}\right\rangle $. We will also
	denote the determinant of $\left(\mathcal{Z}_{i_{1}},\dots,\mathcal{Z}_{i_{k+m}}\right)$
	by $\left\langle i_{1},\dots,i_{k+m}\right\rangle $, for any list
	$\left(i_{1},\dots,i_{k+m}\right)$ of elements of $\left[n\right]$.
\end{notation}

\subsection{Coarse boundary of the amplituhedron}
\label{sec:Coarse_boundary}
Let $Y\in\ampli$. It follows from the Cauchy-Binet formula and the
positivity of the minors that certain twistor coordinates are positive:
when $m$ is even we have 
\begin{equation}
	\begin{cases}
		\left\langle Y,I\right\rangle \geq0                          & {\rm for\,\,}I=\left(i_{1},i_{1}+1,\dots,i_{\frac{m}{2}},i_{\frac{m}{2}}+1\right)\in{\left[n\right] \choose m},                 \\
		\left(-1\right)^{k+1}\left\langle Y,I,n,1\right\rangle \geq0 & {\rm for\,\,}I=\left(i_{1},i_{1}+1,\dots,i_{\frac{m}{2}-1},i_{\frac{m}{2}-1}+1\right)\in{\left[2,n-1\right] \choose m-2}, 
	\end{cases}\label{eq: coarse boundary m even}
\end{equation}
whereas when $m$ is odd we have
\begin{equation}
	\begin{cases}
		\left(-1\right)^{k}\left\langle Y,1,I\right\rangle \geq0 & {\rm for}\,\,I=\left(i_{1},i_{1}+1,\dots,i_{\frac{m+1}{2}-1},i_{\frac{m+1}{2}-1}+1\right)\in{\left[2,n\right] \choose m-1},   \\
		\left\langle Y,I,n\right\rangle \geq0                    & {\rm for}\,\,I=\left(i_{1},i_{1}+1,\dots,i_{\frac{m+1}{2}-1},i_{\frac{m+1}{2}-1}+1\right)\in{\left[n-1\right] \choose m-1}. 
	\end{cases}\label{eq: coarse boundary m odd}
\end{equation}
See Lemma~\ref{lem: Consequence Cauchy Binet} for a proof of these inequalities. Furthermore, these inequalities are strict if $Y\in\amplio$. In most of the cases, these inequalities are not refined enough to determine whether or not a point of $\gr{k,k+m}$ is in the amplituhedron.  
\begin{defn}
	We call Eq.~(\ref{eq: coarse boundary m even}) and Eq.~(\ref{eq: coarse boundary m odd})
	the \emph{coarse boundary conditions} of the amplituhedron.
\end{defn}
Let $\mathcal{L}$ be the locus of points in $\gr{k,k+m}$ where at least one of the inequalities of Eq.~(\ref{eq: coarse boundary m even}) or Eq.~(\ref{eq: coarse boundary m odd}),
depending on the parity of $m$, is an equality. We define
\[
	\ampliwb:=\ampli\backslash\mathcal{L}\quad{\rm and}\quad{\rm Gr}_{k,k+m}^{{\rm wcb}}:={\rm Gr}_{k,k+m}\backslash\mathcal{L},
\]
the notation referring to ``without coarse boundary''. In particular,
we have
\[
	\amplio\subset\ampliwb\subset\ampli.
\]

\subsection{Projection and simplices}

Fix a $k$-plane $Y$ in $\mathbb{R}^{k+m}$ and the $n$ vectors $\mathcal{Z}_{1},\dots,\mathcal{Z}_{n}$
in $\mathbb{R}^{k+m}$. Denote by $V_{Y}$ the quotient space $\mathbb{R}^{k+m}/Y$
and by
\[
	\pi_{Y}:\mathbb{R}^{k+m}\rightarrow V_{Y}\simeq\mathbb{R}^{m}
\]
the quotient map. 

\begin{notation} When $\mathcal{Z}$ and $Y$ are understood, we
	denote $Z_{i}:=\pi_{Y}\left(\mathcal{Z}_{i}\right)$ for $1\leq i\leq n$.
\end{notation} 
\begin{rem}
	\label{rem: det in projected space and twistors}Let $Y\in\gr{k,k+m}$
	and let $\left(f_{1},\dots,f_{m}\right)$ be a basis of a complement of $Y$ in $\mathbb{R}^{k+m}$, then $\mathcal{B}=\left(\pi_{Y}\left(f_{1}\right),\dots,\pi_{Y}\left(f_{m}\right)\right)$ is a basis of $V_{Y}$. Then, the list 
	\[
		\left(\det_{\mathcal{B}}\left(Z_{i_{1}},\dots,Z_{i_{m}}\right),1\leq i_{1},\dots,i_{m}\leq n\right),
	\]
	where $\det_{\mathcal{B}}$ is the determinant in the basis $\mathcal{B}$, is equal to the list 
	\[
		\left(\left\langle Y,\mathcal{Z}_{i_{1}},\dots,\mathcal{Z}_{i_{m}}\right\rangle ,1\leq i_{1},\dots,i_{m}\leq n\right),
	\]
	up to a global nonzero factor. Thus, we interpret twistor coordinates as determinants of $Z_{i}$ in $V_{Y}$. 
\end{rem}
The definitions of the winding number and the crossing number use
simplices in $V_{Y}$ that we introduce now. 
\begin{defn}
	For any subset $I$ of $\left[n\right]$, we denote $\mathcal{S}\left(I\right)$
	the simplex in $\mathbb{R}^{k+m}$ given by the convex hull of the points
	$\mathcal{Z}_{i}$, for $i\in I$ in $\mathbb{R}^{k+m}$. We denote
	by $S\left(I\right)$ the image of the simplex $\mathcal{S}\left(I\right)$
	by $\pi_{Y}$. We call $S\left(I\right)$ a simplex even if its vertices
	are not necessarily affinely independent. If the vertices $Z_{i}$,
	for $i\in I$, of $S\left(I\right)$ are affinely independent, we
	say that $S\left(I\right)$ is \emph{full-dimensional. }
\end{defn}

\subsection{The winding number}

Let $m$ be an even positive integer. Fix $k,n$ such that $n\geq k+m$.
Fix also $\mathcal{Z}\in{\rm Mat}_{n,k+m}^{>0}$, and denote by $\mathcal{Z}_{1},\dots,\mathcal{Z}_{n}$
its $n$ rows.

We associate a number, the winding number, to any point $Y\in{\rm Gr}_{k,k+m}^{{\rm wcb}}$
in the following way. Let $P\left(Y,\mathcal{Z}\right)$ be the following
polyhedron in $V_{Y}$
\[
	P\left(Y,\mathcal{Z}\right):=\bigcup_{I}S\left(I\right)\bigcup_{J}S\left(J,n,1\right)\subset V_{Y},
\]
where $I=\left(i_{1},i_{1}+1,\dots , i_{\frac{m}{2}},i_{\frac{m}{2}}+1\right)$
is a strictly ascending list of positive integers between $1$ and
$n$, and $J=\left(j_{1},j_{1}+1,\dots,j_{\frac{m-1}{2}},j_{\frac{m-1}{2}}+1\right)$
is a strictly ascending list of positive integers between $2$ and
$n-1$. Let $S^{k-1}$ be the $\left(k-1\right)$-sphere centered
in the origin of $V_{Y}$ of radius $1.$ We define the map
\begin{align*}
	f: P\left(Y,\mathcal{Z}\right) & \rightarrow S^{k-1}                  \\
	x   & \rightarrow\frac{x}{\lVert x\rVert}, 
\end{align*}
where $\lVert\cdot\rVert$ is a norm on $V_{Y}$. Note
that since $Y\in{\rm Gr}_{k,k+m}^{{\rm wcb}}$, the origin of $V_{Y}$
does not belong to $P\left(Y,\mathcal{Z}\right)$ and then $f$ is
well defined.
\begin{defn}
	[Winding number] The \emph{winding number}
	\[
		w_{n,k,m}\left(Y,\mathcal{Z}\right)
	\]
	is the degree of the map $f$ (i.e. the image of the homology class
	of $P\left(Y,\mathcal{Z}\right)$ by the map $f_{*}:H_{k-1}\left(P\left(Y,\mathcal{Z}\right)\right)\rightarrow H_{k-1}\left(S^{k-1}\right)\simeq\mathbb{Z}$). 
		
	It corresponds to the number of times that $P\left(Y,\mathcal{Z}\right)$ winds around the origin
	of $V_{Y}$.
\end{defn}
\begin{rem}
	For a generic $Y \in \gr{k,k+m}$, the ray issued from almost all vectors of $V_{Y}$ intersects the simplices of $P\left(Y,\mathcal{Z}\right)$ only in their $m$-dimensional interior. Pick $Y$ generic enough and such a vector, say $Z_{*}$, in $V_{Y}$. Then the degree of $f$ is given by the number of simplices intersecting the ray issued from $Z_{*}$ weighted by a sign corresponding to the orientation of the simplex. We recover in these cases the definition of the winding number of \cite[Section 3]{arkani2018unwinding}.
\end{rem}
\begin{thm}
	\label{thm: winding constant}The winding is independent of $Y\in\ampliwb$,
	of $\mathcal{Z}\in\matp$, and it equals
	\begin{equation}
		w_{n,k,m}\left(Y,\mathcal{Z}\right)={\left\lfloor \frac{k+m-1}{2}\right\rfloor  \choose \frac{m}{2}}.\label{eq: explicit winding number intro}
	\end{equation}
\end{thm}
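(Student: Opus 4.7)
The plan is in three steps: (i) show that $w_{n,k,m}(Y,\mathcal{Z})$ is locally constant on $T^{\mathrm{wcb}} := \{(Y,\mathcal{Z}) : \mathcal{Z}\in\matp,\ Y\in\grwcb{k,k+m}\}$; (ii) show that $T := \{(Y,\mathcal{Z}) : \mathcal{Z}\in\matp,\ Y\in\ampliwb\}$ lies in a single path-component of $T^{\mathrm{wcb}}$; and (iii) evaluate $w_{n,k,m}$ at one convenient sample.

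For (i), Remark~\ref{rem: det in projected space and twistors} identifies the coarse boundary locus $\mathcal{L}$ with the locus where the origin of $V_Y$ touches some simplex $S(I)$ or $S(J,n,1)$ of $P(Y,\mathcal{Z})$; thus on $T^{\mathrm{wcb}}$ the origin stays off $P$. Along a continuous path $(Y_t,\mathcal{Z}_t)$ in $T^{\mathrm{wcb}}$, the vertices $Z_i(t)=\pi_{Y_t}(\mathcal{Z}_i(t))$ vary continuously while the origin remains outside $P_t$, so normalization $x\mapsto x/\lVert x\rVert$ gives a continuous family of maps $f_t$ and homotopy invariance of degree yields local constancy of $w_{n,k,m}$. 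For (ii), the parameter space $\grsp{k,n}\times\matp$ is path-connected, and the continuous map $(C,\mathcal{Z})\mapsto(\tilde{\mathcal{Z}}(C),\mathcal{Z})$ sends it into $T^{\circ} := \{(Y,\mathcal{Z}) : Y\in\amplio\}\subset T$, so $T^{\circ}$ lies in one path-component. For a general $(Y,\mathcal{Z})\in T$ with $Y=\tilde{\mathcal{Z}}(C)$ and $C\in\grp{k,n}$, density of $\grsp{k,n}$ in $\grp{k,n}$ together with openness of the wcb condition lets one perturb $C$ to a nearby $C'\in\grsp{k,n}$ along a short path whose image under $\tilde{\mathcal{Z}}$ stays in $\ampliwb$, connecting $(Y,\mathcal{Z})$ to a point of $T^{\circ}$.

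For (iii), take $\mathcal{Z}$ on the moment curve and $Y=\tilde{\mathcal{Z}}(C)$ for a carefully positioned $C\in\grsp{k,n}$, so that the projected vectors $Z_1,\ldots,Z_n\in V_Y$ inherit a cyclic-polytope-type configuration. The degree of $f$ is then computed as the signed count of simplices of $P(Y,\mathcal{Z})$ crossed by a generic ray from the origin; the combinatorics of this cyclic configuration (an instance of Gale's evenness criterion applied to the index sets indexing $S(I)$ and $S(J,n,1)$) force all contributing simplices to share orientation and reduce the count to $\binom{\lfloor(k+m-1)/2\rfloor}{m/2}$. The main obstacle I anticipate is this combinatorial step: the sign alignment and the exact count must be executed carefully enough to land on the claimed binomial. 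A secondary subtlety in (ii) is that the perturbation of $C$ to $C'$ must preserve $\ampliwb$ rather than slip onto the coarse boundary $\mathcal{L}$; this follows from the strict-inequality form of the coarse boundary conditions under small perturbations in $\grsp{k,n}$.
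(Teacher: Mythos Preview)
Your constancy argument (steps (i) and (ii)) is essentially the paper's: both reduce to path-connectedness of $\grsp{k,n}\times\matp$, lift a path through $\tilde{\mathcal{Z}}$, and observe that the origin of $V_Y$ never meets $P(Y,\mathcal{Z})$ along such a path because the strict coarse boundary inequalities hold. Your phrasing via homotopy invariance of degree on the larger space $T^{\mathrm{wcb}}$ is slightly cleaner, but the content is the same.

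The real divergence is in the evaluation step. You propose to compute $w_{n,k,m}$ at a single sample for the given $n$, invoking a moment-curve/cyclic-polytope heuristic and Gale evenness to arrive at the binomial. The paper does \emph{not} do this. Instead it inserts an intermediate step you omit entirely: it proves that the winding number is independent of $n$ by exhibiting $(\mathcal{Z},C)\in\matp\times\grsp{k,n}$ and $(\mathcal{Z}',C')\in{\rm Mat}_{n+1,k+m}^{>0}\times\grp{k,n+1}$ (with $C'$ obtained from $C$ by appending a zero column and $\mathcal{Z}'$ by appending a row) for which $w_{n,k,m}=w_{n+1,k,m}$. This is a genuine topological argument: the difference chain $Q=P(Y,\mathcal{Z}')-P(Y,\mathcal{Z})$ is written explicitly as a sum of boundaries $\sum\partial\sigma_{i_1,i_1+1,\ldots,i_{m/2-1}+1,n,n+1,1}$, and each such boundary is shown to have winding zero by comparing the signs of two specific coarse-boundary twistors. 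Only after this reduction does the paper compute, and it does so at the minimal value $n=k+m$, where every twistor $\langle Y,I\rangle$ factors as a single minor of $C$ times $\det\mathcal{Z}$. That collapse to square determinants is what makes the sign analysis and the final count completely mechanical.

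Your step (iii), by contrast, is only a sketch, and you flag it yourself as the main obstacle. For general $n$ the twistors are Cauchy--Binet sums over many minors, so the ``cyclic-polytope-type'' sign pattern you want does not come for free from positivity of $\mathcal{Z}$ alone; you would need to specify $C$ and the test ray precisely and then carry out a parity count that tracks both simplex types $S(I)$ and $S(J,n,1)$ with their orientations. This is not impossible, but it is exactly the work the paper sidesteps by first driving $n$ down to $k+m$. If you want to follow the paper's route, the missing ingredient is the $n\to n-1$ reduction; if you want to keep your direct-evaluation route, you must actually execute the signed ray count for general $n$, and the appeal to Gale evenness as written is not yet a proof.
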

\begin{rem}
	It is apparent from Eq.~(\ref{eq: explicit winding number intro}) that the winding number is also independent of $n$.
\end{rem}

A point in $\ampliwb$ also satisfies the coarse boundary conditions Eq.~(\ref{eq: coarse boundary m even}). In \cite[Section 3]{arkani2018unwinding}, the authors also conjectured the inverse implication: a point with the correct winding number and satisfying the coarse boundary condition is in the amplituhedron.  This implication is still open.

\begin{open} Does a point $Y\in\grwcb{k,k+m}$ satisfying the inequalities of Eq.~(\ref{eq: coarse boundary m even}) and satisfying Eq.~(\ref{eq: explicit winding number intro}) belong to $\ampli$?\end{open}

When $m=2$, the following proposition gives a positive answer to this problem. It allows to give an alternative definition of the $m=2$ amplituhedron in terms of the winding number, see Corollary \ref{m=2 def}.

\begin{prop}
	\label{prop: maximality}
	Fix $m=2$. Let $Y\in\grwcb{k,k+2}$ and $\mathcal{Z}\in{\rm Mat}_{n,k+2}^{>0}$. 
	\begin{enumerate}
		\item We have
		      \[
		      	w_{n,k,2}\left(Y,\mathcal{Z}\right)\leq\left\lfloor \frac{k+1}{2}\right\rfloor .
		      \]
		\item If $w_{n,k,2}\left(Y,\mathcal{Z}\right)=\left\lfloor \frac{k+1}{2}\right\rfloor $
		      and satisfies the coarse boundary conditions, see Eq.~(\ref{eq: coarse boundary m even}), then $Y\in\mathcal{A}_{n,k,2}$. 
	\end{enumerate}
\end{prop}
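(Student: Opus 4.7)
For $m=2$ the polyhedron $P(Y,\mathcal{Z})$ is the closed polygon $Z_{1}Z_{2}\cdots Z_{n}Z_{1}$ in $V_{Y}\simeq\mathbb{R}^{2}$, and $w_{n,k,2}(Y,\mathcal{Z})$ is its classical winding number around the origin. My plan is to reduce both parts to sign-variation counts of the sequences $(\phi(\mathcal{Z}_{i}))_{i=1}^{n}$ with $\phi\in Y^{\perp}\subset(\mathbb{R}^{k+2})^{*}$, and in part (2) to invoke the sign-flip characterization of $\mathcal{A}_{n,k,2}$.

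For part (1), fix a generic line $\ell\subset V_{Y}$ through the origin. Its preimage under $\pi_{Y}$ is a hyperplane $H\supset Y$ in $\mathbb{R}^{k+2}$ cut out by some $\phi\in Y^{\perp}$. A polygon edge $Z_{i}Z_{i+1}$ crosses $\ell$ precisely when $\phi(\mathcal{Z}_{i})$ and $\phi(\mathcal{Z}_{i+1})$ differ in sign, so the total number of crossings with $\ell$ equals the cyclic sign variation $\mathrm{var}_{c}\bigl(\phi(\mathcal{Z}_{1}),\dots,\phi(\mathcal{Z}_{n})\bigr)$. A closed curve winding $w$ times around the origin must meet every line through the origin in at least $2|w|$ points, so
\[
2\,\bigl|w_{n,k,2}(Y,\mathcal{Z})\bigr|\;\leq\;\mathrm{var}_{c}\bigl(\phi(\mathcal{Z}_{1}),\dots,\phi(\mathcal{Z}_{n})\bigr)
\]
for every generic $\phi\in Y^{\perp}$. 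The variation-diminishing property of the totally positive matrix $\mathcal{Z}$ (Gantmacher--Krein) bounds the linear sign variation of the sequence by $k+1$. Combined with the fact that cyclic variation is always even and with the $2$-dimensional freedom in choosing $\phi$---which lets one avoid a spurious wrap-around flip in the $k$ even case---this yields $\bigl|w_{n,k,2}(Y,\mathcal{Z})\bigr|\leq\left\lfloor(k+1)/2\right\rfloor$.

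For part (2), assume $w_{n,k,2}(Y,\mathcal{Z})=\left\lfloor(k+1)/2\right\rfloor$ together with the coarse boundary conditions of~\eqref{eq: coarse boundary m even}. The latter force $\langle Y,i,i+1\rangle>0$ for $1\leq i\leq n-1$ and $(-1)^{k+1}\langle Y,n,1\rangle>0$, so every edge of $P(Y,\mathcal{Z})$ turns in the same rotational direction at the origin: the polygon is a \emph{monotone winding} spiral. Under this monotonicity the number of sign flips of $(\langle Y,1,i\rangle)_{i=2}^{n}$---which counts the edges $Z_{i}Z_{i+1}$, $2\leq i\leq n-1$, crossing the line through the origin and $Z_{1}$---is $2w$ up to an endpoint correction fixed by the signs of $\langle Y,1,2\rangle$ and $\langle Y,1,n\rangle=-\langle Y,n,1\rangle$ determined by coarse boundary. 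The maximality $w=\left\lfloor(k+1)/2\right\rfloor$ then pins the sign-flip count to exactly $k$, by a case analysis on the parity of $k$: for $k$ even one has $2w=k$ and no endpoint correction is needed, while for $k$ odd one has $2w=k+1$ and a single endpoint edge contributes, giving $k$ flips again. Invoking the sign-flip characterization of $\mathcal{A}_{n,k,2}$---the ``third definition'' of the amplituhedron mentioned in the introduction---concludes $Y\in\mathcal{A}_{n,k,2}$.

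The main obstacle is the sharpness of the Gantmacher--Krein estimate for $k$ even, where the naive cyclic bound $k+2$ only yields $|w|\leq k/2+1$; closing the gap requires exploiting the $2$-parameter family $Y^{\perp}$ to rule out the extremal cyclic sign pattern. The delicate part of (2) is then the endpoint accounting near the base vertex $Z_{1}$: monotonicity should guarantee that the edges $Z_{n}Z_{1}$ and $Z_{1}Z_{2}$ contribute coherently, but matching the perturbed line through $Z_{1}$ to the coarse-boundary signs requires the parity split above.
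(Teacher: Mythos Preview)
Your overall plan matches the paper's: reduce $w$ to a sign-variation count along a line through the origin, and for part~(2) invoke the sign-flip description of $\mathcal{A}_{n,k,2}$ (the paper cites Theorem~5.1 of \cite{parisi2021m}). The crucial difference is that the paper works throughout with the \emph{specific} line through $Z_1$, whereas you use a generic $\phi\in Y^{\perp}$. This is exactly where your part~(1) gap sits. For a generic $\phi$, the variation bound coming from total positivity of $\mathcal{Z}$ (nonzero vectors in the positive column space have at most $k+1$ sign changes---note $\mathcal{Z}$ only has positive \emph{maximal} minors, so the classical Gantmacher--Krein inequality $\mathrm{var}(\mathcal{Z}\phi)\leq\mathrm{var}(\phi)$ is not quite the right citation) yields only $\mathrm{var}_c\leq k+2$ when $k$ is even, hence $|w|\leq k/2+1$; your ``exploit the $2$-parameter family $Y^{\perp}$'' is a hope, not an argument. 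The paper instead takes $Z_*$ near $Z_1$, so the relevant sequence is $(\langle Y,1,i\rangle)_{i\in[n]}$, and bounds its sign-flip count $s$ by $k$ using the $\mathcal{Z}$-equations (Remark~\ref{rem: max sign flips}, which holds for every $Y\in\gr{k,k+m}$). Since the total number of line--polygon crossings is even, it equals $s$ or $s+1$ according to the parity of $s$, and one gets $2|w|\leq s+[s\text{ odd}]\leq k+[k\text{ odd}]$ uniformly.

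Your part~(2) also has a slip: the ``monotone winding'' claim is false for $k$ even, because the coarse boundary condition reads $(-1)^{k+1}\langle Y,n,1\rangle>0$, so for $k$ even the closing edge $Z_nZ_1$ turns in the \emph{opposite} rotational sense from the others. The paper sidesteps this entirely: once part~(1) is proved with the specific line through $Z_1$, the chain of inequalities $2w\leq s+[s\text{ odd}]$ and $s\leq k$ is already in hand, so the hypothesis $w=\lfloor(k+1)/2\rfloor$ forces $s=k$ by squeezing, and the sign-flip characterization finishes immediately---no monotonicity or endpoint bookkeeping required. Since you end up working with the sequence $(\langle Y,1,i\rangle)_i$ in part~(2) anyway, the cleanest repair is to commit to that specific line from the outset in part~(1) and replace the Gantmacher--Krein appeal by the sharper bound $s\leq k$.
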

\begin{cor}
	\label{m=2 def}
	Let $Y\in\grwcb{k,k+2}$ and $\mathcal{Z}\in{\rm Mat}_{n,k+2}^{>0}$.
	The point $Y$ is in $\mathcal{A}_{n,k,2}$ if and only if $w_{n,k,2}\left(Y,\mathcal{Z}\right)=\left\lfloor \frac{k+1}{2}\right\rfloor $
	and $Y$ satisfies the coarse boundary conditions, see  Eq.~(\ref{eq: coarse boundary m even}). 
\end{cor}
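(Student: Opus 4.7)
The plan is to derive this corollary directly by assembling Theorem~\ref{thm: winding constant} and Proposition~\ref{prop: maximality}, both of which have already been established earlier in the paper. The bridge is the definitional identity $\mathcal{A}_{n,k,2}^{{\rm wcb}} = \mathcal{A}_{n,k,2}\setminus\mathcal{L}$, together with $\grwcb{k,k+2}=\gr{k,k+2}\setminus\mathcal{L}$; consequently a point $Y\in\grwcb{k,k+2}$ lies in $\mathcal{A}_{n,k,2}$ if and only if it lies in $\mathcal{A}_{n,k,2}^{{\rm wcb}}$. This trivial observation is what lets me pass back and forth between the hypotheses of the corollary and those of Theorem~\ref{thm: winding constant}.

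For the forward direction, I would assume $Y\in\mathcal{A}_{n,k,2}$. Combined with the standing hypothesis $Y\in\grwcb{k,k+2}$ this gives $Y\in\mathcal{A}_{n,k,2}^{{\rm wcb}}$, so Theorem~\ref{thm: winding constant} applies. Specialising Eq.~(\ref{eq: explicit winding number intro}) to $m=2$ yields
\[
	w_{n,k,2}\!\left(Y,\mathcal{Z}\right)=\binom{\left\lfloor (k+1)/2\right\rfloor }{1}=\left\lfloor \tfrac{k+1}{2}\right\rfloor ,
\]
the required winding value. The coarse boundary inequalities of Eq.~(\ref{eq: coarse boundary m even}) hold for every point of the amplituhedron by the Cauchy--Binet argument encoded in Lemma~\ref{lem: Consequence Cauchy Binet}, so both requirements in the right-hand side of the corollary are met.

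For the reverse direction, the hypotheses are exactly those of Proposition~\ref{prop: maximality}(2), which immediately yields $Y\in\mathcal{A}_{n,k,2}$. I do not foresee any genuine obstacle here: all the substantive work has already been carried out, with Theorem~\ref{thm: winding constant} pinning down the winding number on $\mathcal{A}_{n,k,2}^{{\rm wcb}}$, and Proposition~\ref{prop: maximality}(2) furnishing the converse when $m=2$. The only step requiring any care is the definitional reduction noted above, which connects the ``wcb'' superscript appearing inside Theorem~\ref{thm: winding constant} to the coarse-boundary-free hypothesis $Y\in\grwcb{k,k+2}$ of the corollary; modulo that bookkeeping, the corollary is a formal consequence of the two results it cites.
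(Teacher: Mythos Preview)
Your proposal is correct and matches the paper's intended argument: the corollary is presented in the paper without an explicit proof, as an immediate consequence of Theorem~\ref{thm: winding constant} for the forward direction and Proposition~\ref{prop: maximality}(2) for the converse. Your bookkeeping about $\mathcal{A}_{n,k,2}^{{\rm wcb}}=\mathcal{A}_{n,k,2}\cap\grwcb{k,k+2}$ is exactly the trivial bridge needed, and the specialisation of Eq.~(\ref{eq: explicit winding number intro}) to $m=2$ is done correctly.
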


\subsection{The crossing number\label{subsec: the crossing number}}

Let $m$ be an odd positive integer, we write $m=2r-1$ with $r$
a positive integer. Fix $k,n$ such that $n\geq k+m$. Fix also $\mathcal{Z}\in{\rm Mat}_{n,k+m}^{>0}$, and denote by $\mathcal{Z}_{1},\dots,\mathcal{Z}_{n}$ its $n$ rows.

We associate a number, the crossing number, to any point $Y\in\gr{k,k+m}$
in the following way. Start from a $k$-plane $Y$ in $\mathbb{R}^{k+m}$
and the $n$ vectors $\mathcal{Z}_{1},\dots,\mathcal{Z}_{n}$ in $\mathbb{R}^{k+m}$.
Let $\left(i_{1},i_{1}+1,\dots,i_r,i_{r}+1\right)$ be strictly ascending  list of $m+1$ positive integers between $1$ and $n$. The list is composed of $r$ pairs of consecutive integers. We decompose
the simplex $S\left(i_{1},\dots,i_{r}+1\right)$ into cells:
\begin{itemize}
	\item the \emph{$0$-cells} are the vertices $Z_{i_{1}},Z_{i_{1}+1}\dots,Z_{i_{r}}$
	      and $Z_{i_{r}+1}$ of $S\left(i_{1},\dots,i_{r}+1\right)$, 
	\item let $1\leq a\leq2r-1$. For any choice of $\left(a+1\right)$ affinely
	      independent vertices of $S\left(i_{1},\dots,i_{r}+1\right)$, say
	      $Z_{j_{1}},\dots,Z_{j_{a+1}}$ where $\left\{ j_{1},\dots,j_{a+1}\right\} \subset\left\{ i_{1},i_{1}+1,i_{2}+1,\dots,i_{r}+1\right\} $,
	      we define the \emph{$a$-cell} $\ensuremath{\mathfrak{C}}\left(j_{1},\dots,j_{a+1}\right)$
	      to be the relative interior of the convex hull of $\pi_{Y}\left(\mathcal{Z}_{j_{1}}\right),\dots,\pi_{Y}\left(\mathcal{Z}_{j_{a+1}}\right)$. 
\end{itemize}
Denote by $\mathfrak{Cells}$ the set of all the cells of the simplices
$S\left(i_{1},\dots,i_{r}+1\right)$ for any list \\ 
$\left(i_{1},i_{1}+1,i_{2},\dots,i_{r}+1\right)$
of strictly ascending integers between $1$ and $n$. 
\begin{defn}
	\label{def: crossing number}The \emph{crossing number 
		\[
			c_{n,k,m}\left(Y,\mathcal{Z}\right)
		\]
		} is the number of cells in $\mathfrak{Cells}$ containing the origin
	$\pi_{Y}\left(Y\right)$ of $V_{Y}$. 
\end{defn}
\begin{rem}
	\label{rem: crossing as altenating signs}Suppose the origin $\pi_{Y}\left(Y\right)$ of $V_{Y}$ is contained only in $m$-dimensional cells (i.e. in the interior of the simplices). Then the crossing number counts the number of simplices of type $S\left(i_{1},i_{1}+1,i_{2},\dots,i_{r}+1\right)$ containing the origin $V_{Y}$. However, the sign of $\left\langle Y,I\backslash\left\{ i\right\} \right\rangle $ determines the relative position of the origin with respect to the hyperplane containing the affine hull of $\left\{ Z_{j},j\in I\backslash\left\{ i\right\} \right\} $. Thus, in this particular case, the origin belongs to the simplex $S\left(I\right)$ if and only if the sequence
	\[
		\left({\rm sign}\left\langle Y,I\backslash\left\{ i\right\} \right\rangle \right)_{i\in I}
	\]
	is alternating. Thus, Definition~\ref{def: crossing number} of
	the crossing number agrees with the definition of \cite[Section 4]{arkani2018unwinding}.
	Otherwise, the two definitions differ and Theorem~\ref{thm: crossing is constant}
	only holds using Definition~\ref{def: crossing number}. 
\end{rem}
\begin{thm}
	\label{thm: crossing is constant}The crossing number is independent
	of $Y\in\mathcal{A}_{n,k,m}^{>0}$ and of $\mathcal{Z}\in{\rm Mat}_{n,k+m}^{>0}$, and it equals
	\begin{equation}
		c_{n,k,m}\left(Y,\mathcal{Z}\right)=\begin{cases}
		\frac{2k+m-1}{m+1}{\frac{k+m-2}{2} \choose \frac{m-1}{2}} & {\rm for\;}k\;{\rm odd},\\
		2{\frac{k+m-1}{2} \choose \frac{m+1}{2}} & {\rm for\;}k\;{\rm even.}
		\end{cases}\label{eq: explicit crossing number}
	\end{equation}
\end{thm}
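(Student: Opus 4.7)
The plan is to establish two things: (i) the crossing number $c_{n,k,m}(Y,\mathcal{Z})$ is locally constant on $\amplio \times \matp$, and (ii) its value at one convenient basepoint matches Eq.~(\ref{eq: explicit crossing number}). Since $\grsp{k,n}$ is path-connected and $\amplio$ is its continuous image under $\tilde{\mathcal{Z}}$, and $\matp$ is convex, the parameter space $\amplio \times \matp$ is path-connected; hence local constancy suffices for global constancy.

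For local constancy, I would first pass to the dense open subset $U \subset \amplio \times \matp$ of ``generic'' configurations, where the origin of $V_{Y}$ avoids every lower-dimensional cell of $\mathfrak{Cells}$. On $U$, by Remark~\ref{rem: crossing as altenating signs}, $c_{n,k,m}$ coincides with the count of top-dimensional simplices $S(i_{1},i_{1}+1,\ldots,i_{r},i_{r}+1)$ containing the origin, equivalently the number of admissible indexings $I$ for which the sequence $(\mathrm{sign}\langle Y, I\setminus\{i\}\rangle)_{i \in I}$ is alternating. Along a generic path in $\amplio \times \matp$, this count can only change when the origin crosses a cell boundary. The key invariance observation is that $\mathfrak{Cells}$ is a \emph{set} of geometric cells, so each cell is counted only once regardless of how many simplices share it: at an \emph{internal} facet crossing between $S(I)$ and $S(I')$, the loss of $S(I)$'s interior is offset by the gain of $S(I')$'s interior, while the shared facet is counted just once in $\mathfrak{Cells}$ at the instant of crossing. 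The delicate case is when the origin would exit a simplex through a facet that is \emph{not} shared with another admissible simplex; I would argue such crossings are forbidden within $\amplio$ by combining the strict coarse boundary conditions of Eq.~(\ref{eq: coarse boundary m odd}) with the $C$- and $\mathcal{Z}$-equations advertised in the introduction, which algebraically relate the twistor coordinates and constrain their admissible sign evolutions.

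For the explicit value, I would evaluate $c_{n,k,m}$ at a specific convenient basepoint—for instance, $\mathcal{Z}$ of moment-curve type and $Y = \tilde{\mathcal{Z}}(C)$ for a well-chosen $C \in \grsp{k,n}$—and enumerate the index sets $I=(i_{1},i_{1}+1,\ldots,i_{r},i_{r}+1)$ whose associated twistor-coordinate signs alternate. The count should split cleanly according to the parity of $k$ and collapse, via a standard binomial identity, to the two cases of Eq.~(\ref{eq: explicit crossing number}).

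The main obstacle is ruling out the ``unpaired'' facet crossings during the invariance argument: this is a genuinely global statement and is the step where the structural properties of the open amplituhedron—beyond mere connectedness—must enter, presumably through the algebraic identities among twistor coordinates alluded to in the abstract. The explicit combinatorial computation at the basepoint is expected to be routine but requires careful bookkeeping in the two $k$-parities.
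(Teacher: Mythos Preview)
Your overall strategy matches the paper's---prove constancy on $\amplio\times\matp$ by a path argument, then compute at a basepoint---but there is a genuine gap in the invariance step, and a missing reduction.

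\textbf{The invariance argument misidentifies the main obstacle.} You claim that at a facet shared by two simplices $S(I)$ and $S(I')$, ``the loss of $S(I)$'s interior is offset by the gain of $S(I')$'s interior,'' and that the delicate case is when the facet is \emph{not} shared. In fact every internal $(m-1)$-cell $\mathfrak{C}(J)$ is automatically shared by exactly two simplices $S(J,i)$ and $S(J,\bar{i})$ (this is Lemma~\ref{lem: conjugate vertex} in the paper); the unshared facets are precisely the boundary cells, and those are excluded by the strict coarse boundary conditions alone. The real obstacle is the \emph{shared} case: your offset argument only works if $Z_{i}$ and $Z_{\bar{i}}$ lie on \emph{opposite} sides of the hyperplane through $\mathfrak{C}(J)$. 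If they lie on the same side (the configuration of Figure~\ref{fig:sub1}), then as the origin crosses the facet the count jumps by $\pm2$, not $0$. Ruling this out is the content of the Main Lemma~\ref{lem: main lemma}, and it is precisely here---not for the unpaired facets---that the $C$- and $\mathcal{Z}$-equations enter, via the sign constraints of Proposition~\ref{cor: forbidden vanishings}. Your proposal does not address this, and the ``set of cells'' observation does not help: the issue is not overcounting the shared facet but the direction in which the two neighboring top cells lie.

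\textbf{The $n$-independence step is missing.} Your basepoint computation is left at ``moment-curve type $\mathcal{Z}$ and well-chosen $C$,'' but the enumeration only becomes tractable once one reduces to $n=k+m$, where each twistor coordinate is a single determinant and the alternating-sign condition collapses to the parity constraint $i_{1}\equiv\cdots\equiv i_{r}\pmod 2$. The paper inserts an intermediate step (Subsection~\ref{subsec:Independence n crossing}) proving $c_{n+1,k,m}=c_{n,k,m}$ for compatible choices of $(Y,\mathcal{Z})$ before specializing to $n=k+m$; without this reduction, the combinatorics you label ``routine'' is not.
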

\begin{rem}
	It is apparent from Eq.~(\ref{eq: explicit crossing number})
	that the crossing number is independent of $n$.
\end{rem}
\begin{rem}
	From the explicit formulas of the winding number Eq.~(\ref{eq: explicit winding number intro})
	and of the crossing number Eq.~(\ref{eq: explicit crossing number}),
	we obtain the following relation between the crossing and the winding
	numbers 
	\[
		c_{n,k,m}\left(Y,\mathcal{Z}\right)=\begin{cases}
		2w_{n,k,m+1}\left(Y,\mathcal{Z}\right)-w_{n,k,m-1}\left(Y,\mathcal{Z}\right) & {\rm for\;}k\;{\rm odd},\\
		2w_{n,k,m+1}\left(Y,\mathcal{Z}\right) & {\rm for\;}k\;{\rm even},
		\end{cases}
	\]
	for $\left(Y,\mathcal{Z}\right) \in \amplio \times \matp$. This relation can be directly deduced by geometrical reasons. See \cite[Section 4]{arkani2018unwinding}
	for an idea of the argument. 
\end{rem}
A point in $\amplio$ also satisfies the coarse boundary conditions Eq.~(\ref{eq: coarse boundary m odd}). In \cite[Section 4]{arkani2018unwinding}, the authors also conjectured the inverse implication: a point with the correct crossing number and satisfying the coarse boundary condition is in the amplituhedron.  This implication is still open.

\begin{open} Does a point $Y\in\gr{k,k+m}$ satisfying Eq.~(\ref{eq: coarse boundary m odd})
	and Eq.~(\ref{eq: explicit crossing number}) belong to $\ampli$?
\end{open}

\subsection{Organization of the paper}

In Section~\ref{sec: Winding}, we prove Theorem~\ref{thm: winding constant}.
We first prove that the winding number $w_{n,k,m}\left(Y,\mathcal{Z}\right)$
in constant when $\left(Y,\mathcal{Z}\right)\in\ampliwb\times\matp$.
Then we prove that the winding number is also independent of $n$.
Finally, we obtain the explicit expression of the winding number for
$n=k+m$. In the last subsection, we prove Proposition \ref{prop: maximality} establishing the equivalence between the winding description and the original description of the amplituhedron when $m=2$. 

In section~\ref{sec: crossing number}, we prove Theorem~\ref{thm: crossing is constant}.
The proof follows the same path. We first prove that the crossing
number $c_{n,k,m}\left(Y,\mathcal{Z}\right)$ is constant when $\left(Y,\mathcal{Z}\right)\in\amplio\times\matp$.
In this case, this part is more subtle. It requires to first prove
two sets of equations on twistor coordinates that we call $C$-equations
and $\mathcal{Z}$-equations. We emphasize that these equations are
valid for every $n,k$ and $m$. We deduce from these equations that
we can avoid unpleasant behavior of the simplices and cells involved
in the count of the crossing number, and then prove the constancy
of the crossing number in $\amplio\times\matp$. Then we prove the
independence of the crossing number in $n$, and finally we obtain
the explicit expression of the crossing for $n=k+m$.

The two sections are independent.

\subsection{Acknowledgments}

We are grateful to Ran Tessler for valuable discussions and comments.
X. B. was supported by the ISF grant 335/19 in the group of Ran Tessler. 
J.L. was supported by the Austrian Science Fund (FWF): P 34602 individual project. 

\section{The winding number\label{sec: Winding}}

In this section $m$ will denote an even integer. 

\subsection{Preliminaries}

Throughout the text, we use the Cauchy-Binet formula to develop twistor
coordinates. A formulation of this development is given by Lemma 3.3
of \cite{parisi2021m}. Let us recall the statement.
\begin{lem}
	[\cite{parisi2021m}]\label{lem: Consequence Cauchy Binet}Let
	$\mathcal{Z}\in\matp$. Write $Y\in{\rm Gr}_{k,k+m}$ as $Y=C\mathcal{Z}$
	with $C\in{\rm Gr}_{k,n}$. We have 
	\[
		\left\langle C\mathcal{Z},i_{1},\dots,i_{m}\right\rangle =\sum_{J=\left(j_{1}<\cdots<j_{k}\right)\in\sub k}p_{J}\left(C\right)\left\langle j_{1},\dots,j_{k},i_{1},\dots,i_{m}\right\rangle ,
	\]
	where we used Notation~\ref{Notation: twistor coordinates}.
\end{lem}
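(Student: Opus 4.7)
The plan is to interpret the twistor coordinate as a $(k+m)\times(k+m)$ determinant and expand it by multilinearity in its first $k$ rows; this is the content of Cauchy--Binet applied to the product $M\mathcal{Z}$, where $M$ is the $(k+m)\times n$ matrix obtained by stacking $C$ on top of the selector rows $e_{i_1}^T, \dots, e_{i_m}^T$.

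Concretely, I would first write $\langle C\mathcal{Z}, i_1, \dots, i_m\rangle$ as the determinant of the $(k+m)\times(k+m)$ matrix with rows $(C\mathcal{Z})_1, \dots, (C\mathcal{Z})_k, \mathcal{Z}_{i_1}, \dots, \mathcal{Z}_{i_m}$. Since each row $(C\mathcal{Z})_l = \sum_{j=1}^n C_{lj}\mathcal{Z}_j$ is a linear combination of the rows of $\mathcal{Z}$, expanding multilinearly in rows $1, \dots, k$ yields
$$\langle C\mathcal{Z}, i_1, \dots, i_m\rangle = \sum_{(j_1,\dots,j_k) \in [n]^k} C_{1 j_1}\cdots C_{k j_k}\, \det(\mathcal{Z}_{j_1}, \dots, \mathcal{Z}_{j_k}, \mathcal{Z}_{i_1}, \dots, \mathcal{Z}_{i_m}).$$

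Next I would discard the terms in which the $j_a$ are not pairwise distinct (a repeated row kills the determinant), group the surviving terms by the underlying set $J = \{j_1 < \cdots < j_k\} \in \sub{k}$, and sum over the $k!$ permutations $\sigma \in S_k$. The alternating property of the determinant pulls out a $\mathrm{sign}(\sigma)$, and the inner sum factors as $p_J(C)\cdot \langle j_1, \dots, j_k, i_1, \dots, i_m\rangle$. Finally, any $J$ meeting $\{i_1, \dots, i_m\}$ contributes zero (the associated $(k+m)\times(k+m)$ determinant has a repeated row), so the sum over $J \in \sub{k}$ is unrestricted as claimed.

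The computation is essentially bookkeeping and presents no real obstacle; the only thing to keep in mind is that $p_J(C)$ and $\langle j_1, \dots, j_k, i_1, \dots, i_m\rangle$ are both defined with respect to the same sorted order $j_1 < \cdots < j_k$, so the two $\mathrm{sign}(\sigma)$ factors coming from permuting the columns of $C$ and the rows of $\mathcal{Z}$ match automatically.
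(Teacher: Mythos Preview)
Your proof is correct and follows essentially the same approach as the paper: both rewrite the twistor coordinate as $\det\!\bigl(M\mathcal{Z}\bigr)$ with $M$ the $(k+m)\times n$ matrix obtained by stacking $C$ on top of the selector rows $e_{i_1}^T,\dots,e_{i_m}^T$, and then expand via Cauchy--Binet. The paper simply invokes Cauchy--Binet by name, whereas you spell out the multilinear expansion and the grouping over permutations explicitly; your final remark about $J$ meeting $\{i_1,\dots,i_m\}$ is true but unnecessary, since the sum over all $J\in\binom{[n]}{k}$ is already what the multilinear expansion produces.
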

For completeness, we include a proof. 
\begin{proof}
	We have
	\[
		\left\langle C\mathcal{Z},i_{1},\dots,i_{m}\right\rangle =\det\left(\left(\begin{array}{c}
		C\\
		I_{i_{1},\dots,i_{m}}
		\end{array}\right)\mathcal{Z}\right),
	\]
	where $I_{i_{1},\dots,i_{m}}$ is the $m\times n$ matrix such that
	the $l$th row has a $1$ in position $i_{l}$ and zeros elsewhere.
	We then use the Cauchy-Binet formula to obtain the result. 
\end{proof}
In particular, (strict) inequalities of the coarse boundary conditions,
Eq.~(\ref{eq: coarse boundary m even}) and Eq.~(\ref{eq: coarse boundary m odd}),
follow from the positivity of $\mathcal{Z}\in\matp$ and the (strict)
positivity of $C\in\grp{k,n}$.

\subsection{Constancy of the winding number in $\ampliwb \times \matp$}
\begin{prop}
	\label{prop:winding_independent_CZ}
	Fix $n,k$ and $m$ even such that $n\geq k+m$. The winding number
	$w_{n,k,m}:\ampliwb\times\matp\rightarrow\mathbb{Z}$ is constant.
\end{prop}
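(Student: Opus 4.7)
The plan is to combine the homotopy invariance of the topological degree with the path-connectedness of the parameter space $\ampliwb\times\matp$.

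\textbf{Local constancy.} Fix $(Y_0,\mathcal Z_0)\in\ampliwb\times\matp$ and choose a linear complement $W$ of $Y_0$ in $\mathbb R^{k+m}$. For $(Y,\mathcal Z)$ in a neighbourhood of $(Y_0,\mathcal Z_0)$, $W$ remains a complement of $Y$, so the quotients $V_Y$ are trivialised coherently as $W\simeq\mathbb R^m$, and the projected vertices $Z_i=\pi_Y(\mathcal Z_i)$, the polyhedron $P(Y,\mathcal Z)$, and the radial map $f$ all depend continuously on $(Y,\mathcal Z)$. The hypothesis $Y\in\ampliwb$ says precisely that every coarse-boundary twistor coordinate $\langle Y,I\rangle$ or $(-1)^{k+1}\langle Y,I,n,1\rangle$ is nonzero; by Remark~\ref{rem: det in projected space and twistors} this is equivalent to the $m$ vertices of each simplex of $P$ forming a basis of $V_Y\simeq\mathbb R^m$, so the affine hull of any such vertex set misses the origin and in particular $0\notin P(Y,\mathcal Z)$. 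Nonvanishing is an open condition, so this persists throughout the neighbourhood and $f$ varies through a homotopy of maps avoiding the origin. Homotopy invariance of the topological degree then gives that $w_{n,k,m}$ is locally constant.

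\textbf{Path-connectedness.} The factor $\matp$ is convex, hence connected, so it suffices to connect any two points of $\ampliwb$. The subset $\amplio=\tilde{\mathcal Z}(\grsp{k,n})$ is path-connected, being the continuous image of the connected strictly positive Grassmannian, and it is contained in $\ampliwb$. Given an arbitrary $Y=C\mathcal Z\in\ampliwb$ with $C\in\grp{k,n}$, I fix some $C'\in\grsp{k,n}$ and choose a continuous path $C_t$ from $C$ to $C'$ inside $\grp{k,n}$, which exists because $\grp{k,n}$ is path-connected (in fact homeomorphic to a closed ball with interior $\grsp{k,n}$). The preimage $\tilde{\mathcal Z}^{-1}(\mathcal L)$ is the zero set of the coarse-boundary twistor coordinates expressed as functions of the Plücker coordinates of $C$; by Lemma~\ref{lem: Consequence Cauchy Binet} each such function is a nonnegative combination of $\mathcal Z$-minors weighted by the $p_J(C)$, and is not identically zero on $\grp{k,n}$ because it is strictly positive at any element of $\grsp{k,n}$. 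Therefore $\tilde{\mathcal Z}^{-1}(\mathcal L)$ has codimension at least one in $\grp{k,n}$, and a small generic perturbation of $C_t$ avoids it. The image is then a path in $\ampliwb$ from $Y$ into $\amplio$, giving path-connectedness.

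\textbf{Main obstacle.} The delicate point is this final perturbation: one must argue that the bad subset $\tilde{\mathcal Z}^{-1}(\mathcal L)$ is a proper semi-algebraic subset of $\grp{k,n}$ (so that generic paths in the ambient ball avoid it) and that the perturbation can be performed without leaving $\grp{k,n}$. Once this transversality is in hand, combining local constancy with the path-connectedness of $\ampliwb\times\matp$ forces the integer-valued function $w_{n,k,m}$ to be constant, as claimed.
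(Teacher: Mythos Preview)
Your strategy---local constancy of the degree plus path-connectedness of the parameter space---is exactly the paper's, and your local-constancy step is fine. The difference lies in how the path is built, and here you have made life harder than necessary. The ``main obstacle'' you flag is not an obstacle at all: since $\grp{k,n}$ is a closed ball with interior $\grsp{k,n}$ (a fact you already cite), any path from $C\in\grp{k,n}$ to $C'\in\grsp{k,n}$ can be chosen to lie in $\grsp{k,n}$ for all $t>0$, and then $C_t\mathcal Z\in\amplio\subset\ampliwb$ automatically, with no perturbation or transversality argument required. This is precisely what the paper does: it takes a path $(\breve C,\breve{\mathcal Z})$ in $\grp{k,n}\times\matp$ with $\breve C(t)\in\grsp{k,n}$ for $t\in(0,1)$, so the image path stays in $\ampliwb$ throughout. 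A related minor point: you argue connectedness factor by factor, but $\ampliwb$ depends on $\mathcal Z$, so the domain is not literally a product; the paper sidesteps this by working upstairs in the genuine product $\grp{k,n}\times\matp$ and moving $C$ and $\mathcal Z$ simultaneously. With these simplifications your argument and the paper's coincide.
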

\begin{rem}
	\label{rem:constancy-winding-Gr}
	We can prove in a similar way that the winding number is constant on each path-connected component of $\grwcb{k,k+m}\times\matp$. 
\end{rem}
\begin{proof}
	Let $\mathcal{Z},\mathcal{Z}^{'}\in\matp$ and $Y,Y^{'}\in\ampliwb$.
	Let $C,C^{'}\in\grp{k,n}$ such that
	\[
		Y=C\mathcal{Z}\quad{\rm and}\quad Y^{'}=C^{'}\mathcal{Z}^{'}.
	\]
	Since $\grsp{k,n}$, $\matp$ are path connected (see \cite{postnikov2006total} for the positive Grassmannian) and $\grp{k,n}=\overline{{\rm Gr}_{k,n}^{>0}}$, there exists a path
	\[
		\left(\breve{C},\breve{\mathcal{Z}}\right):\left[0,1\right]\rightarrow\grp{k,n}\times\matp
	\]
	such that $\left(\breve{C}\times\breve{\mathcal{Z}}\right)\left(0\right)=\left(C,\mathcal{Z} \right)$,
	$\left(\breve{C}\times\breve{\mathcal{Z}}\right)\left(1\right)=\left(C^{'},\mathcal{Z}^{'}\right)$
	and such that $\breve{C}\left(t\right)\in\grsp{k,n}$ for $t\in\left]0,1\right[$.
	The winding number 
	\[
		w_{n,k,m}\left(\breve{Y}\left(t\right),\breve{\mathcal{Z}}\left(t\right)\right),
	\]
	where $\breve{Y}\left(t\right):=\breve{C}\breve{\mathcal{Z}}$, is
	independent of $t$. Indeed, the winding number may change only as
	a result of an intersection between the origin $\pi_{\breve{Y}\left(t\right)}\left(\breve{Y}\left(t\right)\right)$
	of $V_{Y}$ and the polytope $P\left(\breve{Y}\left(t\right),\breve{\mathcal{Z}}\left(t\right)\right)$.
	However, for every $t\in\left[0,1\right]$ we have $\breve{Y}\left(t\right)\in\ampliwb$, hence $\breve{Y}\left(t\right)$ satisfies the strict inequality of the coarse boundary condition Eq.~(\ref{eq: coarse boundary m even}), thus the origin $\pi_{\breve{Y}\left(t\right)}\left(\breve{Y}\left(t\right)\right)$ of $V_{\breve{Y}\left(t\right)}$ does not hit the polytope $P\left(\breve{Y}\left(t\right),\breve{\mathcal{Z}}\left(t\right)\right)$.
\end{proof}

\subsection{Independence of the winding number in $n$}

We show in this section that for a specific choice of points in $\ampliwb$
and $\matp$, the winding number does not depend on $n$. Since, we
proved that the winding number $w_{n,k,m}\left(Y,\mathcal{Z}\right)$
is independent of $Y\in\ampliwb$ and $\mathcal{Z}\in\matp$, we deduce
that the winding number is also independent of $n$. 
\begin{prop}
	\label{prop:winding_independent_n}
	There exist $\left(C^{'},\mathcal{Z}^{'}\right)\in {\rm Gr}_{k,n+1}^{>0} \times {\rm Mat}_{n+1,k+m}^{>0}$
	and $\left(C,\mathcal{Z}\right) \in \grsp{k,n}\times\matp$ such that
	\begin{equation}
		w_{n,k,m}\left(C\mathcal{Z},\mathcal{Z}\right)=w_{n+1,k,m}\left(C^{'}\mathcal{Z}^{'},\mathcal{Z}^{'}\right).\label{eq: winding indep n}
	\end{equation}
\end{prop}
\begin{proof}
	In Step~$1$, we introduce $\left(C^{'},\mathcal{Z}^{'}\right)$
	and $\left(C,\mathcal{Z}\right)$. Then, in Step~$2$ we view the
	polytopes $P\left(C\mathcal{Z},\mathcal{Z}\right)$ and $P\left(C^{'}\mathcal{Z}^{'},\mathcal{Z}^{'}\right)$
	associated to these points as singular chains. We prove that the difference
	$Q=P\left(C^{'}\mathcal{Z}^{'},\mathcal{Z}^{'}\right)-P\left(C\mathcal{Z},\mathcal{Z}\right)$
	is given by a sum of boundaries of singular $m$-simplices. Finally
	in Step~$3$, we use this formula to prove that the winding of $Q$
	is zero. This implies that the winding of $P\left(C^{'}\mathcal{Z}^{'},\mathcal{Z}^{'}\right)$
	is equal to the winding of $P\left(C\mathcal{Z},\mathcal{Z}\right)$. 
		
	\paragraph{Step~$1$.}
		
	Choose $\mathcal{Z}^{'}=\left(\mathcal{Z}_{1},\dots,\mathcal{Z}_{n+1}\right)\in{\rm Mat}_{n+1,k+m}^{>0}$
	 and then define $\mathcal{Z}=\left(\mathcal{Z}_{1},\dots,\mathcal{Z}_{n}\right)\in\matp$.
	Let $C\in\grsp{k,n}$ and let $Y=C\mathcal{Z}\in\amplio$. We define
	the matrix $C^{'}$ by adding a $\left(n+1\right)$th column of zeros
	to $C$. It follows from Lemma~\ref{lem: Consequence Cauchy Binet}
	together with the positivity of $\mathcal{Z}^{'}$ and $C$ that $Y=C^{'}\mathcal{Z}^{'}$
	does not belong to the coarse boundary of the amplituhedron: 
	\begin{equation}
		\left\langle C^{'}\mathcal{Z}^{'},I\right\rangle \neq0\label{eq: non vanishing boundary twistor}
	\end{equation}
	for any subset $I=\left(i_{1},i_{1}+1,\dots,i_{\frac{m}{2}},i_{\frac{m}{2}}+1\right)$
	or $I=\left(i_{1},i_{1}+1,\dots,i_{\frac{m}{2}-1},i_{\frac{m}{2}-1}+1,n+1,1\right)$
	in ${\left[n+1\right] \choose m}$. Hence the winding number $w_{n+1,k,m}\left(Y,\left(\mathcal{Z}_{1},\dots,\mathcal{Z}_{n+1}\right)\right)$
	is well-defined.
		
	\paragraph{Step $2$. }
		
	Let $P\left(Y,\mathcal{Z}\right)$ and $P\left(Y,\mathcal{Z}^{'}\right)$
	be the polytopes associated to $\mathcal{Z}$ and $\mathcal{Z}^{'}$
	in $V_{Y}$. We consider $P\left(Y,\mathcal{Z}\right)$ as a singular
	chain given by the sum of the singular simplices
	\[
		\Delta^{m-1}\rightarrow S\left(i_{1},i_{1}+1,\dots ,i_{\frac{m}{2}}+1\right)\quad{\rm and}\quad\Delta^{m-1}\rightarrow S\left(i_{1},i_{1}+1, \dots,i_{\frac{m}{2}-1}+1,n,1\right)
	\]
	where $\Delta^{m-1}$ is the standard $\left(m-1\right)$-simplex
	$\left(e_{1},\dots,e_{m}\right)$ in $\mathbb{R}^{m}$ and the first
	(resp. second) map is the linear map sending the basis $\left(e_{1},\dots,e_{m}\right)$
	of $\mathbb{R}^{m}$ to $\left(Z_{i_{1}},Z_{i_{1}+1},\dots,Z_{i_{\frac{m}{2}}+1}\right)$
	(resp. $\left(Z_{i_{1}},Z_{i_{1}+1},\dots,Z_{n},Z_{1}\right)$). Similarly,
	we consider $P\left(Y,\mathcal{Z}^{'}\right)$ as a singular chain.
	We have 
	\[
		\partial P\left(Y,\mathcal{Z}\right)=\partial P\left(Y,\mathcal{Z}^{'}\right)=0.
	\]
	Write $Q:=P\left(Y,\mathcal{Z}^{'}\right)-P\left(Y,\mathcal{Z}\right)$.
	It is then a closed chain and does not intersect the origin, so its
	winding number is well defined. Moreover, the winding number associated
	to $P\left(Y,\mathcal{Z}^{'}\right)$ is equal to the winding number
	of $P\left(Y,\mathcal{Z}\right)$ plus the winding number of $Q$.
	We will prove that the winding number of $Q$ is zero, hence proving
	Eq.~(\ref{eq: winding indep n}).
		
	We first prove that 
	\begin{equation}
		Q=\sum\partial\sigma_{i_{1},i_{1}+1,\dots,i_{\frac{m}{2}-1},i_{\frac{m}{2}-1}+1,n,n+1,1},\label{eq: the chain Q}
	\end{equation} 
	where the summation is over strictly ascending lists of integers $\left(i_{1},i_{1}+1,\dots,i_{\frac{m}{2}-1},i_{\frac{m}{2}-1}+1\right)$ between $2$ and $n-1$. We used the notation $\sigma\left(I\right)$ to denote the singular simplex $\Delta^{\left|I\right|-1}\rightarrow S\left(I\right)$. 
		
	The singular simplices $\sigma_{i_{1},i_{1}+1,\ldots,i_{\frac{m}{2}-1},i_{\frac{m}{2}-1}+1,n,n+1}$ and $\sigma_{i_{1},i_{1}+1,\ldots,i_{\frac{m}{2}-1},i_{\frac{m}{2}-1}+1,n+1,1}$ in \\
	$\partial\sigma_{i_{1},i_{1}+1,\dots,i_{\frac{m}{2}-1},i_{\frac{m}{2}-1}+1,n,n+1,1}$ come with a positive sign. These simplices belong to the set of simplices
	of $P\left(Y,\mathcal{Z}^{'}\right)$ not in $P\left(Y,\mathcal{Z}\right)$. Since $i_1 \geq 2$ and $i_{\frac{m}{2}+1}\leq n-1$ they do not constitute the whole set of simplices in $P\left(Y,\mathcal{Z}^{'}\right)$ and not in $P\left(Y,\mathcal{Z}\right)$. On the other hand, the simplices $\sigma_{i_{1},i_{1}+1,\ldots,i_{\frac{m}{2}-1},i_{\frac{m}{2}-1}+1,n,1}$
	come with a minus sign and are exactly the simplices of $P\left(Y,\mathcal{Z}\right)$
	not in $P\left(Y,\mathcal{Z}^{'}\right)$. 
		
	The rest of the simplices are of the form $\sigma_{i_{1},i_{1}+1,\ldots,\widehat{i_{j}+\epsilon},\dots,i_{\frac{m}{2}-1},i_{\frac{m}{2}-1}+1,n,n+1,1}$
	for some $1\leq j \leq\frac{m}{2}-1$ and $\epsilon\in\{0,1\}.$ The
	set of indices $\left\{ i_{1},i_{1}+1,\ldots,\widehat{i_{j}+\epsilon},\dots,i_{\frac{m}{2}-1},i_{\frac{m}{2}-1}+1\right\} $
	is uniquely described as disjoint union of intervals $I_{1}\sqcup I_{2}\sqcup\ldots\sqcup I_{r}$
	where each $I_{k}$ is a sequence of consecutive integers in $\{2,\ldots,n-1\}$,
	and the least element of $I_{k+1}$ is greater by at least $2$ from
	the largest element of $I_{k}.$ Clearly, there is a unique
	interval of odd size, we denote it by $I_{k_{0}}$. There are now
	two possibilities; either $\left\{ 2\right\} \notin I_{k_{0}}$ and $\left\{ n-1\right\} \notin I_{k_{0}}$,
	or exactly one element of $\left\{ 2,n-1\right\} $ belongs to $I_{k_{0}}$
	(if both of them were in $I_{k_{0}}$ then $m$ must have been $0$).
		
	If $2,n-1\notin I_{k_{0}},$ then each simplex $\sigma_{i_{1},i_{1}+1,\ldots,\widehat{i_{j}+\epsilon},\dots,i_{\frac{m}{2}-1},i_{\frac{m}{2}-1}+1,n,n+1,1}$
	appears twice in $Q$ with opposite sign. Indeed, let $I_{k_{0}}^{+}$
	be the extension of $I_{k_{0}}$ by the least integer greater
	than all the elements of $I_{k_{0}},$ and let $I_{k_{0}}^{-}$ be
	the extension of $I_{k_{0}}$ by the largest integer smaller than
	all the elements of $I_{k_{0}}.$ Write
	\[
		I^{\pm}=I_{k_{0}}^{\pm}\sqcup\bigsqcup_{i\neq k_{0}}I_{i}.
	\]
	Then $\sigma_{i_{1},i_{1}+1,\ldots,\widehat{i_{j}+\epsilon},\dots,i_{\frac{m}{2}-1},i_{\frac{m}{2}-1}+1,n,n+1,1}$
	appears exactly in $\partial\sigma_{I^{+},n,n+1,1}$ and $\partial\sigma_{I^{-},n,n+1,1}$
	with opposite sign.
		
	Now suppose that $2$ or $n-1$ belongs to $I_{k_{0}}$. Suppose first that  $2$ belongs to $I_{k_{0}}$, then $I_{k_{0}}=I_{1}$
	and $\epsilon=1$. Then the simplex $\sigma_{i_{1},i_{1}+1,\ldots,\widehat{i_{j}+\epsilon},\dots,i_{\frac{m}{2}-1},i_{\frac{m}{2}-1}+1,n,n+1,1}$
	is the simplex of $P\left(Y,\mathcal{Z}^{'}\right)$ with the set
	of indices
	$\left\{ j_{1},j_{1}+1,\dots,j_{\frac{m}{2}-1},j_{\frac{m}{2}-1}+1,n,n+1\right\} $
	such that 
	\[
		j_{1}=1\,\,{\rm and}\,\,I_{1}=\left\{ j_{1}+1,j_{2},j_{2}+1, \dots ,j_{l_{1}}+1\right\} ,\dots,I_{r}=\left\{ j_{l_{r-1}},j_{l_{r-1}}+1,\dots,j_{\frac{m}{2}-1},j_{\frac{m}{2}-1}+1\right\} .
	\]
	Moreover, this simplex $\sigma_{j_{1},j_{1}+1,\dots,j_{\frac{m}{2}-1},j_{\frac{m}{2}-1}+1,n,n+1}$ appears in the sum of Eq.~(\ref{eq: the chain Q}) with a positive sign, thus it contributes to the simplices of $P\left(Y,\mathcal{Z}^{'}\right)$ not in $P\left(Y,\mathcal{Z}\right)$. In the same way, if $n-1\in I_{k_{0}}$, we obtain simplices of type $\sigma_{i_{1},i_{1}+1,\dots,i_{\frac{m}{2}},i_{\frac{m}{2}}+1,n+1,1}$, where $i_{\frac{m}{2}}+1=n$ contributing with a positive sign to Eq.~(\ref{eq: the chain Q}). This completes the list of simplices of
	$P\left(Y,\mathcal{Z}^{'}\right)$ not in $P\left(Y,\mathcal{Z}\right)$. 
		
	Putting everything together, we see that all simplices appearing
	in Eq.~(\ref{eq: the chain Q}) cancel in pairs, except exactly those
	simplices which are simplices $P\left(Y,\mathcal{Z}^{'}\right)$ but
	not of $P\left(Y,\mathcal{Z}\right)$, or of $P\left(Y,\mathcal{Z}\right)$
	but not of $P\left(Y,\mathcal{Z}^{'}\right)$, and in both of these
	cases they appear with the correct sign. Thus, Eq.~(\ref{eq: the chain Q})
	holds. 
		
	\paragraph{Step $3$.}
		
	We now show that the winding number of $Q$ is zero. Let \\$\left(i_{1},i_{1}+1,\dots,i_{\frac{m}{2}-1},i_{\frac{m}{2}-1}+1\right)$
	be a strictly ascending list of integers between $2$ and $n-2$.
	The boundary \\
	$\partial\sigma_{i_{1},i_{1}+1,\dots,i_{\frac{m}{2}-1},i_{\frac{m}{2}-1}+1,n,n+1,1}$
	is a cycle and this cycle does not touch the origin of $V_{Y}$. Indeed,
	up to modifying $C$ in $\grsp{k,n}$, we have 
	\[
		\left\langle C^{'}\mathcal{Z}^{'},i_{1},i_{1}+1,\dots,\widehat{i_{j}+\epsilon},\dots,i_{\frac{m}{2}-1},i_{\frac{m}{2}-1}+1,n,n+1,1\right\rangle \neq0,
	\]
	for $j\in\left[\frac{m}{2}\right]$ and $\epsilon\in\left\{ 0,1\right\} $.
	This is possible since by Lemma~\ref{lem: Consequence Cauchy Binet}
	these twistor coordinates only involve the minors of $C$, and the
	vanishing of these twistor coordinates is a codimension $1$ locus.
	Moreover, the rest of the twistor coordinates $\left\langle Y,i_{1},i_{1}+1,\dots,i_{\frac{m}{2}-1},i_{\frac{m}{2}-1}+1,\widehat{n},n+1,1\right\rangle $,
	$\left\langle Y,i_{1},i_{1}+1,\dots,i_{\frac{m}{2}-1},i_{\frac{m}{2}-1}+1,n,\widehat{n+1},1\right\rangle $
	and $\left\langle Y,i_{1},i_{1}+1,\dots,i_{\frac{m}{2}-1},i_{\frac{m}{2}-1}+1,n,n+1,\widehat{1}\right\rangle $
	do not vanish since $Y=C\mathcal{Z}=C^{'}\mathcal{Z}^{'}$ does not
	belong to the coarse boundary of the amplituhedron. Hence the winding
	number of $\partial\sigma_{i_{1},i_{1}+1,\dots,i_{\frac{m}{2}-1},i_{\frac{m}{2}-1}+1,n,n+1,1}$
	is well defined. Moreover, it follows from Eq.~(\ref{eq: the chain Q})
	that the winding number of $Q$ is the sum of the winding numbers
	of such cycles. We show that the winding number of $\partial\sigma_{i_{1},i_{1}+1,\dots,i_{\frac{m}{2}-1},i_{\frac{m}{2}-1}+1,n,n+1,1}$
	is zero.
		
	Suppose the winding number of $\partial\sigma_{i_{1},i_{1}+1,\dots,i_{\frac{m}{2}-1},i_{\frac{m}{2}-1}+1,n,n+1,1}$
	is nonzero, then the simplex $\sigma_{i_{1},i_{1}+1,\dots,i_{\frac{m}{2}-1},i_{\frac{m}{2}-1}+1,n,n+1,1}$
	is of dimension $m$ and contains the origin of $V_{Y}$. As mentioned
	in Remark~\ref{rem: crossing as altenating signs}, this implies
	that the sign of its twistor coordinates is alternating:
	\begin{align*}
{\rm sgn}\left\langle \widehat{i_{1}},i_{1}+1,\dots,i_{\frac{m}{2}-1},i_{\frac{m}{2}-1}+1,\right. & \left.n,n+1,1\right\rangle \\
 & =-{\rm sgn}\left\langle i_{1},\widehat{i_{1}+1},\dots,i_{\frac{m}{2}-1},i_{\frac{m}{2}-1}+1,n,n+1,1\right\rangle \\
 & =\cdots\\
 & ={\rm sgn}\left\langle i_{1},i_{1}+1,\dots,i_{\frac{m}{2}-1},i_{\frac{m}{2}-1}+1,\widehat{n},n+1,1\right\rangle \\
 & =-{\rm sgn}\left\langle i_{1},i_{1}+1,\dots,i_{\frac{m}{2}-1},i_{\frac{m}{2}-1}+1,n,\widehat{n+1},1\right\rangle .
\end{align*}
	However the last equality cannot hold. Indeed, since $Y=C\mathcal{Z}$
	and $C$ is positive, the strict coarse boundary conditions give ${\rm sign}\left\langle Y,i_{1},i_{1}+1,\dots,i_{\frac{m}{2}-1},i_{\frac{m}{2}-1}+1,n,\widehat{n+1},1\right\rangle =\left(-1\right)^{k+1}$.
	On the other hand, since $Y=C^{'}\mathcal{Z}^{'}$ it follows from
	Eq.~(\ref{eq: non vanishing boundary twistor}) and the coarse boundary
	conditions that ${\rm sign}\left\langle i_{1},i_{1}+1,\dots,i_{\frac{m}{2}-1},i_{\frac{m}{2}-1}+1,\widehat{n},n+1,1\right\rangle =\left(-1\right)^{k+1}$.
	This concludes the proof.
\end{proof}

\subsection{The winding number for $n=k+m$}

Suppose $m$ even. We show that there exists $\mathcal{Z}\in{\rm Mat}_{n,n}^{>0}$
and $C\in\grsp{k,k+m}$ such that the winding number is
\begin{equation}
	w_{n=k+m,k,m}\left(C\mathcal{Z},\mathcal{Z}\right)={\left\lfloor \frac{k+m-1}{2}\right\rfloor  \choose \frac{m}{2}}.\label{eq: explicit expression crossing n=00003Dk+m}
\end{equation}
Since we showed Proposition~\ref{prop:winding_independent_CZ} and Proposition~\ref{prop:winding_independent_n} that the winding number is independent of $\mathcal{Z}\in{\rm Mat}_{n,n}^{>0}$,
of $Y\in\ampliwb$ and of $n$, this proves Theorem~\ref{thm: winding constant}.

\paragraph{Step $1$.}

Since $C\in\grsp{k,k+m}$, it follows from the strict coarse boundary
conditions that each simplex involved in $P\left(Y,\mathcal{Z}\right)$
is of full dimension equal to $m-1$. Then we choose $0<\mu\ll1$ such
that the ray issued from the vector 
\[
	Z_{*}=Z_{n}+\mu Z_{n-1}+\cdots+\mu^{m-1}Z_{n-m+1}
\]
avoids the $\left(m-2\right)$-skeleton of $P\left(Y,\mathcal{Z}\right)$
(that is $P\left(Y,\mathcal{Z}\right)$ minus its maximal cells) in
$V_{Y}$. This is always possible since $\left(Z_{n},\dots,Z_{n-m+1}\right)$
is a basis of $V_{Y}$ and the skeleton is of codimension $2$. Thus,
the ray issued from $Z_{*}$ can only intersect a simplex of $P\left(Y,\mathcal{Z}\right)$
in its interior. Hence, the winding number is given by counting the
number of simplices intersected in their interior by the ray $Z_{*}$,
counted with a weight $\pm1$ depending on the orientation of the
simplex. More precisely, let $I$ be either a list $\left(i_{1},i_{1}+1,\dots,i_{\frac{m}{2}},i_{\frac{m}{2}}+1\right)$
of strictly ascending integers between $1$ and $n$, or a list $\left(i_{1},i_{1}+1,\dots,i_{\frac{m}{2}-1},i_{\frac{m}{2}-1}+1,n,1\right)$
of integers such that $i_{1},i_{1}+1,\dots,i_{\frac{m}{2}-1},i_{\frac{m}{2}-1}+1$
are strictly ascending between $2$ and $n-1$. We define the elementary
winding $w_{I}^{ele}$ to be $+1$ if for every $j\in\left\{ 1,m\right\} $
we have 
\begin{equation}
	{\rm sign}\left\langle Z_{*},I\backslash\left\{ j{\rm th}\,{\rm element}\right\} \right\rangle =\left(-1\right)^{j}{\rm sign}\left\langle I\right\rangle \label{eq: def elementary winding number}
\end{equation}
and $0$ otherwise. Thus, the elementary winding number $w_{I}^{ele}$
is $1$ precisely if the ray $Z_{*}$ hits the simplex $S\left(I\right)$
in its interior. Then, we have 
\[
	w_{k,m}=\sum_{I}{\rm sign}\left\langle I\right\rangle \times w_{I}^{ele},
\]
where the summation is over the lists described above. Since $C\in\grsp{k,k+m}$,
we have \\
${\rm sign}\left\langle i_{1},i_{1}+1,\dots,i_{\frac{m}{2}},i_{\frac{m}{2}}+1\right\rangle =+1$
and the negative weights in the summation only appear for $I=\left(i_{1},i_{1}+1,\dots,i_{\frac{m}{2}-1},i_{\frac{m}{2}-1}+1,n,1\right)$
and $k$ even.  

\paragraph{Step $2$ .}

In order to compute the winding number, it suffices to compute signs
of certain twistor coordinates. Since $n=k+m$, this boils down computing the sign of determinants of square $k+m$ matrices. 

If $I=\left(i_{1},i_{1}+1,\dots,i_{\frac{m}{2}},i_{\frac{m}{2}}+1\right)$,
we have
\begin{equation}
	{\rm sign}\left\langle Z_{*},I\backslash\left\{ i_{j}+\epsilon\right\} \right\rangle =\left(-1\right)^{k+i_{j}+\left(1-\epsilon\right)},\label{eq: sign for simplex of type a}
\end{equation}
$j\in\left\{ 1,\dots,\frac{m}{2}\right\} $ and $\epsilon\in\left\{ 0,1\right\} $.
Indeed, we have
\[
	\left\langle Z_{*},i_{1},i_{1}+1,\dots,\widehat{i_{j}+\epsilon},\dots,i_{\frac{m}{2}},i_{\frac{m}{2}}+1\right\rangle =\det\left(\begin{matrix}C\\
	V_{*}\\
	I_{i_{1},i_{1}+1,\ldots,\widehat{i_{j}+\epsilon},\ldots,i_{\frac{m}{2}},i_{\frac{m}{2}}+1}
	\end{matrix}\right)\det(\mathcal{Z}),
\]
where $V_{*}=\left(0,\dots,0,\mu^{n-m+1},\dots,\mu^{0}\right)$ and
$I_{i_{1},i_{1}+1,\ldots,\widehat{i_{j}+\epsilon},\ldots,i_{\frac{m}{2}},i_{\frac{m}{2}}+1}$
is the $\left(m-1\right)\times\left(k+m\right)$ matrix whose $l$th
row has a $1$ at the $l$th index of the list $i_{1},i_{1}+1,\ldots,\widehat{i_{j}+\epsilon},\ldots,i_{\frac{m}{2}},i_{\frac{m}{2}}+1$
and zeros elsewhere. We first expand the determinant along the rows of $I$; the only row possibly
contributing with a sign to the determinant is the one with a $+1$ in the position
$i_j+\left(1-\epsilon\right)$, since the sign contributions coming from the development of the other rows of $I$ compensate two by two. We obtain
\begin{align*}
	\det\left(\begin{matrix}C\\
	V_{*}\\
	I_{i_{1},i_{1}+1,\ldots,\widehat{i_{j}+\epsilon},\ldots,i_{\frac{m}{2}},i_{\frac{m}{2}}+1}
	\end{matrix}\right) & =\left(-1\right)^{k+i_{j}+\left(1-\epsilon\right)}\det\left(\begin{matrix}C_{[n]\setminus\{i_{1},i_{1}+1,\ldots,\widehat{i_{j}+\epsilon},\ldots,i_{\frac{m}{2}},i_{\frac{m}{2}}+1\}} \\
	\left(V_{*}\right)_{[n]\setminus\{i_{1},i_{1}+1,\ldots,\widehat{i_{j}+\epsilon},\ldots,i_{\frac{m}{2}},i_{\frac{m}{2}}+1\}}
	\end{matrix}\right).
\end{align*}
Finally, the sign of the determinant on the RHS is determined by the
sign of its smallest order in $\mu$ since $0<\mu\ll1$. There are
two cases depending on $M=\max\left(\left[n\right]\backslash I\right)$.
If $i_{j}+1<M$ this determinant is given by 
\[
	\mu^{n-M}\underset{=+1}{\underbrace{\left(-1\right)^{k+1+1+M}}}\det\left(C_{[n]\setminus\{i_{1},i_{1}+1,\ldots,\widehat{i_{j}+\epsilon},\dots,i_{\frac{m}{2}},i_{\frac{m}{2}}+1\}\cup\left\{ M\right\} }\right)+o\left(\mu^{n-M}\right),
\]
where we used $M\equiv n\equiv k\mod2$ to simplify the sign. If $i_{j}>M$,
then the determinant is given by 
\[
	\mu^{n-\left(i_{j}+\epsilon\right)}\underset{=+1}{\underbrace{\left(-1\right)^{k+1+i_{j}+\epsilon+\epsilon}}}\det\left(C_{[n]\setminus\{i_{1},i_{1}+1,\dots,i_{\frac{m}{2}},i_{\frac{m}{2}}+1\}}\right)+o\left(\mu^{n-\left(i_{j}+\epsilon\right)}\right),
\]
where we used that in this case $i_{j}\equiv n-1\equiv k-1\mod2$.
Thus, we deduce Eq.~(\ref{eq: sign for simplex of type a}) from
the positivity of $C$ and $\mathcal{Z}$. 

If $I=\left(i_{1},i_{1}+1,\dots,i_{\frac{m}{2}-1},i_{\frac{m}{2}-1}+1,n,1\right)$,
we obtain in a similar way
\begin{equation}
	{\rm sign}\left\langle Z_{*},I\backslash\left\{ i_{j}+\epsilon\right\} \right\rangle =\left(-1\right)^{i_{j}+\left(1-\epsilon\right)},\label{eq: sign for simplex of type b}
\end{equation}
where $j\in\left\{ 1,\frac{m}{2}-1\right\} $ and $\epsilon\in\left\{ 0,1\right\} $.
We also have 
\begin{equation}
	{\rm sign}\left\langle Z_{*},I\backslash\left\{ n\right\} \right\rangle =+1\quad{\rm and}\quad{\rm sign}\left\langle Z_{*},I\backslash\left\{ 1\right\} \right\rangle =\left(-1\right)^{k+1}\label{eq: sign for simplex of type b n1}.
\end{equation}

\paragraph{Step $3$. }

We now count the number of simplices contributing to the winding number.
There are two types of simplices: simplices of type $\left(a\right)$ associated
to lists $\left(i_{1},i_{1}+1,\dots,i_{\frac{m}{2}},i_{\frac{m}{2}}+1\right)$
of strictly ascending integers between $1$ and $n$, and simplices
of type $\left(b\right)$ associated to lists $\left(i_{1},i_{1}+1,\dots,i_{\frac{m}{2}-1},i_{\frac{m}{2}-1}+1,n,1\right)$
of integers such that $i_{1},i_{1}+1,\dots,i_{\frac{m-1}{2}},i_{\frac{m-1}{2}}+1$
are strictly ascending between $2$ and $n-1$. 

If $k$ is odd, it follows from Eq.~(\ref{eq: sign for simplex of type b n1})
that a simplex of type $\left(b\right)$ cannot satisfy Eq.~(\ref{eq: def elementary winding number}).
However, it follows from Eq.~(\ref{eq: sign for simplex of type a})
that simplex of type $\left(a\right)$ contributes by $+1$ to the
winding number if and only if 
\[
	i_{1}\equiv i_{2}\equiv\cdots\equiv i_{\frac{m}{2}}\equiv0\mod2,
\]
where $1\leq i_{1}\leq\dots\leq i_{\frac{m}{2}}\leq n-1$. Thus, the
winding number is equal to the number of such sequences $i_{1},\ldots,i_{\frac{m}{2}}$,
there are
\[
	{\frac{n-1}{2} \choose \frac{m}{2}}={\frac{k+m-1}{2} \choose \frac{m}{2}}
\]
possibilities. This ends the proof of Eq.\,(\ref{eq: explicit expression crossing n=00003Dk+m})
for $k$ odd. 

If $k$ is even, a simplex of type $\left(a\right)$ contributes by
$+1$ to the winding number if and only if
\[
	i_{1}\equiv i_{2}\equiv\cdots\equiv i_{\frac{m}{2}}\equiv1\mod2,
\]
where $1\leq i_{1}\leq\dots\leq i_{\frac{m}{2}}\leq n-1$. There are
${\frac{n}{2} \choose \frac{m}{2}}$ possibilities. Moreover, a simplex
of type $\left(b\right)$ contributes by $-1$ to the winding number
if and only if 
\[
	i_{1}\equiv i_{2}\equiv\cdots\equiv i_{\frac{m}{2}-1}\equiv0\mod2,
\]
where $2\leq i_{1}\leq\dots\leq i_{\frac{m}{2}-1}\leq n-2$. There
are ${\frac{n-2}{2} \choose \frac{m}{2}}$ possibilities. Thus, the
winding number is equal to 
\[
	{\frac{n}{2} \choose \frac{m}{2}}-{\frac{n-2}{2} \choose \frac{m}{2}-1}={\frac{n}{2}-1 \choose \frac{m}{2}},
\]
that is Eq.~(\ref{eq: explicit expression crossing n=00003Dk+m})
for $n=k+m$ and $k$ even.

This ends the proof of Theorem~\ref{thm: winding constant}.

\subsection{Maximality of the winding number for $m=2$}

In this subsection, we prove Proposition~\ref{prop: maximality} that we recall now. Fix $Y\in\grwcb{k,k+2}$ and $\mathcal{Z}\in{\rm Mat}_{n,k+2}^{>0}$. We prove in the first part that 
\[
	w_{n,k,2}\left(Y,\mathcal{Z}\right)\leq\left\lfloor \frac{k+1}{2}\right\rfloor ,
\]
and in the second part that if $w_{n,k,2}\left(Y,\mathcal{Z}\right)=\left\lfloor \frac{k+1}{2}\right\rfloor $ and satisfies the coarse boundary conditions Eq.~(\ref{eq: coarse boundary m even}), then $Y\in\mathcal{A}_{n,k,2}$. 

\paragraph{Proof of part $1$. }
First, there exists $Y^{'}\in{\rm Gr}_{k,k+2}^{{\rm wcb}}$ in the connected component of $Y$ in ${\rm Gr}_{k,k+2}^{{\rm wcb}}$ such that: 

\begin{itemize}
	\item $Z_{i}^{'}=\pi_{Y^{'}}\left(\mathcal{Z}_{i}\right)$ is nonzero in $V_{Y^{'}}$,
	\item any two vectors $\left(Z_{i}^{'},Z_{j}^{'}\right)$, for $i\neq j$, are non-collinear,
	\item $w_{n,k,2}\left(Y^{'},\mathcal{Z}\right)=w_{n,k,2}\left(Y,\mathcal{Z}\right)$. 
\end{itemize}
Indeed, each connected component of ${\rm Gr}_{k,k+2}^{{\rm wcb}}$ is of full-dimension $2k$ since the locus $\mathcal{L}$ defined in Section~\ref{sec:Coarse_boundary} is of dimension $1$. Thus, we can choose a point in the connected component of $Y$ in ${\rm Gr}_{k,k+2}^{{\rm wcb}}$ generic enough to satisfy the first two conditions. Moreover, any two points in the same connected component of ${\rm Gr}_{k,k+2}^{{\rm wcb}}$ have the same winding number (see Remark~\ref{rem:constancy-winding-Gr}). 
We now compute the winding number of $Y$. Since the winding numbers of $Y$ and of $Y^{'}$ are equal, up to redefining $Y:=Y^{'}$, we suppose that $Y$ satisfies the two first conditions. 

Denote by $s$ the number of sign flips of $\left(\left\langle Y,1,i\right\rangle \right)_{i\in\left[n\right]}$.
We show that 
\[
	\begin{cases}
		2w_{n,k,2}\left(Y,\mathcal{Z}\right)=s+1 & {\rm for\;}s\;{\rm odd},  \\
		2w_{n,k,2}\left(Y,\mathcal{Z}\right)=s   & {\rm for\;}s\;{\rm even}. 
	\end{cases}
\]
Then, it follows from Remark~\ref{rem: max sign flips} that $s\leq k$. Thus we deduce that $w_{n,k,2}\left(Y,\mathcal{Z}\right)\leq\left\lfloor \frac{k+1}{2}\right\rfloor $. 


Choose $Z_{*}$ in $V_{Y}\backslash\left\{ 0\right\} $ in a small neighborhood of $Z_{1}$ such that the line $l$ generated by $Z_{*}$ avoids all the points $Z_{i}$, for $1\leq i\leq n$. This is always possible since $\cup_{1\leq i\leq n}{\rm span}\left(Z_{i}\right)$ is of codimension $1$ in $V_{Y}$. Moreover, since no pair of vector $(Z_i,Z_j)$, for $i\neq j$, is collinear, the line $l$ can only intersect a simplex $S\left(i,i+1\right)$ (resp. $S\left(1,n\right)$) transversally and in its relative interior. In particular, the line intersects the simplex $S\left(i,i+1\right)$, for $1\leq i\leq n-1$, if and only if $\textrm{sign} \left\langle Y,Z_{*},Z_{i}\right\rangle =-\textrm{sign}\left\langle Y,Z_{*},Z_{i+1}\right\rangle $ (resp. the simplex $S\left(1,n\right)$ if and only if $\textrm{sign}\left\langle Y,Z_{*},Z_{n}\right\rangle =-\textrm{sign}\left\langle Y,Z_{*},Z_{1}\right\rangle $). We say that this intersection is positive or negative if in addition we have $\textrm{sign}\left\langle Y,i,i+1\right\rangle $ is positive or negative (resp. $\textrm{sign}\left\langle Y,1,n\right\rangle $ is positive or negative). Then, by the definition of the winding number, we have 
\[
	2w_{n,k,2}\left(Y,\mathcal{Z}\right)=\left|\sum_{x\in l\cap P\left(Y,\mathcal{Z}\right)}\epsilon\left(x\right)\right|,
\]
where $\epsilon\left(x\right)$ is $+1$ if the intersection of $l$
and $P\left(Y,\mathcal{Z}\right)$ at $x$ is positive and $-1$ if
it is negative. In particular, we have
\[
	2w_{n,k,2}\left(Y,\mathcal{Z}\right)\leq\sum_{x\in l\cap P\left(Y,\mathcal{Z}\right)}1.
\]
Moreover, the number of intersections of $l$ with $P\left(Y,\mathcal{Z}\right)$ is $s$ or $s+1$ depending on the parity of $s$. Indeed, by choosing $Z_{*}$ close enough to $Z_{1}$, we get that the number of sign flips $s$ of $\left(\left\langle Y,1,i\right\rangle \right)_{i\in\left[n\right]}$ counts the number of intersections of $l$ with the simplices $S\left(i,i+1\right)$ for $1\leq i\leq n-1$. If $s$ is even, then the list $\left(\left\langle Y,1,i\right\rangle \right)_{i\in\left[n\right]}$ has an even number of sign flips, and then ${\rm sign} \left\langle Y,Z_{1},Z_{2}\right\rangle ={\rm sign}\left\langle Y,Z_{1},Z_{n}\right\rangle $. By choosing $Z_*$ close enough to $Z_1$, we also get ${\rm sign} \left\langle Y,Z_{*},Z_{2}\right\rangle ={\rm sign}\left\langle Y,Z_{*},Z_{n}\right\rangle $. We can in addition choose $Z_{*}$ such that $Z_{1}$ and $Z_{2}$ belong to the same half plane of $\mathbb{R}^{2}\backslash l$. Then, $l$ does not intersect $S\left(1,2\right)$ and $S\left(1,n\right)$, so the number of intersections of $l$ with $P\left(Y,\mathcal{Z}\right)$ is $s$. Similarly, if $s$ is odd, then with the same choice of $Z_{*}$ we see that $l$ intersects $S\left(1,n\right)$ and then the number of intersections of $l$ with $P\left(Y,\mathcal{Z}\right)$ is $s+1$.

\paragraph{Proof of part $2$.}

Now suppose that $Y\in\grwcb{k,k+2}$, satisfies the coarse boundary
conditions and $w_{n,k,2}\left(Y,\mathcal{Z}\right)=\left\lfloor \frac{k+1}{2}\right\rfloor $.
This implies by the first part of the proof that the number of sign
flips $s$ of $\left(\left\langle Y,1,i\right\rangle \right)_{i\in\left[n\right]}$
is maximal and equal to $k$. Hence, it follows from Theorem~5.1
of \cite{parisi2021m} that $Y\in\mathcal{A}_{n,k,2}$.

\section{The crossing number\label{sec: crossing number}}
The goal of this section is to prove Theorem~\ref{thm: crossing is constant}. The structure of the proof is similar to the case of the winding number: we first prove in Subsections~\ref{subsec: C and Z equations}, \ref{subsec: Consequences of C and Z},~\ref{subsec:Properties-of-simplices} and ~\ref{subsec:Proving the crossing thm} that the crossing number is independent of the point in $\amplio \times \matp$, 
then we prove in Subsection~\ref{subsec:Independence n crossing} that the crossing number is independent of $n$ and finally we obtain in Subsection~\ref{subsec: crossing for n=00003Dk+m} the explicit expression of the crossing number for $n=k+m$. 

The first part of the proof, that is the constancy of the crossing number in $\amplio \times \matp$, is structured as follows. In Subsection~\ref{subsec: C and Z equations} we derive two types of
equations on the twistor coordinates: the $C$-equations and the $\mathcal{Z}$-equations.
These equations involve the Pl\"ucker coordinates of $C$ and $\mathcal{Z},$
and are obtained from the Pl\"ucker relations. We emphasize that they
are valid for any $n,k,m$. From the $C$- and $\mathcal{Z}$- equations,
together with the positivity of the minors of $C$ and $\mathcal{Z}$,
we deduce in Subsection~\ref{subsec: Consequences of C and Z} nontrivial
constraints on the twistor coordinates. These constraints are used
in Subsection~\ref{subsec:Properties-of-simplices} to exclude some
configurations of simplices relative to the origin in the projected
space. These are precisely the configurations where the crossing number
can change. We then conclude in Subsection~\ref{subsec:Proving the crossing thm}
that the crossing number is constant in $\amplio$.
\subsection{The $C$-equations and the $\mathcal{Z}$-equations\label{subsec: C and Z equations}}

In this section, we derive two sets of equations involving the twistor
coordinates: the $C$-equations and the $\mathcal{Z}$-equations.
These equations are obtained in the same way; they follow from Pl\"ucker
relations in $\gr{k,n}$ and in $\gr{k+m,n}$. In particular, the
positivity of the minors of $C$ and $\mathcal{Z}$ is not necessary.
Moreover, these equations are valid for any $m$, even or odd.

\subsubsection{The $C$-equations}
We recall that $\sub k$ denotes the set of lists of elements of $\left[n\right]$  of size $k$ sorted in ascending order. If $A$ and $B$ are two lists of elements of $[n]$, we denote by $A,B$ the concatenation of the two lists, and by $A\cup B$ the list obtained by sorting in ascending order the elements of $A$ and of $B$.  
We also recall that, if $I\in\sub k$ and $V\in\gr{k,n}$ is represented by a $k\times n$ matrix $M$, then the Pl\"ucker coordinates $p_{I}\left(V\right)$ denotes the minor of $M$ relative to $I$, and they do not depend on the choice of $M$ up to simultaneous rescaling by a nonzero constant.
\begin{prop}
	[$C$-equations]Let $\left(n,k,m\right)$ be a triplet of nonnegative
	integers such that $k+m\leq n$. Let $\mathcal{Z}$ be a $n\times\left(k+m\right)$
	matrix of rank $k+m$, let $C\in{\rm Gr}_{k,n}$ and
	$Y=\tilde{\mathcal{Z}}\left(C\right)$.Let $A\in\sub{k-1}$ and $B\in\sub{m-1}$.
	The $C$-equations are
	\begin{equation}
		\sum_{i=1}^{n}p_{A,i}\left(C\right)\left\langle Y,B,i\right\rangle =0,\label{The C equations}
	\end{equation}
	where we used Notation~\ref{Notation: twistor coordinates} for the
	twistor coordinate $\left\langle Y,B,i\right\rangle$.
\end{prop}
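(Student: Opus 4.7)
The plan is to expand each twistor coordinate using Cauchy-Binet, reorganize the resulting double sum by grouping terms according to $(k+1)$-subsets, and recognize the inner sums as classical Pl\"ucker relations for $C$.

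First, I would invoke Lemma~\ref{lem: Consequence Cauchy Binet} to replace each $\langle Y,B,i\rangle$ by $\sum_{J\in\sub{k}} p_{J}(C)\,\langle J,B,i\rangle$. This turns the left-hand side of Eq.~\eqref{The C equations} into a double sum
$$\sum_{i=1}^{n}\sum_{J\in\sub{k}} p_{A,i}(C)\,p_{J}(C)\,\langle J,B,i\rangle.$$
Since $\langle J,B,i\rangle$ vanishes whenever $i\in J$ (two repeated columns of $\mathcal{Z}$), only pairs $(J,i)$ with $i\notin J$ contribute, and for each such pair the set $L:=J\sqcup\{i\}$ is a $(k+1)$-subset of $[n]$.

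Second, I would regroup the sum by the $(k+1)$-subset $L=\{l_{1}<\cdots<l_{k+1}\}$, parametrizing the splittings $(J,i)=(L\setminus\{l_{s}\},l_{s})$ by $s\in\{1,\ldots,k+1\}$. Writing $D_{L}:=\det(\mathcal{Z}_{l_{1}},\ldots,\mathcal{Z}_{l_{k+1}},\mathcal{Z}_{b_{1}},\ldots,\mathcal{Z}_{b_{m-1}})$, a careful count of the transpositions needed to move $\mathcal{Z}_{l_{s}}$ from the last column past $\mathcal{Z}_{B}$ and the remaining $\mathcal{Z}_{l_{t}}$ into its sorted position gives $\langle L\setminus l_{s},B,l_{s}\rangle=(-1)^{k+m-s}\,D_{L}$. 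The double sum thus becomes
$$(-1)^{k+m}\sum_{L\in\sub{k+1}} D_{L}\cdot\Bigl(\sum_{s=1}^{k+1}(-1)^{s}\,p_{A,l_{s}}(C)\,p_{L\setminus l_{s}}(C)\Bigr).$$

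Third, I would recognize the inner sum as a standard Pl\"ucker relation for $C$ applied to the index sets $A$ (size $k-1$) and $L$ (size $k+1$); this identity holds for any $k\times n$ matrix and equals zero identically, which forces each coefficient $D_{L}$ to drop out and yields Eq.~\eqref{The C equations}. Note that the positivity of $C$ and $\mathcal{Z}$ plays no role, and no hypothesis is needed on the parity of $m$ or on $Y$ beyond $Y=\tilde{\mathcal{Z}}(C)$.

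The main obstacle is the sign bookkeeping in the second step: reconciling the ``appended'' convention for $p_{A,i}(C)$ (where $i$ is placed after $A$ rather than inserted in sorted order) with the sorted conventions implicit in the Pl\"ucker relation, and simultaneously tracking the sign produced by moving $\mathcal{Z}_{l_{s}}$ past $\mathcal{Z}_{B}$. Once these signs are pinned down consistently, the Pl\"ucker relation applies verbatim and the identity falls out immediately.
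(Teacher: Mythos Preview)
Your proof is correct and follows essentially the same route as the paper: expand via Lemma~\ref{lem: Consequence Cauchy Binet}, regroup by the $(k+1)$-subset $L$, and identify the resulting inner sum $\sum_{s}(-1)^{s}p_{A,l_{s}}(C)\,p_{L\setminus l_{s}}(C)$ as a Pl\"ucker relation for $C$. Your treatment is in fact slightly more explicit than the paper's, which writes down the coefficient of $\langle l_{1},\dots,l_{k+1},B\rangle$ but leaves the final ``this vanishes by the Pl\"ucker relations'' step implicit.
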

\begin{rem}
	Both the vector of Pl\"ucker coordinates $(p_{A,i}\left(C\right))_{A,i}$ and the vector of twistor coordinates
	$(\left\langle B,i\right\rangle)_{B,i} $ are defined up to a non zero
	multiplicative scalar, corresponding to the choice of the matrix representing $C$
	and $Y$. However these scalars do not affect Eq.~(\ref{The C equations}).
\end{rem}

We now prove the $C$-equations.
\begin{proof}
	From Lemma~\ref{lem: Consequence Cauchy Binet} we get
	\begin{equation}
		\sum_{i=1}^{n}p_{A,i}\left(C\right)\left\langle Y,B,i\right\rangle =\sum_{i=1}^{n}\sum_{J=\left(j_{1}<\cdots<j_{k}\right)\in\sub k}p_{A,i}\left(C\right)p_{J}\left(C\right)\left\langle J,B,i\right\rangle .\label{eq: proof C eq}
	\end{equation}
	Fix $L=\left\{ l_{1}<\cdots<l_{k+1}\right\} \in\sub{k+1}$. We now extract the coefficient of the determinant $\left\langle l_{1},\dots,l_{k+1},B\right\rangle $
	in the RHS of Eq.~(\ref{eq: proof C eq}). To do so, the
	index $i$ must be in $L$ and once it is fixed, we have
	$J=L\backslash\left\{ i\right\} $. Taking care of the sign given
	by the antisymmetry of the determinant, we write the coefficient of
	$\left\langle l_{1},\dots,l_{k+1},B\right\rangle $ in the RHS of
	Eq.~(\ref{eq: proof C eq}) as
	\[
		\left(-1\right)^{k+m}\left(\sum_{\alpha=1}^{k+1}\left(-1\right)^{\alpha}p_{A,l_{\alpha}}\left(C\right)p_{l_{1},\dots,\widehat{l_{\alpha}},\dots,l_{k+1}}\left(C\right)\right).
	\]
	This expression vanishes by the Pl\"ucker relations (see for instance \cite{griffiths2014principles}). This ends the proof. 
\end{proof}

\subsubsection{The $\mathcal{Z}$-equations}
\begin{prop}
	[$\mathcal{Z}$-equations]Let $\left(n,k,m\right)$ be a triplet of nonnegative integers such that $k+m < n$. Let $\mathcal{Z}$ be a $n\times\left(k+m\right)$ matrix of rank $k+m$ and denote by $W=\mathcal{Z}^{T}$ the $\left(k+m\right)$-plane in $\mathbb{R}^{n}$ generated by the columns of $\mathcal{Z}$. Let $C\in{\rm Gr}_{k,n}$ and $Y=\tilde{\mathcal{Z}}\left(C\right)$. Let $A\in\sub{k+m+1}$ and $B\in\sub{m-1}$. The $\mathcal{Z}$-equations are
	\begin{align}
		\sum_{i\in A}\left(-1\right)^{\#i}p_{A\backslash i}\left(W\right)\langle Y,B,i\rangle & =0,\label{The Z equation} 
	\end{align}
	where $\#i$ is the position of $i$ in the list $A$.
\end{prop}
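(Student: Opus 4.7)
The plan is to mirror the proof of the $C$-equations above, with the roles of $C$ and $\mathcal{Z}$ interchanged. First I apply Cauchy--Binet (Lemma~\ref{lem: Consequence Cauchy Binet}) to expand each twistor coordinate $\langle Y,B,i\rangle$ appearing in the $\mathcal{Z}$-equations, swap the order of summation, and recognise the resulting inner sum as a Plücker relation --- this time for $W=\mathcal{Z}^{T}\in\gr{k+m,n}$ rather than for $C\in\gr{k,n}$.

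Concretely, Lemma~\ref{lem: Consequence Cauchy Binet} gives
\[
\langle Y,B,i\rangle \;=\; \sum_{J\in\sub{k}} p_{J}(C)\,\langle J,B,i\rangle,
\]
where $\langle J,B,i\rangle$ is the $(k+m)\times(k+m)$ determinant of the rows of $\mathcal{Z}$ indexed, in that order, by $J$, $B$, and $i$. Substituting into the left-hand side of Eq.~(\ref{The Z equation}) and exchanging the order of summation yields
\[
\sum_{i\in A}(-1)^{\#i}\,p_{A\setminus i}(W)\,\langle Y,B,i\rangle \;=\; \sum_{J\in\sub{k}} p_{J}(C)\Biggl[\sum_{i\in A}(-1)^{\#i}\,p_{A\setminus i}(W)\,\langle J,B,i\rangle\Biggr],
\]
so it suffices to show that the inner bracket vanishes for each fixed $J$.

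Fix $J\in\sub{k}$. If $J\cap B\neq\emptyset$ then $\langle J,B,i\rangle=0$ for every $i$ (a row of $\mathcal{Z}$ is repeated), so I may assume that $J$ and $B$ are disjoint and set $L:=J\cup B\in\sub{k+m-1}$. For $i\in L$ the determinant vanishes again, while for $i\notin L$ it equals the sorted Plücker coordinate $p_{L\cup i}(W)$ up to the sign of the permutation that sorts $(J,B,i)$ into increasing order. A direct inversion count factors this sign as $(-1)^{\tau(J,B)}(-1)^{|L_{>i}|}$, where $\tau(J,B)$ depends only on $J,B$ and $|L_{>i}|$ is the number of elements of $L$ strictly greater than $i$. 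After pulling the $i$-independent factor $(-1)^{\tau(J,B)}$ out of the sum, the inner bracket becomes, up to a global sign,
\[
\sum_{i\in A\setminus L}(-1)^{\#i+|L_{>i}|}\,p_{A\setminus i}(W)\,p_{L\cup i}(W),
\]
which is precisely the standard Plücker relation in $\gr{k+m,n}$ associated to the pair $(L,A)$, and therefore vanishes.

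The main obstacle --- essentially the only nontrivial step --- is the sign bookkeeping: I have to verify that the sign $(-1)^{\#i+|L_{>i}|}$ produced by Cauchy--Binet is exactly the sign carried by the $i$-th term of the Plücker relation when the latter is expressed in terms of sorted Plücker coordinates. This reduces to the observation that the unsorted bracket $[l_{1},\dots,l_{k+m-1},i]$ differs from $p_{L\cup i}(W)$ by the factor $(-1)^{|L_{>i}|}$, which is immediate from the antisymmetry of the determinant. Once this convention is fixed, the identity is a direct instance of a Plücker relation for $W$.
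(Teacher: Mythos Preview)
Your proof is correct and follows the same approach as the paper: expand each twistor coordinate via Cauchy--Binet, swap the order of summation, and recognise the inner sum as a Pl\"ucker relation for $W\in\gr{k+m,n}$. The only difference is cosmetic: the paper works throughout with unsorted (antisymmetric) Pl\"ucker symbols $p_{J,B,i}(W)=\langle J,B,i\rangle$, so the inner sum is immediately in the standard form $\sum_{i\in A}(-1)^{\#i}p_{A\setminus i}(W)\,p_{J,B,i}(W)=0$ without any sign bookkeeping, whereas you convert to sorted coordinates and track the factor $(-1)^{|L_{>i}|}$ explicitly.
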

\begin{proof}
	Using Lemma~\ref{lem: Consequence Cauchy Binet} we get
	\[
		\sum_{i\in A}\left(-1\right)^{\#i}p_{A\backslash i}\left(W\right)\langle Y,B,i\rangle=\sum_{i\in A}\sum_{J\in\sub k}\left(-1\right)^{\#i}p_{J}\left(C\right)p_{A\backslash i}\left(W\right)\left\langle J,B,i\right\rangle .
	\]
	Fix $J=\left(j_{1}<\cdots<j_{k}\right)\in\sub k$, the coefficient
	of $p_{J}\left(C\right)$ is
	\[
		\sum_{i\in A}\left(-1\right)^{\#i}p_{A\backslash i}\left(W\right)\left\langle J,B,i\right\rangle ,
	\]
	and we can write this expression as
	\[
		\sum_{i\in A}\left(-1\right)^{\#i}p_{A\backslash i}\left(W\right)p_{J,B,i}\left(W\right).
	\]
	This last expression vanishes by Pl\"ucker relations (see \cite{griffiths2014principles}) in ${\rm Gr}_{k+m,n}$.
	This proves the $\mathcal{Z}$-equations.
\end{proof}

\subsection{Consequences of the $C$- and $\mathcal{Z}$- equations on twistor
	coordinates\label{subsec: Consequences of C and Z}}

From now on, we suppose that $m=2r-1$ for $r\geq1$. The purpose
of this section is to deduce from the $C$- and $\mathcal{Z}$- equations,
together with positivity of the minors of $C$ and $\mathcal{Z},$
the following constraints on twistor coordinates.
\begin{prop}
	\label{cor: forbidden vanishings}Let $Y \in \amplio$ and
	let $B=\left(b_{1},b_{1}+1,\dots,b_{r-1},b_{r-1}+1\right)$ be a list
	of $2r-2=m-1$ integers in $\left[n\right]$.
	\begin{enumerate}
		\item The list $\left(\left\langle Y,B,i\right\rangle \right)_{i\in\left[n\right]\backslash B}$
		      cannot contain three consecutive twistor coordinates\\
		      $\left\langle Y,B,i_{0}^{-}\right\rangle ,\left\langle Y,B,i_{0}\right\rangle ,\left\langle Y,B,i_{0}^{+}\right\rangle $,
		      such that
		      \[
		      	\left\langle Y,B,i_{0}\right\rangle =0{\rm \,\,\,\,and\,\,\,\,}{\rm sign}\left(\left\langle Y,B,i_{0}^{-}\right\rangle \right)={\rm sign}\left(\left\langle Y,B,i_{0}^{+}\right\rangle \right)\neq0,
		      \]
		      where the indices $\left(i_{0}^{-},i_{0},i_{0}^{+}\right)$ satisfy:
		      $2\leq i_{0}\leq n-1$ and
		      \[
		      	i_{0}^{-}=\max\left(\left[1,\dots,i_{0}-1\right]\cap B^{c}\right),
		      \]
		      \[
		      	i_{0}^{+}=\min\left(\left[i_{0}+1,\dots,n\right]\cap B^{c}\right),
		      \]
		      where $B^{c}$ is the complement of $B$ in $\left[n\right]$.
		\item The list $\left(\left\langle Y,B,i\right\rangle \right)_{i\in\left[n\right]\backslash B}$
		      cannot contain two consecutive zeros.
	\end{enumerate}
\end{prop}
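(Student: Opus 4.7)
The plan is to prove both parts by contradiction using the $\mathcal{Z}$-equation of Subsection~\ref{subsec: C and Z equations} together with the total positivity of the minors of $C$ and $\mathcal{Z}$. Write $Y = \tilde{\mathcal{Z}}(C)$ with $C \in \grp{k,n}$, and observe that $\langle Y, B, i \rangle = 0$ whenever $i \in B$, since the defining determinant then has a repeated row. Consequently, for any $A \in \sub{k+m+1}$ containing $B$, the $\mathcal{Z}$-equation reduces to
\[
\sum_{i \in A \setminus B} (-1)^{\#i}\, p_{A \setminus i}(W)\, \langle Y, B, i \rangle = 0,
\]
in which all coefficients $p_{A \setminus i}(W)$ are strictly positive and $|A \setminus B| = k + 2$.

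A key combinatorial observation guides the choice of $A$: because $B$ consists of $r-1$ consecutive pairs $(b_j, b_j+1)$, the number of elements of $B$ lying strictly between two neighboring elements of $[n] \setminus B$ is either $0$ or $2$. Hence, for three consecutive elements of $[n] \setminus B$ that happen to lie in $A$, the associated signs $(-1)^{\#i}$ genuinely alternate; in particular, in the configuration of Part~$1$ the parities satisfy $(-1)^{\#i_0^-} = (-1)^{\#i_0^+}$, while $(-1)^{\#i_0}$ carries the opposite sign.

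For Part~$1$, I would apply the reduced $\mathcal{Z}$-equation with $A = B \cup \{i_0^-, i_0, i_0^+\} \cup J$ for a subset $J \subset [n] \setminus (B \cup \{i_0^-, i_0, i_0^+\})$ of size $k - 1$. The middle term $\langle Y, B, i_0\rangle$ vanishes by hypothesis, and the parity observation forces the $i_0^-$ and $i_0^+$ contributions to carry the same nonzero sign (using the assumption that $\langle Y, B, i_0^\pm \rangle$ share a nonzero sign). The strategy is to choose $J$ so that the remaining $k-1$ terms cannot compensate, for instance by selecting indices in $J$ where the signs of $(-1)^{\#j} \langle Y, B, j \rangle$ match the $i_0^\pm$-sign, yielding a strictly positive sum equal to zero --- a contradiction. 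For Part~$2$, the same reduced $\mathcal{Z}$-equation with $A$ containing both $i_0$ and $i_0^+$ loses both middle terms, and one varies $A$ (or couples with a $C$-equation) so that the remaining sign pattern conflicts with the coarse boundary conditions of Eq.~(\ref{eq: coarse boundary m odd}) at the extremal indices of the form $\langle Y, B, 1 \rangle$ or $\langle Y, B, n \rangle$.

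\textbf{Main obstacle.} The principal difficulty is controlling the extra $k - 1$ terms introduced by $J$, since the sign pattern of $(\langle Y, B, i \rangle)_{i \in [n] \setminus B}$ is not under our direct control. Overcoming this will likely require either an induction on the structure of sign flips in this list, or a simultaneous use of the $C$-equation: its nonnegative coefficients $p_{A, i}(C)$ furnish a complementary nonnegative linear combination of the $\langle Y, B, i \rangle$ equal to zero, which, in the presence of an isolated zero, forces further vanishings in a propagating fashion. It is natural to expect that Part~$2$ is invoked during the proof of Part~$1$ to rule out additional unwanted adjacent vanishings that would otherwise sabotage the sign-counting argument.
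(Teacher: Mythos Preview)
Your framework is correct --- reduce the $\mathcal{Z}$-equation modulo $B$ and exploit positivity of $p_{A\setminus i}(W)$ --- and your parity observation about pairs in $B$ is exactly what the paper uses. However, the ``main obstacle'' you flag is a genuine gap, and your speculations (induction on sign flips, coupling with a $C$-equation, invoking Part~2 inside Part~1) do not resolve it.

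The missing idea is to \emph{first} prove that the sequence $(\langle Y,B,i\rangle)_{i\in[n]}$ has exactly $k$ sign flips (Lemma~\ref{lem: k sign flips}); this preliminary lemma is where both the $C$- and $\mathcal{Z}$-equations are used. Once you know the sign-flip positions $S=(s_1<\cdots<s_k)$, you take $A=S\cup B\cup\{i_0,i_0^+\}$ (for Part~1) or $A=S\cup B\cup\{i_0,i_0+1\}$ (for Part~2). The point is that between consecutive sign flips the elements of $A$ come in pairs --- either pairs from $B$ or the inserted pair $\{i_0,i_0^+\}$ --- so $(-1)^{\#i}$ tracks the sign changes of $\langle Y,B,i\rangle$ perfectly, and \emph{every} nonzero term in the reduced $\mathcal{Z}$-equation has the same sign. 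This forces $\langle Y,B,a\rangle=0$ for all $a\in A$; one then swaps elements of $A$ one at a time to propagate the vanishing to all of $[n]$, contradicting the strict coarse boundary conditions (Eq.~(\ref{eq: coarse boundary m odd})). Parts~1 and~2 are proved independently by this same mechanism; neither invokes the other.

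In short: you cannot hope to choose $J$ by hand so that the extra $k-1$ terms cooperate. The sign-flip lemma tells you that the $k$ flip positions are the canonical choice, and with that choice there is nothing left to control.
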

We first establish, in Section~\ref{subsec: sign flip}, a sign flip
property of the twistor coordinates. Then, in Section~\ref{subsec: proof prop forbidden vanishing},
we deduce from this property, together with the $\mathcal{Z}$-equations,
the proof of Proposition~\ref{cor: forbidden vanishings}.

\subsubsection{\label{subsec: sign flip}A sign flip property}

The following lemma was stated in \cite[Section 5]{arkani2018unwinding} and a sketch of a proof was given. It will be used in what follows. For completeness we provide another proof, which is based upon the $C$-, $\mathcal{Z}$-equations and the positivity of the minors.

\begin{lem}
	\label{lem: k sign flips}Let $Y\in\mathcal{A}_{n,k,m}^{>0}$ and
	let $B=\left(b_{1},b_{1}+1,\dots,b_{r-1},b_{r-1}+1\right)$ be a list
	of $m-1$ integers in $\left[n\right]$. Then, the number of times
	the list of numbers $\left(\left\langle Y,B,i\right\rangle \right)_{i\in\left[n\right]}$
	changes sign (ignoring the zeros) is exactly $k$.
\end{lem}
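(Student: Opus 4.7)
The plan is to combine the $C$-equations~(\ref{The C equations}) and $\mathcal{Z}$-equations~(\ref{The Z equation}) with the strict positivity of the minors of $C\in\grsp{k,n}$ (available since $Y\in\amplio$) and of $\mathcal{Z}\in\matp$, in order to establish matching upper and lower bounds of $k$ on the number of sign changes of $f_i:=\langle Y,B,i\rangle$ as $i$ ranges over $[n]$.

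For the upper bound $\leq k$, I would argue by contradiction. Suppose the sequence has at least $k+1$ sign flips and pick representatives $j_0<j_1<\cdots<j_k$ in $[n]\setminus B$ drawn from successive constant-sign regions of $(f_i)$, so that $\mathrm{sgn}(f_{j_\alpha})$ alternates in $\alpha$. I would then invoke the $C$-equation with $A=(a_1<\cdots<a_{k-1})$ chosen so that $a_\alpha\in(j_\alpha,j_{\alpha+1})$. Using the decomposition $p_{A,i}(C)=(-1)^{k-1-\beta(i)}\,|p_{A\cup\{i\}}(C)|$ with $\beta(i):=|\{a\in A:a<i\}|$ and the strict positivity of $|p_{A\cup\{i\}}(C)|$, the products $p_{A,j_\alpha}(C)f_{j_\alpha}$ all carry a common sign (because $\beta(j_\alpha)=\alpha$ and the signs of $f_{j_\alpha}$ also alternate with $\alpha$). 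The key step is then to show, leveraging positivity together with further constraints coming from the $\mathcal{Z}$-equations applied to the remaining indices, that the leftover contribution $\sum_{i\notin A\cup\{j_0,\ldots,j_k\}}p_{A,i}(C)\,f_i$ cannot cancel the main contribution, yielding a contradiction with the $C$-equation.

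For the lower bound $\geq k$, the $\mathcal{Z}$-equation applied to any window $A'\in\binom{[n]}{k+m+1}$ of consecutive integers disjoint from $B$ reads $\sum_{i\in A'}(-1)^{\#i}p_{A'\setminus i}(W)\,f_i=0$ with strictly positive coefficients $p_{A'\setminus i}(W)$. If the global sign-change count of $(f_i)$ were strictly less than $k$, then a pigeonhole argument would produce such a window on which $(-1)^i f_i$ has constant sign, contradicting the positivity of the $\mathcal{Z}$-equation coefficients.

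The main obstacle is the upper bound: a direct interleaving of $A$ with the sign-flip representatives only forces the products $p_{A,j_\alpha}(C)f_{j_\alpha}$ to share a common sign on the selected subset, and does not by itself preclude the remaining terms from collectively cancelling this contribution. Closing this gap would require careful sign bookkeeping on each interval of $[n]\setminus A$, together with an iterated appeal to the $\mathcal{Z}$-equations to rule out any such hypothetical cancellation. A secondary technical point is handling vanishings $f_i=0$, which by Lemma~\ref{lem: Consequence Cauchy Binet} are cut out by polynomial conditions on the entries of $C$; small perturbations of $C$ within $\grsp{k,n}$ therefore reduce the analysis to the generic nonvanishing case without altering the sign-change count.
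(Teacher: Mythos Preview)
Your approach swaps the roles of the $C$- and $\mathcal{Z}$-equations relative to the paper, and this swap is why both halves break down. The $C$-equation has $|A|=k-1$, so the signs of $p_{A,i}(C)$ cut $[n]\setminus A$ into at most $k$ intervals on each of which $p_{A,i}(C)$ has constant sign; this makes it the natural tool for the \emph{lower} bound. If there are only $q<k$ flips, one takes $A$ to contain the $q$ flip positions (padded with $1,2,\ldots$), so that on every interval both $p_{A,i}(C)$ and $f_i$ have constant sign, and the $C$-equation forces all $f_i$ to vanish, contradicting the strict coarse boundary inequalities. Conversely the $\mathcal{Z}$-equation has $|A|=k+m+1$, which is suited to the \emph{upper} bound: when $q>k$, one takes $A$ to consist of the flip positions $s_1,\ldots,s_q$ themselves (padded when $q<k+m+1$ with pairs $\{b_i,b_i+1\}$ from $B$, which contribute zero and come in consecutive pairs so do not disturb the alternation of $(-1)^{\#i}$), making $(-1)^{\#i}f_i$ constant in sign on $A$.

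Your specific arguments fail for exactly this structural reason. For the lower bound, you claim that fewer than $k$ flips would, by pigeonhole, produce a window of $k+m+1$ consecutive integers on which $(-1)^i f_i$ has constant sign. But the implication runs backwards: few flips means $f_i$ is constant on long stretches, so $(-1)^i f_i$ \emph{alternates} there. No such window can exist, and in fact a window on which $(-1)^i f_i$ is constant would witness $k+m$ sign flips. For the upper bound, you correctly observe that $k-1$ separators cannot isolate more than $k$ sign regions, so the leftover terms in the $C$-equation need not share the sign of your chosen $f_{j_\alpha}$; your proposed remedy (``further constraints coming from the $\mathcal{Z}$-equations'') is not a mechanism but a placeholder. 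There is no way to make the residual $C$-equation terms cooperate without essentially carrying out the $\mathcal{Z}$-equation argument directly, which is what the paper does.
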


\begin{rem}
	If $m=2r$ is even, we can prove in the same way that if \\$B=\left(1,b_{1},b_{1}+1,\dots,b_{r-1},b_{r-1}+1\right)$
	or $B=\left(b_{1},b_{1}+1,\dots,b_{r-1},b_{r-1}+1,n\right)$, then
	the number of times the list $\left(\left\langle Y,B,i\right\rangle \right)_{i\in\left[n\right]}$
	changes sign is exactly $k$.
\end{rem}

We now prove the sign flip property.

\begin{proof}
	[Proof of Lemma \ref{lem: k sign flips}]The proof splits in two steps.
		
	\paragraph{Step~$1$.}
		
	In this step we use the $C$-equations and the positivity of $C,$ to deduce that
	the list $\left(\left\langle Y,B,i\right\rangle \right)_{i\in\left[n\right]},$ either
	changes sign at least $k$ times, or it is a list of zeros. The second possibility, that
	$\left(\left\langle Y,B,i\right\rangle \right)_{i\in\left[n\right]}$
	is a list of zeros, is ruled out since the first and last element of the list do not satisfy the strict inequalities of the coarse boundary conditions (see Eq.~(\ref{eq: coarse boundary m odd})). We emphasize that a more general sign flip statement has already been proven using different tools in \cite[Corollary~3.21]{karp2019amplituhedron}, more details are provided in Remark~\ref{rem: sign flips by Karp and Williams}. 
		
	Suppose $\left(\left\langle Y,B,i\right\rangle \right)_{i\in\left[n\right]}$
	changes sign $q$ times with $q<k$. Denote by $s_{1}<\dots<s_{q}$
	the positions of the sign flips. More precisely, let $s_{0}$ be the smallest
	integer such that $\left\langle Y,B,s_{0}\right\rangle \neq0$ and set
	\[
		s_{i}=\min\left(s\in\left[n\right]\mid s>s_{i-1}\ {\rm and}\ {\rm sign}\left(s\right)=-{\rm sign}\left(s_{i-1}\right)\right).
	\]
	Define $A=\left(a_{1},\dots,a_{k-1}\right)$ by
	\[
		a_{j}={\rm max}\left(j,s_{j-\left(k-q\right)+1}\right),\quad{\rm for}\quad1\leq j\leq k-1,
	\]
	with the convention that $s_{j}=0$ if $j\leq0$. More explicitly,
	let $j_{0}$ be the smallest index such that $a_{j_{0}}\neq j_{0}$,
	in this case $a_{j}=s_{j-\left(k-q\right)+1}$ for $j\geq j_0$, and the list
	$A$ is 
	\[
		A=\left(1,\dots,j_{0}-1,s_{j_{0}-\left(k-q\right)+1},\dots,s_{q}\right).
	\]
	Thus, the $C$-equation associated to $A$ and $B$ is
	\begin{equation}
		\sum_{i=a_{j_{0}}+1}^{a_{j_{0}+1}-1}p_{A,i}\left(C\right)\left\langle Y,B,i\right\rangle +\sum_{i=a_{j_{0}+1}+1}^{a_{j_{0}+2}-1}p_{A,i}\left(C\right)\left\langle Y,B,i\right\rangle +\cdots+\sum_{i=a_{k-1}+1}^{n}p_{A,i}\left(C\right)\left\langle Y,B,i\right\rangle =0, 
		\label{eq: C eq in the proof}
	\end{equation}
	where the summation can be empty if two sign flips are successive. The LHS is a sum of terms which are all nonnegative (or all nonpositive).
	Indeed, since $a_j$ corresponds to the position of a sign flip for $j\geq j_0$, the twistor coordinates
	$\left\langle Y,B,i\right\rangle $ for $i\in\left]a_{j},a_{j+1}\right[$ and $j\geq j_0$
	are all nonnegative (resp. all nonpositive). Moreover, the twistor coordinates $\left\langle Y,B,i\right\rangle $ for  $i\in\left]a_{j+1},a_{j+2}\right[$
	and for $i\in\left]a_{j-1},a_{j}\right[$ are all nonpositive (resp. all nonnegative). Since all the minors of
	$C$ are positive, the sign coming from the antisymmetry of the determinant $p_{A,i}\left(C\right)$ exactly compensates the change of sign
	of the twistor coordinates. 
	Thus all the terms of the LHS of Eq.~(\ref{eq: C eq in the proof}) vanish, and since the Pl\"ucker coordinates of $C$ are nonzero,  we obtain
	\[
		\left\langle Y,B,i\right\rangle =0,\quad {\rm for}\quad i\in\left[n\right]\backslash A.
	\]
	Then, fix $j_{0}\in\left[n\right]\backslash A$. For each $a\in A$, we 
	define the list $\tilde{A}$ from $A$ by first replacing $a$ by
	$j_{0}$ and then sorting the list in ascending order. The $C$-equation
	associated to $\tilde{A}$ and $B$ reads
	\[
		p_{\tilde{A},a}\left(C\right)\left\langle Y,B,a\right\rangle =0.
	\]
	Since $p_{\tilde{A},a}\left(C\right)\neq0$, we deduce that $\left\langle Y,B,a\right\rangle =0$
	for $a\in A$. Thus $\left(\left\langle Y,B,i\right\rangle \right)_{i\in\left[n\right]}$
	is a list of zeros.
	\paragraph{Step~$2$.}
		
	In this step we use the $\mathcal{Z}$-equations and the positivity of the minors of $\mathcal{Z},$ to show that the list $\left(\left\langle Y,B,i\right\rangle \right)_{i\in\left[n\right]}$ changes sign at most $k$ times. 
	
	The $\mathcal{Z}$-equations are valid for $n>k+m$; we first consider the case $k+m=n$. In this case, the list $\left(\left\langle Y,B,i\right\rangle \right)_{i\in\left[n\right]}$ has the same number of sign flips as $\left(\left\langle Y,B,i\right\rangle \right)_{i\in\left[n\right]\backslash B}$ since we only remove the zeros $\left\langle Y,B,i\right\rangle=0,\,i \in B$. Then the list $\left(\left\langle Y,B,i\right\rangle \right)_{i\in\left[n\right]\backslash B}$ is of length $k+1$ so its maximum number of sign flips is $k$. 
		
	Now suppose $n>k+m$ and $\left(\left\langle Y,B,i\right\rangle \right)_{i\in\left[n\right]}$ changes sign $q$ times with $q>k$. As in Step~$1$, we denote $s_{0}$ the smallest integer such that $\left\langle Y,B,s_{0}\right\rangle \neq0$, and by $s_{1}<\dots<s_{q}$ the positions of the sign flips. We introduce the list $$ A=\left(s_{0},\dots,s_{k+1}\right)\cup B.$$ Once again, the union of two lists is defined by first taking the union of their elements and then sorting them in ascending order. Since $\left\langle Y,B,i\right\rangle = 0$ when $i \in B$, an element of $B$ cannot be equal to $s_i$ for $i=0,\dots,k+1$, thus the list $A$ is of length $k+m+1$. Now the $\mathcal{Z}$-equation associated to $A$ is  
		\begin{equation}
		\sum_{i\in A}\left(-1\right)^{\#i}p_{A\backslash i}\left(W\right)\langle Y,B,i\rangle=0.\label{eq: Z eq in the proof}
		\end{equation} 
		This is a sum of numbers all nonnegative or all nonpositive. Indeed, first the matrix $\mathcal{Z}$, and
		hence $W$, has positive maximal minors thus $p_{A\backslash i}\left(W\right)$ is positive. Moreover, the terms of the sum vanish for $i \in B$ since in that case $\left\langle Y,B,i\right\rangle = 0$. Now, let $i$ go through $A \backslash B$, since the elements of $B$ come by pairs of consecutive integers it follows that the sign  $\left(-1\right)^{\#i}$ changes at each element of $A \backslash B$, however this change of sign is exactly compensated by the sign flip of $\left\langle Y,B,i\right\rangle$ at each element of $A\backslash B= \left( s_{1},\dots, s_{k+1}\right)$. We conclude that the sum of Eq~(\ref{eq: Z eq in the proof}) is a sum of terms of the same sign and thus they all vanish. Since the Pl\"ucker coordinates of $W$ are
		nonzero, we obtain
	\[
		\left\langle Y,B,a\right\rangle =0,\quad{\rm for}\quad a\in A.
	\]
	This contradicts the assumption $\left\langle Y,B,s_i \right \rangle \neq 0$ for $i=0,\dots,k+1$. This ends the proof.
\end{proof}
\begin{rem}
	\label{rem: sign flips by Karp and Williams}
	The assertion and proof of Step~$1$ is actually valid in a more general context: $B$ can be any element of $\sub{m-1}$, but in this case $\left(\left\langle Y,B,i\right\rangle \right)_{i\in\left[n\right]}$ can also be a list of zeros. We emphasize that Karp and Williams already proved a more general statement: this sign flip property is valid for any point of the interior of the amplituhedron, see \cite[Corollary~3.21]{karp2019amplituhedron}. We recall that $\amplio$ is included into the interior of the amplituhedron, see \cite[Lemma 9.4]{galashin2020parity}.  
\end{rem}
\begin{rem}
	\label{rem: max sign flips} The assertion and proof of Step~$2$ is still correct for $Y\in\gr{k,k+m}$.
	Indeed, we do not use the positivity of $C$ in the proof (or in the
	proof of the $\mathcal{Z}$-equations) and any element of $Y\in\gr{k,k+m}$
	can be written $Y=\tilde{C}\mathcal{Z}$ for $\tilde{C}\in\gr{k,n}$.
\end{rem}

\subsubsection{\label{subsec: proof prop forbidden vanishing}Proving Proposition~\ref{cor: forbidden vanishings}}

We begin with the first statement. According to Lemma~\ref{lem: k sign flips},
the list $\left(\left\langle Y,B,i\right\rangle \right)_{i\in\left[n\right]}$
has exactly $k$ sign flips. Denote by $S=\left(s_{1}<\cdots<s_{k}\right)$
the positions of the sign flips, defined as in the proof of Lemma~\ref{lem: k sign flips}.
We define the set $A$ of size $k+m+1$ to be the union of $S,B$
and $\left\{ i_{0},i_{0}^{+}\right\} ,$ where $i_{0}$ is as in the
statement of the proposition. The $\mathcal{Z}$-equation associated
to $A$ and $B$ is
\[
	\sum_{i\in A}\left(-1\right)^{\#i}p_{A\backslash i}\left(W\right)\langle Y,B,i\rangle=0.
\]
We now justify that all the terms of the sum are nonnegative or nonpositive. Since
the matrix $\mathcal{Z}$, and hence $W$, is positive, the Pl\"ucker
coordinates $p_{A\backslash i}\left(W\right)$ have a constant sign
for $i\in A$. We show that the change of sign $\left(-1\right)^{\#i}$
exactly compensates the change of sign of $\langle Y,B,i\rangle$.
First, by definition of $i_{0}^{-},i_{0}$ and $i_{0}^{+}$ the twistor coordinates $\left\langle Y,B,i\right\rangle $ for $i\in\left[i_{0}^{-},i_{0}^{+}\right]$
are all nonnegative or all nonpositive. Thus, $\left[i_{0}^{-},i_{0}^{+}\right]$
is contained between two successive sign flips, say $s_{j}$ and $s_{j+1}$.
More precisely, we have $\left[i_{0}^{-},i_{0}^{+}\right]\subset\left[s_{j},s_{j+1}\right[$
with ${\rm sign}\left(s_{j}\right)={\rm sign}\left(i_{0}^{-}\right)={\rm sign}\left(i_{0}^{+}\right)$.
Since $\left\langle Y,B,i\right\rangle =0$ for $i\in B$, we deduce
that when $i$ goes through $A$, the sign of $\left\langle Y,B,i\right\rangle $
only changes at $i=s_{1},\dots,s_{k}$. Moreover, between two successive
sign flips, there always are an even number of elements of $A$; it
can be pairs of successive elements of $B$ or the pair $\left(i_{0},i_{0}^{+}\right)$.
Hence the terms on the LHS of the $\mathcal{Z}$-equation are all nonnegative or all nonpositive. Thus, they all vanish. Moreover, by the positivity
of $W$, the Pl\"ucker coordinates of $W$ are nonzero and we deduce
that 
\[
	\left\langle Y,B,a\right\rangle =0,\,\,{\rm for\,}a\in A.
\]
	
Once again, we modify the elements of $A$ one by one; let $j\in\left[n\right]\backslash A$
and define $\tilde{A}$ from $A$ by first replacing a given element $a\in A$
by $j$ and then sorting the list in ascending order. Then, the $\mathcal{Z}$-equation
associated to $\tilde{A}$ and $B$ reads (up to a sign)
\[
	p_{A\backslash j}\left(W\right)\left\langle Y,B,j\right\rangle =0.
\]
Then, simplifying by the nonzero
Pl\"ucker coordinate, we obtain $\left\langle Y,B,j\right\rangle =0$. Thus,  \\$\left(\left\langle Y,B,i\right\rangle \right)_{i\in\left[n\right]}$
is a list of zeros, this contradicts $\left\langle Y,B,i_{0}^{-}\right\rangle \neq0$. 
		
We now prove the second statement. Denote by $i_{0}$ and $i_{0}+1$
the positions of two of the consecutive zeros and by $S=\left(s_{1}<\cdots<s_{k}\right)$
the positions of the sign flips. Similarly to the previous item, we use the $\mathcal{Z}$
equation with $A=S\cup B\cup\left\{ i_{0},i_0+1\right\} $ and deduce
that $\left(\left\langle Y,B,i\right\rangle \right)_{i\in\left[n\right]}$
is the zero list. This is impossible by the strict coarse boundary conditions given in Eq. (\ref{eq: coarse boundary m odd}).
	
\subsection{Properties of simplices containing the origin of $V_{Y}$\label{subsec:Properties-of-simplices}}
	
The purpose of this section is to use the constraints on the twistor coordinates obtained in the previous section to exclude some configurations of simplices containing the origin $\pi_{Y}\left(Y\right)$ of the quotient space $V_Y=\mathbb{R}^{k+m}/Y$ defined in Section~\ref{subsec: the crossing number}. The first two sections introduce some terminology for
cells and vertices. Then, in Section~\ref{subsec: first properties of simplices},
we obtain two fundamental lemmas describing the behavior of the simplices
containing the origin. At this stage, we give an idea of the proof
of Theorem~\ref{thm: crossing is constant} in a simplified context.
After some preparatory lemmas in Section~\ref{subsec:Parent simplices of a good cell},
we establish a refined version of these lemmas in Section~\ref{subsec:Configuration of cells adjacent}
which describes the configuration of cells around a cell containing
the origin.
	
\paragraph{Abuse of terminology.}
In the following, we will only use simplices involved in the count of the crossing number, that is simplices of type 
\[
	S\left(i_{1},i_{1}+1,\dots,i_{r},i_{r}+1\right),
\]
for $\left(i_{1},i_{1}+1,\dots,i_{r},i_{r}+1\right)$ a list
of strictly ascending integers between $1$ and $n$. The terminology simplex will only refer to these simplices. 
	
\subsubsection{Cells terminology}
	
\begin{defn}
	[Descendant and ancestor cells. Lineage] Let $\mathfrak{C}$ be a cell as defined in Section~\ref{subsec: the crossing number}, it is uniquely associated to the set $I$ of indices of its vertices. A \emph{descendant of $\mathfrak{C}$} is a cell associated to a strict subset of $I$. 
	If $S$ is a simplex, then a descendant of $S$ is a descendant of the $m$-cell of $S$.
			
	A cell $\mathfrak{C}^{'}$ is an \emph{ancestor} of $\mathfrak{C}$ if $\mathfrak{C}$ is a descendant of $\mathfrak{C}^{'}$. A simplex $S$ is an \emph{ancestor} of $\mathfrak{C}$ if $\mathfrak{C}$ is a descendant of $S$.
			
	The \emph{lineage of a cell $\mathfrak{C}$} is the set containing the descendent cells of $\mathfrak{C}$, the ancestor cells of $\mathfrak{C}$ and $\mathfrak{C}$. 
\end{defn}
\begin{notation}
	Let $\mathfrak{C}$ be a cell. Denote by $S_{\mathfrak{C}}$ the set of simplices which are ancestors of $\mathfrak{C}$. Denote by $L_{\mathfrak{C}}$ the set of cells of the simplices in $S_{\mathfrak{C}}$. Denote by $U_{\mathfrak{C}}\subset V_Y$ the set of points of the cells in $L_{\mathfrak{C}}$. 
\end{notation}
\begin{defn}
	[Boundary cells.  Internal cells] A \emph{boundary cell} is either
	\begin{itemize}
		\item the cell $\mathfrak{C}\left(1,i_{1},i_{1}+1,\dots,i_{r-1},i_{r-1}+1\right)$,
		      where $\left(1,i_{1},i_{1}+1,\dots,i_{r-1},i_{r-1}+1\right)$ is a
		      list of strictly ascending integers smaller or equal to $n$, or
		\item the cell $\mathfrak{C}\left(i_{1},i_{1}+1,\dots,i_{r-1},i_{r-1}+1,n\right)$,
		      where $\left(i_{1},i_{1}+1,\dots,i_{r-1},i_{r-1}+1,n\right)$ is a
		      list of strictly ascending integers greater or equal to $1$, or
		\item a descendant of one of these cells.
	\end{itemize}
	An \emph{internal cell} is a cell which is not a boundary cell.
\end{defn}
It follows from the coarse boundary conditions, Eq.~(\ref{eq: coarse boundary m odd}),
that if $Y\in\amplio$, then the origin $\pi_{Y}\left(Y\right)$ cannot
belong to a boundary cell. Hence the crossing number counts the number
of internal cells containing the origin of $V_{Y}$.
\begin{example}
	When $m=3$, the vertex $Z_{i}$ for $i=1,\dots,n$ is a boundary cell, the cell $\mathfrak{C}\left(i,i+1\right)$ for $i=1,\dots,n-1$
	is a boundary cell and the cell $\mathfrak{C}\left(i,j\right)$, with $j>i+1$,
	is an internal cell whenever $j<n-1$ and $i>2$.
\end{example}
	
\subsubsection{Conjugate vertex}
\begin{lem}
	\label{lem: conjugate vertex} Let $I \in \sub m$  and $i\in\left[n\right]$ be such that $S(I,i)$ is a simplex, and such that its descendent $(m-1)$-cell $\mathfrak{C}\left(I\right)$ is nonempty and is internal. Then, there exists a unique index $\bar{i}\neq i$ such that $S\left(I,\bar{i}\right)$ is an ancestor simplex of $\mathfrak{C}\left(I\right)$.
	
	
	
\end{lem}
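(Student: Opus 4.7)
My plan is to convert the claim into a purely combinatorial statement about the ``block'' decomposition of $I$, i.e., the partition of $I$ into maximal runs of consecutive integers. A simplex's index set $\{i_1, i_1+1, \dots, i_r, i_r+1\}$ is exactly a subset of $[n]$ whose blocks all have even length at least $2$: each pair is a block of size $2$, and two pairs can only merge into a longer block when adjacent, producing blocks of size $4, 6, \dots$; conversely, any such set decomposes uniquely into the required consecutive pairs. So ancestor simplices $S(I, d)$ correspond bijectively to elements $d \in [n] \setminus I$ such that every block of $I \cup \{d\}$ has even length.

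I would then do the parity bookkeeping. Adding one element $d$ to $I$ changes the block structure in a single place: it creates a singleton block (if $d$ has no neighbors in $I$), extends one block by one (if $d$ is adjacent to exactly one block of $I$), or merges two blocks with a bridging element (if adjacent on both sides). A direct case analysis shows that in each of these possibilities the number of odd-length blocks flips by $\pm 1$. Since $|I| = m = 2r - 1$ is odd, the number of odd blocks of $I$ is itself odd, hence at least $1$; and since by hypothesis there exists an $i$ which zeros this count, $I$ must have \emph{exactly} one odd-length block, which I call $B$. Another small check shows that the admissible $d$ are exactly $\min(B) - 1$ and $\max(B) + 1$: only these change the parity of $B$ without introducing a new odd block, either by extending $B$ alone into an even block, or by merging $B$ with an even neighboring block to produce an even block again.

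To finish, I would use the internality of $\mathfrak{C}(I)$ to ensure both candidates lie in $[n]$. The boundary cells of size $m$ are precisely those of the form $\mathfrak{C}(1, i_1, i_1+1, \dots, i_{r-1}, i_{r-1}+1)$ or $\mathfrak{C}(i_1, i_1+1, \dots, i_{r-1}, i_{r-1}+1, n)$. Translating these into block language, the first form is exactly the case $\min(B) = 1$ (whether $i_1 = 2$, which absorbs $1$ into the first pair's block and makes $|B| \geq 3$ odd, or $i_1 > 2$, in which case $B = \{1\}$), and the second is $\max(B) = n$. Internality therefore forces $\min(B) \geq 2$ and $\max(B) \leq n - 1$, so both $\min(B) - 1$ and $\max(B) + 1$ belong to $[n]$. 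They are distinct since $\max(B) \geq \min(B)$, and they are the only two admissible ancestor simplices; given the one denoted $i$, the other $\bar{i}$ is unique.

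The main obstacle is the dictionary between the block-theoretic description and the explicit ``$1, \text{pair}, \text{pair}, \dots$'' or ``$\text{pair}, \dots, \text{pair}, n$'' form of the boundary cells: one has to verify case by case how appending $1$ (resp.\ $n$) to an index set of $r - 1$ disjoint consecutive pairs yields exactly those $I$ whose unique odd block contains $1$ (resp.\ $n$). Once this dictionary is in place, the rest of the argument is just the parity bookkeeping sketched above.
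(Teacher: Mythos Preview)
Your argument is correct and follows essentially the same route as the paper's: both locate the unique odd run $B$ in $I$ and identify the two admissible extra vertices as $\min(B)-1$ and $\max(B)+1$, with internality ensuring both lie in $[n]$. The paper phrases this via the pair decomposition of $I\cup\{i\}$ and writes $\bar i$ as an explicit $\min$/$\max$ over $I^c$; your block-parity bookkeeping reaches the same two candidates and makes the uniqueness step cleaner than the paper's one-line justification. One small caveat: the paper's literal definition of boundary cell adds the side condition that the type-1 indices be strictly less than $n$ (and dually for type 2), so a set like $I=\{1,n-1,n\}$ is technically internal yet has only one ancestor simplex; this edge case is a minor inconsistency in the paper's definitions (it is covered by the coarse boundary conditions) and affects the paper's own proof in the same way, so it is not a defect specific to your argument.
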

\begin{notation}
	When $\mathcal{Z}$ and $Y$ are understood, the vertex $i$ refers to the vertex $Z_{i}$. 
\end{notation}
\begin{defn}
	[Conjugate vertex]The vertex $\bar{i}$ is called the \emph{conjugate
		vertex of $i$ relative to the simplex $S\left(I,i\right)$}.
\end{defn}
\begin{example}
	In Figure~\ref{fig:good and bad configurations}, the conjugate vertex
	of $i$ relative to $S\left(i,i+1,j,j+1\right)$ is $\bar{i}=i+2$.
\end{example}
\begin{proof}
	We first relabel the simplices of $S\left(I,i\right)$ by pairs of
	consecutive indices: let $j_{1},\dots,j_{r}$ in $\left[n\right]$
	such that $\left\{ j_{1},j_{1}+1,\dots,j_{r},j_{r}+1\right\} =I\cup\left\{ i\right\} $.
	\begin{itemize}
		\item If $i=j_{a}$ for $1\leq a\leq r$. Then
		      \[
		      	\bar{i}:=\min\left\{ I^{c}\cap\left\{ j_{a}+2,\dots,n\right\} \right\} ,
		      \]
		      where $I^{c}$ is the complement of $I$ in $\left[n\right]$, is
		      the conjugate vertex to $i$ relative to $S\left(I,i\right)$. Indeed,
		      first $\bar{i}$ exists since $\mathfrak{C}\left(I\right)$ is an internal cell. Then,
		      $S\left(I,\bar{i}\right)$ is a simplex: for any $1\leq a\leq r$,
		      introduce
		      \begin{align*}
		      	k_{a} & :=j_{a}+1{\rm \,\,\,\,\,if\,\,}j_{a}\in\left[i,\bar{i}\right], \\
		      	k_{a} & :=j_{a}{\rm \,\,\,\,\,\,\,\,\,\,\,\,\,\,\,\,otherwise},        
		      \end{align*}
		      then $\left\{ k_{1},k_{1}+1,\dots,k_{r},k_{r}+1\right\} $  are the
		      indices of vertices of $S\left(I,\bar{i}\right)$. Finally, no other
		      simplex can be an ancestor of $\mathfrak{C}\left(I\right)$ since it is a $\left(m-1\right)$-cell,
		      hence $\bar{i}$ is unique.
		\item If $i=j_{a}+1$ for $1\leq a\leq r$. Then
		      \[
		      	\bar{i}:=\max\left(\left\{ 1,\dots,i_{a}-1\right\} \cap I^{c}\right)
		      \]
		      is, for similar reasons, the conjugate vertex to $i$ relative to
		      $S\left(I,i\right)$.
	\end{itemize}
\end{proof}

\subsubsection{First properties of simplices containing the origin of $V_{Y}$\label{subsec: first properties of simplices}}
	
The purpose of the section is to deduce from the constraints on the
twistor coordinates obtained in Section~\ref{subsec: Consequences of C and Z}
two properties of the simplices containing the origin. From these
two properties we can already understand why the crossing number is constant
on $\amplio$ as explained at the end of the section.
\begin{lem}
	[Full-dimensional simplex]\label{lem: full-dimensional}Let $Y\in\amplio$.
	Each simplex with a cell containing $\pi_{Y}\left(Y\right)$ is full
	dimensional.
\end{lem}
In particular, for an element $Y\in\amplio$ the crossing number corresponds
to the number of simplices containing the origin $\pi_{Y}\left(Y\right)$
with the convention that if $\pi_{Y}\left(Y\right)$ is in a cell
belonging to several simplices, it is counted once.
\begin{proof}
	Suppose that $S\left(i_{1},i_{1}+1,\dots,i_{r},i_{r}+1\right)$ is a flat simplex containing the origin $\pi_{Y}\left(Y\right)$. Then, by Remark~\ref{rem: det in projected space and twistors}, for any
	list $L$ of $m$ elements of $\left\{ i_{1},i_{1}+1,\dots,i_{r},i_{r}+1\right\} $,
	we have $\left\langle Y,L\right\rangle =0$. In particular, let $B=\left(i_{1},i_{1}+1,\dots,i_{r-1},i_{r-1}+1\right)$,
	then
	\[
		\left\langle Y,B,i_{r}\right\rangle =0=\left\langle Y,B,i_{r}+1\right\rangle .
	\]
	This is forbidden by the second assertion of Proposition~\ref{cor: forbidden vanishings}.
\end{proof}
Let $\mathfrak{C}$ be a $\left(m-1\right)$ internal cell. Denote by $I$ the list of indices of vertices of $\mathfrak{C}$. By definition of a $\left( m-1 \right)$ cell, it is the descendant of a least one simplex, say $S\left(I,i\right)$, for $i \in [n] \backslash I$.  Then by Lemma~\ref{lem: conjugate vertex}, $\mathfrak{C}$ is a descendant cell of exactly two simplices $S\left(I,i\right)$ and $S\left(I,\bar{i}\right)$. Furthermore,
the hyperplane $H_{\mathfrak{C}}$ containing $\mathfrak{C}$ divides $V_{Y}$ into two
open half-spaces.
\begin{lem}
	[Main lemma]\label{lem: main lemma}Let $Y\in\amplio$. If $\pi_{Y}\left(Y\right)$
	is contained in $\mathfrak{C}$ or a descendant of $\mathfrak{C}$, then $Z_{i}$ and $Z_{\bar{i}}$
	must belong to different open half-spaces relative to $H_{\mathfrak{C}}$.
\end{lem}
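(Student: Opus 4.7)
The plan is to reduce the claim to a sign comparison of two twistor coordinates, delivered directly by the forbidden-vanishing Proposition~\ref{cor: forbidden vanishings}. Since every cell or descendant of $\mathfrak{C}$ lies in the affine span of $Z_{I}$, the hypothesis that $\pi_{Y}(Y)$ lies in $\mathfrak{C}$ or in a descendant forces the affine hyperplane $H_{\mathfrak{C}}$ to pass through the origin of $V_{Y}$; in other words, $H_{\mathfrak{C}}$ is linear. Equivalently, by Remark~\ref{rem: det in projected space and twistors}, $\langle Y, I\rangle = 0$.

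Writing $I \cup \{i\} = \{l_{1}, l_{1}+1, \ldots, l_{r}, l_{r}+1\}$ in the paired form, let $j_{0} \in I$ be the partner of $i$ in this pairing (so $j_{0} = i \pm 1$), and set $B := I \setminus \{j_{0}\}$. Then $B$ has the paired shape required by Proposition~\ref{cor: forbidden vanishings}, and $\langle Y, B, j_{0}\rangle = \pm\langle Y, I\rangle = 0$. Moreover $j_{0} \in [2, n-1]$: if $j_{0}$ were $1$ then $I = \{1\} \cup (\text{pairs})$ and $\mathfrak{C}$ would be a boundary cell, and symmetrically for $j_{0} = n$; either case contradicts the internality of $\mathfrak{C}$. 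Unpacking the formulas in Lemma~\ref{lem: conjugate vertex}, the immediate neighbors $j_{0}^{-}, j_{0}^{+}$ of $j_{0}$ inside $[n] \setminus B$ are exactly $\{i, \bar{i}\}$ in some order; for instance, when $i = l_{a}$ and $j_{0} = l_{a}+1$ one finds $j_{0}^{-} = l_{a} = i$ and $j_{0}^{+} = \min(I^{c} \cap [l_{a}+2, n]) = \bar{i}$.

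Applying Proposition~\ref{cor: forbidden vanishings} with $i_{0} = j_{0}$ then yields that $\langle Y, B, i\rangle$ and $\langle Y, B, \bar{i}\rangle$ are both nonzero (by part 2) and have opposite signs (by part 1). The geometric translation is immediate: nonvanishing of $\langle Y, B, i\rangle$ together with Remark~\ref{rem: det in projected space and twistors} forces $Z_{B}$ to consist of $m-1$ linearly independent vectors, and being contained in the $(m-1)$-dimensional linear space $H_{\mathfrak{C}}$, it spans $H_{\mathfrak{C}}$. Hence the sign of $\det_{\mathcal{B}}(Z_{B}, P)$ records on which side of $H_{\mathfrak{C}}$ a point $P \in V_{Y}$ lies, and by Remark~\ref{rem: det in projected space and twistors} this sign agrees with the sign of $\langle Y, B, P\rangle$ up to one global sign common to $P = Z_{i}$ and $P = Z_{\bar{i}}$. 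The opposite signs of $\langle Y, B, i\rangle$ and $\langle Y, B, \bar{i}\rangle$ thus force $Z_{i}$ and $Z_{\bar{i}}$ to lie on opposite sides of $H_{\mathfrak{C}}$, as required.

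The bulk of the technical content is already carried by Proposition~\ref{cor: forbidden vanishings}, and the descendant case requires no separate treatment: the potential worry that $Z_{B}$ could degenerate to rank $< m-1$ (which would occur if the orphan $j_{0}$ were outside the face of $\mathfrak{C}$ supporting the origin) is ruled out automatically by the nonvanishing of $\langle Y, B, i\rangle$ established above. Conceptually this is the main point to keep an eye on, but it settles itself at no extra cost.
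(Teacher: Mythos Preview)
Your proof is correct and follows essentially the same route as the paper's: both reduce to Proposition~\ref{cor: forbidden vanishings} applied with $B=I\setminus\{i_0\}$ for an index $i_0\in I$ lying between $i$ and $\bar{i}$, using the vanishing $\langle Y,B,i_0\rangle=0$ to force $\langle Y,B,i\rangle$ and $\langle Y,B,\bar{i}\rangle$ to be nonzero with opposite signs. Your version is marginally more streamlined---you argue directly rather than by contradiction, make the specific choice $i_0=j_0$ (the pair-partner of $i$), and invoke part~2 of the Proposition in place of the separate full-dimensional Lemma~\ref{lem: full dimensional}---but the substance is identical.
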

An example of configurations of simplices for $m=3$ is given in Figure~\ref{fig:good and bad configurations}.
In this example $I=\left\{ i+1,j,j+2\right\} $ and $\bar{i}=i+2$.
If the origin belongs to $\mathfrak{C}\left(i+1,j,j+1\right)$ or a descendant of
this cell (in red on the figures), then the simplices can only be
in the configuration of Figure~\ref{fig:sub2}.
	
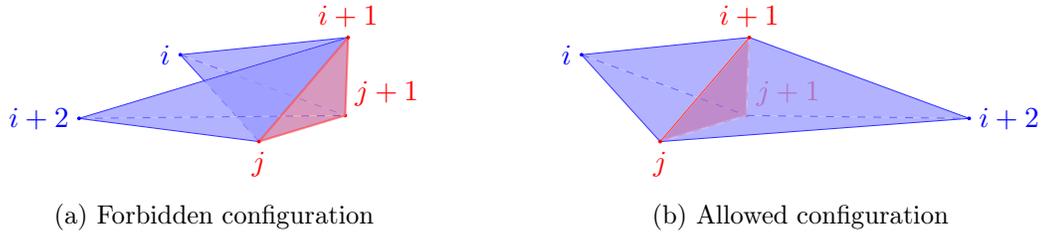
\begin{figure}[h]
	\centering
	\begin{subfigure}{.5\textwidth}
		\centering
		\begin{tikzpicture}
			\coordinate (j+1) at (-0.5,-0.5,-1.2);
			\coordinate (j) at (-0.8,0,1);
			\coordinate (i+1) at (0,1,0);
			\coordinate (k) at (-1.915,0.425,0);
			\filldraw[black] (k) circle (0.2pt);
			\coordinate (i) at (-3,0,-2);
			\coordinate (i+2) at (-3.5,0,0.2);
			\filldraw[red] (j+1) circle (0.5pt) node[anchor=south west]{$j+1$};          \filldraw[color=blue!0, fill=blue!40, rounded corners=0.5pt,opacity=0.5] (i) -- (j+1) -- (i+1) -- cycle;
			\filldraw[color=blue!0, fill=blue!40, rounded corners=0.5pt,opacity=0.5] (i) -- (j+1) -- (j) -- cycle;
			\draw [dashed, color=blue!80] (i) -- (j+1);
			\filldraw[color=blue!0, fill=blue!40, rounded corners=0.5pt,opacity=0.5] (i) -- (j) -- (i+1) -- cycle;
			\draw [color=blue!80] (i) -- (i+1);
			\draw [dashed, color=blue!80] (j) -- (k);
			\draw [color=blue!80] (k) -- (i);
			\filldraw[color=blue!0, fill=blue!40, rounded corners=0.5pt,opacity=0.5] (i+2) -- (j+1) -- (i+1) -- cycle;
			\filldraw[color=blue!0, fill=blue!40, rounded corners=0.5pt,opacity=0.5] (i+2) -- (j+1) -- (j) -- cycle;
			\draw [dashed, color=blue!80] (i+2) -- (j+1);
			\filldraw[color=blue!0, fill=blue!40, rounded corners=0.5pt,opacity=0.5] (i+2) -- (j) -- (i+1) -- cycle;
			\draw [color=blue!80] (i+2) -- (i+1);
			\draw [color=blue!80] (i+2) -- (j);
			\filldraw[color=red!90, fill=red!50, thick, rounded corners=0.5pt, opacity=0.5] (j+1) -- (j) -- (i+1) -- cycle;
			\filldraw[red] (i+1) circle (0.5pt) node[anchor=south]{$i+1$};
			\filldraw[red] (j) circle (0.5pt) node[anchor=north]{$j$};
			\filldraw[blue] (i) circle (0.5pt) node[anchor=east]{$i$};
			\filldraw[blue] (i+2) circle (0.5pt) node[anchor=east]{$i+2$};
		\end{tikzpicture}
		\caption{Forbidden configuration}
		\label{fig:sub1}
	\end{subfigure}%
	\begin{subfigure}{.5\textwidth}
		\centering
		\begin{tikzpicture}
			\coordinate (j+1) at (-0.5,-0.5,-1.2);     \coordinate (j) at (-0.8,0,1);     \coordinate (i+1) at (0,1,0);          \coordinate (i) at (-3,0,-2);     \coordinate (i+2) at (3,0,0.2);          \filldraw[red] (j+1) circle (0.5pt) node[anchor=south west]{$j+1$};          \filldraw[color=blue!0, fill=blue!40, rounded corners=0.5pt,opacity=0.5] (i) -- (j+1) -- (i+1) -- cycle;     \filldraw[color=blue!0, fill=blue!40, rounded corners=0.5pt,opacity=0.5] (i) -- (j+1) -- (j) -- cycle;     \filldraw[color=blue!0, fill=blue!40, rounded corners=0.5pt,opacity=0.5] (i+2) -- (j+1) -- (i+1) -- cycle;     \filldraw[color=blue!0, fill=blue!40, rounded corners=0.5pt,opacity=0.5] (i+2) -- (j+1) -- (j) -- cycle;          \draw [dashed, color=blue!80] (i) -- (j+1);     \draw [dashed, color=blue!80] (j+1) -- (i+2);     \draw [dashed, color=red!90] (j+1) -- (i+1);     \draw [dashed, color=red!90] (j+1) -- (j);          \filldraw[color=red!0, fill=red!80, thick, rounded corners=0.5pt, opacity=0.5] (j+1) -- (j) -- (i+1) -- cycle;          \filldraw[color=blue!0, fill=blue!40, rounded corners=0.5pt,opacity=0.5] (i) -- (j) -- (i+1) -- cycle;     \filldraw[color=blue!0, fill=blue!40, rounded corners=0.5pt,opacity=0.5] (i+2) -- (j) -- (i+1) -- cycle;          \draw [color=red!80] (i+1) -- (j);     \draw [color=blue!80] (i) -- (i+1);     \draw [color=blue!80] (i) -- (j);     \draw [color=blue!80] (i+1) -- (i+2);     \draw [color=blue!80] (j) -- (i+2);          \filldraw[red] (i+1) circle (0.5pt) node[anchor=south]{$i+1$};     \filldraw[red] (j) circle (0.5pt) node[anchor=north]{$j$};          \filldraw[blue] (i) circle (0.5pt) node[anchor=east]{$i$};     \filldraw[blue] (i+2) circle (0.5pt) node[anchor=west]{$i+2$};
		\end{tikzpicture}
		\caption{Allowed configuration}
		\label{fig:sub2}
	\end{subfigure}
	\caption{Two configurations of the simplices $S\left(i,i+1,j,j+1\right)$ and $S\left(i+1,i+2,j,j+1\right)$ ancestors of the cell $\mathfrak{C}\left(i+1,j,j+1\right)$ in $m=3$. In Figure \ref{fig:sub1}, the vertices $i$ and $i+2$ belong to the same side of the plane generated by the cell $\mathfrak{C}\left(i+1,j,j+1\right)$; in Figure \ref{fig:sub2} these vertices belong to different sides.}
	\label{fig:good and bad configurations}
\end{figure}
	
Suppose the origin $\pi_{Y}\left(Y\right)$ belongs to the cell $\mathfrak{C}\left(i+1,j,j+1\right)$.
When $Y$ moves continuously in $\amplio$, then the points $Z_{i}=\pi_{Y}\left(\mathcal{Z}_{i}\right)$
for $1\leq i\leq n$ move continuously. If the configuration of Figure~\ref{fig:sub1}
was allowed, then after a small modification of $Y$, the crossing
number could jump by $+1$ if the origin jumps into the two cells
$\mathfrak{C}\left(i,i+1,j,j+1\right)$ and $\mathfrak{C}\left(i+1,i+2,j,j+1\right)$, or
by $-1$ if the origin jumps out of any cell of the two simplices.
This bad behavior of the crossing number does not happen in the configuration
of Figure~\ref{fig:sub2}. More generally, we prove in this way that
if $\pi_{Y}\left(Y\right)$ avoids cells of codimension $2$ or more,
then the crossing number is constant in $\amplio$. The rest of the
Section~\ref{subsec:Properties-of-simplices} is devoted to making
this argument correct if $\pi_{Y}\left(Y\right)$ belongs to any internal
cell. To do so, we will only use the main lemma and the lemma about
full-dimensional simplices.
\begin{proof}
	First $Z_{i}$ (resp. $Z_{\bar{i}}$) cannot belong to $H_{\mathfrak{C}}$, otherwise the ancestor simplex $S\left(I,i\right)$ (resp. $S\left(I,\bar{i}\right)$) of $\mathfrak{C}$ is not full-dimensional, which is forbidden by Lemma~\ref{lem: full-dimensional}.
			
	Now, suppose that $Z_{i}$ and $Z_{\bar{i}}$ belong to the same open half-space with respect to $H_{\mathfrak{C}}$. Say $i<\bar{i}$, then it follows from the construction of the conjugate vertex in Lemma~\ref{lem: conjugate vertex} that $\left[i+1,\bar{i}-1\right] \subseteq I$ is nonempty. Pick $i_{0}$ in this sequence and define $B=I\backslash\left\{ i_{0}\right\} $.
	Since $\pi_{Y}\left(Y\right)$ belongs to $H_{\mathfrak{C}}$, we have
	\[
		\left\langle Y,B,i_{0}\right\rangle =0.
	\]
	Moreover, since $Z_{i}$ and $Z_{\bar{i}}$ belong to the same open half-space with respect to $H_{\mathfrak{C}}$, we have
	\[
		{\rm sign}\left(\left\langle Y,B,i\right\rangle \right)={\rm sign}\left(\left\langle Y,B,\bar{i}\right\rangle \right)\neq0.
	\]
	But this situation is impossible according to Proposition~\ref{cor: forbidden vanishings}
	(with $i_{0}^{-}=i$ and $i_{0}^{+}=\bar{i}$). The situation is similar
	for $\bar{i}<i$. This proves the lemma.
\end{proof}
	
\subsubsection{Ancestor simplices of an internal cell\label{subsec:Parent simplices of a good cell}}
	
The goal of this section is to prove the following proposition which describes the ancestor simplices of an internal cell in terms of one simplex
and its conjugate vertices. This proposition will only be used for the proof of Proposition~\ref{prop: Config adjacent cells} in the next section.
\begin{prop}
	\label{prop: parent simplices}Let $\mathfrak{C}$ be an internal cell of dimension $d<m$, and let $I=\left(i_{0},\dots,i_{d}\right)$ be the set of indices of its vertices. Let $j_{1},j_{2},\dots,j_{m-d}$ in $\left[n\right]$ be such that $S\left(I,j_{1},j_{2},\dots,j_{m-d}\right)$ is an ancestor simplex of $\mathfrak{C}$. Introduce, for each $1\leq a\leq m-d$, the conjugate vertex $\bar{j}_{a}$ of $j_{a}$ relative to $S\left(I,j_{1},j_{2},\dots,j_{m-d}\right)$. Then, for each choice of $\left(\alpha_{1},\dots,\alpha_{m-d}\right)$ in $\left\{j_{1},\bar{j}_{1}\right\}\times\cdots\times\left\{j_{m-d},\bar{j}_{m-d}\right\}$ such that the elements of $\left(\alpha_{1},\dots,\alpha_{m-d}\right)$ are all pairwise distinct, the simplex
	\[
		S\left(I,\alpha_{1},\dots,\alpha_{m-d}\right)
	\]
	is an ancestor simplex of $\mathfrak{C}$, and moreover each ancestor simplex of $\mathfrak{C}$ is obtained in this way.
\end{prop}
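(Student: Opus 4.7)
The plan is to reduce the proposition to a combinatorial analysis of the pair structure of ancestor simplices, using the explicit shift formulas already recorded in the proof of Lemma~\ref{lem: conjugate vertex}.

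First I would observe that an ancestor simplex $S(I,j_1,\ldots,j_{m-d})$ of $\mathfrak{C}$ corresponds bijectively to a partition of $I\cup\{j_1,\ldots,j_{m-d}\}$ into $r=(m+1)/2$ non--touching consecutive pairs $\{k_s,k_s+1\}$ (i.e. with $k_{s+1}>k_s+1$). Starting from the given reference simplex, I would classify each extra vertex $j_a$ as either (i) an \emph{$I$--partnered} extra, whose pair contains an element of $I$, or (ii) a \emph{free--pair} extra, whose pair contains another extra $j_b$. The conjugate $\bar{j}_a$ obtained by applying Lemma~\ref{lem: conjugate vertex} to the $(m-1)$--cell $\mathfrak{C}(I\cup\{j_1,\ldots,j_{m-d}\}\setminus\{j_a\})$ then admits an explicit description in each case: in case (i) it is the nearest index on the opposite side of the $I$--partner lying outside the current vertex set, while in case (ii) it shifts the free pair by two positions (with intermediate pairs shifted by one, exactly as in the $k_a$--formula from the proof of Lemma~\ref{lem: conjugate vertex}).

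With this dictionary between conjugate--vertex operations and local modifications of the pair structure in place, I would prove the two halves of the proposition. For the forward direction, I plan to argue by induction on the number of positions at which $\alpha_a\neq j_a$: the base case is the reference simplex, the single--swap step is immediate from the very definition of the conjugate, and for simultaneous swaps I would use the pairwise distinctness of $(\alpha_1,\ldots,\alpha_{m-d})$ to exclude collisions between two shifts that would otherwise be forced onto the same index. For the converse direction, given an arbitrary ancestor simplex $S'$ of $\mathfrak{C}$, I would compare its pair partition to that of the reference pair by pair and read off each $\alpha_a$ from how the $a$--th position has been modified relative to the reference.

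The main obstacle will be the bookkeeping around interacting swaps, in particular when several conjugate operations act on pairs sitting close together along $[n]$, or when both members of a free pair are swapped simultaneously; in these situations two nominally legal shifts can try to use the same index, and the distinctness hypothesis is the key safeguard that rules out such conflicting combinations. Verifying that it indeed does so in every configuration boils down to a careful but elementary case analysis of the local geometry of consecutive pairs, together with the explicit form of the conjugate given by the $k_a$--shift of Lemma~\ref{lem: conjugate vertex}.
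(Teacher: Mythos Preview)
Your approach differs substantially from the paper's, and while it may ultimately be salvageable, it has a gap that your proposal does not close.

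The paper proceeds by analyzing the decomposition of $I$ into maximal runs of consecutive integers. Two preparatory lemmas establish that (a) in any ancestor simplex, each run of $I$ of even length is paired internally while each run of odd length receives exactly one extra vertex at one of its two ends, and (b) the number of odd-length runs of $I$ is exactly $m-d$. Together these give a direct enumeration of all ancestor simplices as choices of ``before/after'' for each odd run (subject to distinctness), with no induction needed.

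Two issues with your plan. First, your case~(ii) of ``free-pair extras'' never occurs for an internal cell: the paper's lemmas show every extra vertex is paired with an element of some odd run of $I$. This is not fatal, but it signals that you are missing the key structural fact.

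Second, and more seriously, your inductive step is not justified as stated. The conjugate $\bar{j}_a$ is defined relative to the \emph{original} simplex $S$, and after swapping some other $j_b\to\bar{j}_b$ to reach an intermediate simplex $S'$, the conjugate of $j_a$ relative to $S'$ need not equal $\bar{j}_a$. For instance, with $m=3$, $I=\{3,5\}$ and reference $S(3,4,5,6)$ one has $j_1=4$, $j_2=6$ and $\bar{j}_1=\bar{j}_2=2$; after swapping $j_1\to 2$, the conjugate of $j_2=6$ in $S(2,3,5,6)$ is $4$, not $2$. You hope the distinctness hypothesis on $(\alpha_a)$ rules out every instability of this kind, and indeed in the examples the instability only seems to arise precisely for tuples already excluded by distinctness; but turning this observation into a proof is a genuine lemma, not an ``elementary case analysis'', and proving it cleanly appears to require exactly the odd-run structure that the paper's two lemmas supply. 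Your converse direction has the same deficiency: without the odd-run lemma you have no a priori reason why the extras of an arbitrary ancestor must lie in the sets $\{j_a,\bar{j}_a\}$ at all.
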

To prove this proposition, we first establish two lemmas which
can be safely forgotten once the proposition is proved. We use the
notations of Proposition~\ref{prop: parent simplices} for these
two lemmas.
	
We first introduce the following terminology. Let $S$ be a simplex. Its vertices are partitioned into $r$ couples with
consecutive indices. Each couple is called a \emph{pair of
	vertices of} $S$ and the corresponding couple of indices is called a \emph{pair of indices of} $S$. The following lemma explains how indices of an internal cell $\mathfrak{C}$ can be paired in an ancestor of $\mathfrak{C}$.
\begin{lem}
	\label{lem: odd and even sequences of indices}Let $\mathfrak{C}$ be an internal
	cell. Let $L=\left(i_{a},\dots,i_{b}\right)$, with $0\leq a\leq b\leq d$,
	be a sequence of consecutive isolated indices of $\mathfrak{C}$ (i.e. the vertex
	$i_{a}-1$ and the vertex $i_{b}+1$ are not vertices of $\mathfrak{C}$). Then,
	\begin{enumerate}
		\item if the length of $L$ is odd, the vertices of $L$ can appear paired in two different ways in a simplex of $S_{\mathfrak{C}}$:
		      \begin{enumerate}
		      	\item the vertex $i_{a}$ is paired with the vertex $i_{a}-1$ and the rest
		      	      of the vertices of $L$ are paired together,
		      	\item the vertex $i_{b}$ is paired with $i_{b}+1$ and the rest of the
		      	      vertices are paired together,
		      \end{enumerate}
		\item if the length of this sequence is even, then in any simplex of $S_{\mathfrak{C}}$
		      the vertices of $\left(i_{a},\dots,i_{b}\right)$ are paired together,
		      i.e. $i_{a}$ with $i_{a}+1$, $i_{a+2}$ with $i_{a+2}+1$ ...
	\end{enumerate}
\end{lem}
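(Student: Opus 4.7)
The strategy is to reduce the lemma to a numerical identity between $m-d$ and the number of odd runs of $I$, valid for every internal cell.

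Fix an ancestor simplex $J \in S_\mathfrak{C}$. Since $J$ is a disjoint union of pairs $(j_l, j_l+1)$ of consecutive integers, each $v \in I$ is paired in $J$ with either $v-1$ or $v+1$. Within a consecutive run $L=(i_a,\dots,i_b)$ of indices of $I$, these choices must alternate, so the pairing on $L$ is completely determined by the type of $i_a$. If $|L|$ is odd, the two resulting options are exactly (a) and (b) of part~1, pairing either $i_a$ with $i_a-1$ or $i_b$ with $i_b+1$ and pairing the rest of $L$ internally. If $|L|$ is even, the two options are ``all internal'' (no external used in $L$, contributing $0$ extras) or ``both external'' (both $i_a-1$ and $i_b+1$ appear in $J$, contributing $2$ extras). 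Part~1 follows immediately; the entire content of part~2 is that the ``both external'' option never occurs.

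To handle part~2, I count the extras $|J \setminus I|=m-d$. Each odd run of $I$ contributes exactly one extra, each even run contributes $0$ or $2$, and any pair of $J$ disjoint from $I$ contributes $2$. Writing $q_o$ for the number of odd runs, $q_e^\ast$ for the number of even runs paired ``both external,'' and $f$ for the number of pairs of $J$ disjoint from $I$, I obtain
\[
m-d = q_o + 2q_e^\ast + 2f.
\]
Thus part~2 is equivalent to the identity $m-d=q_o$ for every internal cell, since then $q_e^\ast = f = 0$ forces every even run to be paired internally.

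I establish this identity by contrapositive. Suppose $m-d \geq q_o + 2$ and, to begin with, $1, n \notin I$. I construct a boundary cell of the form $\mathfrak{C}(1, i'_1, i'_1+1, \dots, i'_{r-1}, i'_{r-1}+1)$ containing $I$ as a strict subset, as follows: pair every even run of $I$ internally, pair every odd run of $I$ with one of its two external neighbors (contributing $q_o$ indices not in $I$), and fill the remaining $m-d-q_o-2 \geq 0$ slots with free pairs $(j,j+1)$ placed in the gaps of $[n]\setminus I$. The slack $m-d-q_o-2$ is even because $|I|=d+1\equiv q_o \pmod 2$ forces $m-d \equiv q_o \pmod 2$. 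Together with the special vertex $1$, this produces a valid boundary cell of size $m$ containing $I$, so $\mathfrak{C}(I)$ is a descendant of a boundary cell, contradicting the internal hypothesis. The cases $1 \in I$ or $n \in I$ are handled by the same construction, after observing that if the run of $I$ containing $1$ (respectively $n$) has odd length, then $\mathfrak{C}(I)$ is already a descendant of a boundary cell with $1$ (resp.\ $n$) in the special position; hence the internal hypothesis forces that run to be even and the construction goes through unchanged. Plugging $m-d=q_o$ back into the extras equation yields $q_e^\ast = f = 0$, which is part~2.

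The main obstacle is the boundary-cell construction step: one must verify that the chosen odd-run externals and free pairs can be placed without overlapping each other or $I$, using only the parity-matching slack. This is a careful but finite combinatorial check, made routine by the fact that $[n] \setminus I$ has enough room for the required $(m-d-q_o-2)/2$ free pairs whenever the parity hypothesis is satisfied.
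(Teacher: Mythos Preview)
Your argument rests on the identity $m-d=q_o$ for every internal cell, and that identity is false. Take $m=5$, $n=10$, $I=\{1,10\}$. Then $\mathfrak{C}(I)$ is a $1$-cell (a face of $S(1,2,j,j+1,9,10)$ for any $3\le j\le 7$) and it is internal: a boundary $(m-1)$-cell of the form $\{1,k_1,k_1+1,k_2,k_2+1\}$ has every index strictly below $n$ and so cannot contain $10$, while one of the form $\{k_1,k_1+1,k_2,k_2+1,10\}$ has every index strictly above $1$ and so cannot contain $1$. Here $q_o=2$ but $m-d=4$; in your counting equation the discrepancy is the free pair $(j,j+1)$ sitting in every ancestor simplex, so $f=1$, not $0$. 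Your contrapositive construction cannot manufacture a boundary cell containing $I$ here, because when both $1$ and $n$ lie in $I$ neither shape of boundary $(m-1)$-cell can contain $I$ at all. In particular your edge-case claim ``if the run of $I$ containing $1$ has odd length then $\mathfrak{C}(I)$ is already a descendant of a boundary cell with $1$ in the special position'' is wrong as soon as $n\in I$. The route through the global identity $m-d=q_o$ therefore does not close.

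The paper's argument is local rather than global. For part~2 it fixes one even run $L$ and one ancestor $S$ in which $L$ is paired with both external neighbours $i_a-1$ and $i_b+1$, deletes those two vertices from $S$ to obtain an $(m-2)$-cell $\mathfrak{C}'$ whose index set is a union of $r-1$ consecutive pairs, and then argues that such a $\mathfrak{C}'$ is a boundary cell by adjoining a single further vertex; since $I\subset\mathfrak{C}'$ this forces $\mathfrak{C}$ to be boundary. No claim about free pairs is made or needed. Your identity $m-d=q_o$ is precisely the content of the \emph{next} lemma in the paper, and the paper deduces it \emph{from} the present lemma rather than the other way around; by reversing the logical order you are forced to confront the $\{1,n\}$-type obstructions directly, and as the example above shows they genuinely break the stronger statement.
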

\begin{proof}
	The first assertion follows from the definition of a simplex. We prove
	the second assertion by contradiction: suppose $S$ is an ancestor simplex
	of $\mathfrak{C}$ and suppose that the pairings of the indices of $S$ containing
	an element of $L$ are
	\[
		P_{1}=\left(i_{a}-1,i_{a}\right),\,P_{2}=\left(i_{a}+1,i_{a}+2\right),\dots,\,P_{\frac{b-a+3}{2}}=\left(i_{b},i_{b}+1\right).
	\]
	Then, let $\mathfrak{C}^{'}$ be the $\left(m-2\right)$-cell with the same pairings
	as $S$ except $P_{1},\dots,P_{\frac{b-a+1}{2}}$ which are shifted by $+1$
	(i.e replace $P_{1}$ by $\left(i_{a},i_{a}+1\right)$ and so on),
	and $P_{\frac{b-a+3}{2}}$ which is forgotten. But $\mathfrak{C}^{'}$ is a boundary cell,
	indeed let $J$ be the indices of the vertices of $\mathfrak{C}^{'}$ and $J^{c}$
	its complement in $\left[n\right]$, so one can add to $\mathfrak{C}^{'}$ the vertex
	$\max\left(J^{c}\right)$ to obtain a cell of type $\mathfrak{C}\left(k_{1},k_{1}+1,\dots,k_{r},k_{r}+1,n\right)$.
	Moreover $\mathfrak{C}^{'}$ is an ancestor cell of $\mathfrak{C}$, hence $\mathfrak{C}$ is a boundary
	cell, which is a contradiction.
\end{proof}
The second lemma is the following.
\begin{lem}
	We have that:
	\begin{itemize}
		\item the number of vertices of $\mathfrak{C}$ satisfies $d+1\geq r$ ,
		\item the number $n_{\mathfrak{C}}$ of sequences of isolated indices of odd length
		      in $I$ is exactly $2r-\left(d+1\right)=m-d$.
	\end{itemize}
\end{lem}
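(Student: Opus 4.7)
The plan is to derive both claims from a careful analysis of any ancestor simplex $S(j_{1},j_{1}+1,\dots,j_{r},j_{r}+1)$ of $\mathfrak{C}$. Partition its $r$ pairs according to how they meet $I$: let $p$ be the number of pairs with both vertices in $I$ (internal pairs), $q$ the number with exactly one vertex in $I$ (half-pairs), and $e=r-p-q$ the number disjoint from $I$ (external pairs). Counting vertices of $I$ gives $2p+q=d+1$.

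The first step uses the previous lemma on odd and even sequences of isolated indices to pin down the pairing within each run of consecutive indices of $I$. Its second item forces the vertices of every even run to be paired entirely among themselves, contributing no half-pair, while its first item forces every odd run to be paired internally except for a single end-vertex paired with the neighbouring index just outside the run, contributing exactly one half-pair per odd run. Summing over runs yields $q=n_{\mathfrak{C}}$ and $p=\tfrac{1}{2}(d+1-n_{\mathfrak{C}})$, independently of the chosen ancestor simplex.

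The core of the argument is to show that $e=0$. Suppose some pair $(j_{a},j_{a}+1)$ of $S$ is disjoint from $I$, and set $V'=\{j_{1},j_{1}+1,\dots,j_{r},j_{r}+1\}\setminus\{j_{a},j_{a}+1\}$; then $I\subseteq V'$ and $V'$ is a union of $r-1$ non-touching pairs. The idea is to adjoin the free corner vertex $1$ when $1\notin V'$ (resp.\ $n$ when $n\notin V'$) to exhibit a boundary cell of the first (resp.\ second) type whose vertex set contains $I$, thereby forcing $\mathfrak{C}(I)$ to be a descendant of a boundary cell, contradicting the internal hypothesis. The straightforward case is $\{1,n\}\cap V'=\emptyset$, for which $\{1\}\cup V'$ is directly the vertex set of a boundary cell of type one. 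The remaining subcases, where $1$ or $n$ already lies in $V'$, are handled by a small re-pairing at the extremity of $S$ containing that vertex in order to free the corner vertex before applying the same construction.

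Once $e=0$ is established, the identities $p+q=r$ and $2p+q=d+1$ immediately give $p=d+1-r$ and $q=2r-(d+1)=m-d$. Non-negativity of $p$ proves the first claim $d+1\geq r$, and combining $q=n_{\mathfrak{C}}$ with $q=m-d$ proves the second claim $n_{\mathfrak{C}}=m-d$. The main obstacle is the third step: turning an external pair into an explicit boundary cell containing $\mathfrak{C}(I)$ requires careful handling of the extremities $1$ and $n$, so that the constructed vertex set genuinely matches the shape prescribed in the definition of the two types of boundary cells.
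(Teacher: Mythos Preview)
Your proof is correct and takes a different route from the paper's. The paper treats the two claims separately: the inequality $d+1\geq r$ is dispatched in one line (``otherwise complete $\mathfrak{C}$ to a boundary $(m-1)$-cell''), and $n_{\mathfrak{C}}=m-d$ is proved by induction on $d+1$, removing the maximal index of an odd run at each step to decrement both the vertex count and $n_{\mathfrak{C}}$ by one. You instead fix one ancestor simplex, decompose its $r$ pairs as $p$ internal, $q$ half, $e$ external, and derive both claims at once from the linear system $2p+q=d+1$, $p+q+e=r$, together with $q=n_{\mathfrak{C}}$ (read off directly from the preceding lemma) and $e=0$. This makes the formula $n_{\mathfrak{C}}=m-d$ visible as an elementary counting identity rather than as the output of an induction, and it yields $d+1\geq r$ for free as $p\geq 0$. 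The one place where your write-up is thin is the case $\{1,n\}\subset V'$ in the proof of $e=0$: to fill it in, take the maximal block of contiguous pairs $(1,2),(3,4),\dots,(2\ell-1,2\ell)$ in $V'$ and replace it by the lone vertex $1$ together with the shifted pairs $(2,3),\dots,(2\ell,2\ell+1)$; maximality of the block guarantees $2\ell+1\notin V'$ and that the shifted last pair remains non-touching with the rest of $V'$, so $\{1\}\cup V'\cup\{2\ell+1\}$ is a valid type-1 boundary cell containing $I$.
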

\begin{proof}
	First, since $\mathfrak{C}$ is an internal cell, we have $d+1\geq r$. Otherwise,
	we can always complete the vertices of $\mathfrak{C}$ to form a boundary cell
	of dimension $\left(m-1\right)$.
			
	We then prove by induction on the number of vertices $d+1$ of $\mathfrak{C}$
	that $n_{\mathfrak{C}}=2r-\left(d+1\right)$.
			
	\paragraph{Initialization $d+1=r$.}
			
	If $n_{\mathfrak{C}}>2r-\left(d+1\right)=r$, then $\mathfrak{C}$ cannot be a descendant of a simplex because a simplex contains at most $r$ couples of isolated indices. If $n_{\mathfrak{C}}<r$, then two indices of $I$ are consecutive and $\mathfrak{C}$ has $r$ vertices but in this case it is easy to verify that $\mathfrak{C}$ cannot be an internal cell.
			
	\paragraph{Heredity $d+1>r$.}
			
	Suppose that $n_{\mathfrak{C}}$ is greater (resp. lower) than $2r-\left(d+1\right)$. Then pick one odd sequence of consecutive indices and select the maximal index of this sequence. Remove the vertex of $\mathfrak{C}$ corresponding to this index and denote by $\mathfrak{C}^{'}$ the corresponding cell. Since $\mathfrak{C}$ is an internal cell with $\left(d+1\right)$ vertices, then $\mathfrak{C}^{'}$ is an internal cell with $d$ vertices. Moreover $n_{\mathfrak{C}^{'}}=n_{\mathfrak{C}}-1$ is greater (resp. lower) than $2r-s-1$, but this is impossible by induction.
\end{proof}
We now prove the proposition.
\begin{proof}
	[Proof of Proposition \ref{prop: parent simplices}]We deduce from these two lemmas that since $S\left(I,j_{1},j_{2},\dots,j_{m-d}\right)$ is an ancestor simplex of $\mathfrak{C}$, then each sequence of isolated indices of odd length of $\mathfrak{C}$ is followed or preceded by $j_{a}$ for $1\leq a\leq m-d$. Moreover $\bar{j}_{a}$ is, by construction, the predecessor or successor
	of this same sequence. It is then a consequence of Lemma~\ref{lem: odd and even sequences of indices}
	that a simplex is an ancestor of $\mathfrak{C}$ if and only if it is associated to
	the vertices $\left(I,\alpha_{1},\dots,\alpha_{m-d}\right)$ for a
	choice of $\alpha_{1},\dots,\alpha_{m-d}$ in $\left(j_{1},\bar{j}_{1}\right)\times\cdots\times\left(j_{m-d},\bar{j}_{m-d}\right)$
	pairwise disjoint.
\end{proof}
	
\subsubsection{Configuration of ancestor simplices of a cell containing the origin\label{subsec:Configuration of cells adjacent}}
	
The following proposition explains how ancestor simplices of a cell $\mathfrak{C}$ containing the origin of $V_{Y}$ ``triangulate'' a neighborhood of $\mathfrak{C}$, i.e. cells of these simplices are pairwise distinct and they cover a neighborhood of $\mathfrak{C}$. We recall that $S_{\mathfrak{C}}$ is the set of ancestor simplices of $\mathfrak{C}$, $L_{\mathfrak{C}}$ is the set of cells of the simplices of $S_{\mathfrak{C}}$, and $U_{\mathfrak{C}}\subset V_Y$ is the set of points of the cells in $L_{\mathfrak{C}}$. 
We also recall that since the origin of $V_{Y}$ belongs to $\mathfrak{C}$, then each simplex of $S_{\mathfrak{C}}$ is full-dimensional (Lemma~\ref{lem: full-dimensional}). 
\begin{prop}
	[Configuration of adjacent cells]\label{prop: Config adjacent cells}Let
	$Y\in\amplio$. Let $\mathfrak{C}$ be a cell containing the origin $\pi_{Y}\left(Y\right)$ of $V_Y$.
	Then,
	\begin{enumerate}
		\item the set $U_{\mathfrak{C}}$ is a neighborhood of $\mathfrak{C}$ in $V_Y$, that is $U_{\mathfrak{C}}$ contains an open set of $V_Y$ containing the cell $\mathfrak{C}$,
		\item each point of $U_{\mathfrak{C}}$ belongs to a unique cell of $L_{\mathfrak{C}}$.
	\end{enumerate}
\end{prop}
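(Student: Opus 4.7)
The plan is to realize the ancestor simplices of $\mathfrak{C}$ as a fan that tiles a neighborhood of $\mathfrak{C}$ in $V_Y$, much like the $2^{m-d}$ facets of a cross-polytope tile the sphere around its center, using the Main Lemma (Lemma~\ref{lem: main lemma}) to rule out degenerate configurations. The cells of $L_{\mathfrak{C}}$ then appear as the relatively open faces of this fan.

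First I would set up the local combinatorics. Let $d=\dim \mathfrak{C}$, pick one ancestor simplex $S(I,j_1,\ldots,j_{m-d})$ with conjugate vertices $\bar{j}_1,\ldots,\bar{j}_{m-d}$, and parametrize all ancestor simplices as $S_A := S(I,\alpha_1,\ldots,\alpha_{m-d})$ with $\alpha_a=\bar{j}_a$ for $a\in A$ and $\alpha_a=j_a$ otherwise, using Proposition~\ref{prop: parent simplices}. For each $a$, the $(m-1)$-cell $\mathfrak{D}_a$ obtained from $S(I,j_1,\ldots,j_{m-d})$ by dropping $j_a$ is an ancestor cell of $\mathfrak{C}$ (internal because $\mathfrak{C}$ is internal), so $\mathfrak{C}$, which contains $\pi_Y(Y)$, is a descendant of $\mathfrak{D}_a$ and Lemma~\ref{lem: main lemma} applies.

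Next I would apply the Main Lemma to each $\mathfrak{D}_a$: it places $Z_{j_a}$ and $Z_{\bar{j}_a}$ on opposite sides of the supporting hyperplane $H^{(a)}$ of $\mathfrak{D}_a$. Each $H^{(a)}$ contains the affine span of $\mathfrak{C}$ since $I$ is contained in the index set of $\mathfrak{D}_a$; and by the full-dimensionality of $S(I,j_1,\ldots,j_{m-d})$ given by Lemma~\ref{lem: full dimensional}, the $m-d$ hyperplanes are transverse with common intersection equal to the affine span of $\mathfrak{C}$. Hence, locally near any point of $\mathfrak{C}$, they partition a tubular neighborhood of $\mathfrak{C}$ into $2^{m-d}$ open orthants indexed by sign vectors $\varepsilon \in \{+,-\}^{m-d}$, with the orthant walls carrying the intermediate ancestor cells. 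The simplex $S_A$ locally near $\mathfrak{C}$ is the cone over $\mathfrak{C}$ generated by $\{Z_{\alpha_1},\ldots,Z_{\alpha_{m-d}}\}$, and the sign of $\alpha_a$ with respect to $H^{(a)}$ is $+$ when $\alpha_a=j_a$ and $-$ when $\alpha_a=\bar{j}_a$ by the Main Lemma; therefore $S_A$ covers exactly the orthant with sign vector $\varepsilon^A$. Combining with the descendants of $\mathfrak{C}$, which tile the affine span of $\mathfrak{C}$ near any point of its relative interior, we obtain Part~(1), that $U_\mathfrak{C}$ contains an open neighborhood of $\mathfrak{C}$, and Part~(2), that every point of $U_\mathfrak{C}$ lies in a unique cell of $L_\mathfrak{C}$: the orthants are disjoint, and each cell of $L_\mathfrak{C}$ coincides with a unique relatively open face of one of the $S_A$.

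The main obstacle is Step~3, establishing transversality of the $H^{(a)}$ and matching the ancestor simplices bijectively with the orthants when Proposition~\ref{prop: parent simplices} may produce fewer than $2^{m-d}$ distinct simplices because some choices of $\alpha_a$ force repetitions. I would handle this by reducing to a normal slice: projecting along the affine span of $\mathfrak{C}$ reduces the problem to an $(m-d)$-dimensional configuration where the conjugate vertices play the role of the generators of a non-degenerate cross-polytope, with non-degeneracy coming directly from the affine independence of the vertices of $S(I,j_1,\ldots,j_{m-d})$ supplied by Lemma~\ref{lem: full dimensional}; in the degenerate cases, the collapsed orthants correspond to collapsed $S_A$ in a compatible way, so the tiling statement is preserved.
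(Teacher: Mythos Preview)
Your orthant picture has a genuine gap. You define $H^{(a)}$ as the affine span of $\mathfrak D_a=\mathfrak C(I,j_1,\dots,\widehat{j_a},\dots,j_{m-d})$, so $H^{(a)}$ contains $Z_{j_b}$ for every $b\neq a$ but has no reason to contain $Z_{\bar j_b}$. The Main Lemma, applied only to these $\mathfrak D_a$, tells you the sign of $Z_{\alpha_a}$ with respect to $H^{(a)}$, but says nothing about the sign of $Z_{\alpha_b}$ with respect to $H^{(a)}$ when $\alpha_b=\bar j_b$. Hence the cone of $S_A$ over $\mathfrak C$ need not sit inside the orthant $\varepsilon^A$: already for $m-d=2$ one can place $p(Z_{j_1})=(1,0)$, $p(Z_{j_2})=(0,1)$, $p(Z_{\bar j_1})=(-1,10)$, $p(Z_{\bar j_2})=(10,-1)$, which satisfies both constraints you impose, yet the cone on $p(Z_{\bar j_1}),p(Z_{\bar j_2})$ overlaps the first quadrant. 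What actually rules this out is the Main Lemma applied to the \emph{other} $(m-1)$-ancestors of $\mathfrak C$, e.g.\ $\mathfrak C(I,\bar j_2)$ in this example; your write-up never invokes those. The obstacle you flag (repetitions among the $\alpha_a$'s) is not the real one.

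The paper does not try to fit all the $S_A$ into a single hyperplane-coordinate system. Instead it proves by induction on the number $a$ of non-shared vertices that any two simplices of $S_{\mathfrak C}$ intersect trivially: the base case $a=1$ is exactly the Main Lemma, and the inductive step passes to the normal cones in $\mathbb R^a$ and argues by contradiction (if the two ``opposite'' cones met non-trivially, a segment from a vertex of one to the intersection point would cross a facet of the other, producing a non-trivial intersection between two simplices differing in only $a-1$ vertices). Assertion~2 follows, and assertion~1 is then deduced by a short ``no boundary'' argument in the normal slice: since adjacent cones fit together along their shared facets, the union $p_{\mathfrak C}(U_{\mathfrak C})$ has no codimension-one boundary and therefore is all of $\mathbb R^{m-d}$. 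If you want to salvage your approach, you would have to apply the Main Lemma to every $(m-1)$-ancestor of $\mathfrak C$ and then argue that the resulting system of half-space constraints forces the cones to form a complete fan; carrying that out cleanly essentially reproduces the paper's induction.
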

\begin{example}
	Suppose $\mathfrak{C}= \mathfrak{C}\left(i,j\right)$ with $j>i+2$, $j<n$, $i>1$ and $m=3$. Then $\mathfrak{C}$ is a descendant of $4$ simplices. Following Proposition~\ref{prop: Config adjacent cells}, these $4$ simplices form a neighborhood of $\mathfrak{C}$ and their cells cannot intersect. These $4$ simplices must be in the configuration of Figure~\ref{fig: config simplices}.
\end{example}
\begin{figure}[h]
	\centering
	\begin{tikzpicture}
		\coordinate (i) at (0,0,0);     \coordinate (j) at (0,1,0);          \coordinate (i-1) at (-2,0,0.5);     \coordinate (j+1) at (0,0.2,-2);          \coordinate (i+1) at (3,0.3,0.5);          \coordinate (j-1) at (-0.7,-0.2,2);               \filldraw[color=blue!0, fill=blue!40, rounded corners=0.5pt,opacity=0.5] (i-1) -- (j) -- (j+1) -- cycle;     \draw [dashed, color=blue!80] (i-1) -- (j+1);     \draw [color=blue!80] (i-1) -- (j);     \draw [color=blue!80] (j) -- (j+1);     \filldraw[color=blue!0, fill=blue!40, rounded corners=0.5pt,opacity=0.5] (i-1) -- (j+1) -- (i) -- cycle;     \draw [dashed, color=blue!80] (j+1) -- (i);     \filldraw[color=blue!0, fill=blue!40, rounded corners=0.5pt,opacity=0.2] (i) -- (j+1) -- (j) -- cycle;     \filldraw[color=blue!0, fill=blue!40, rounded corners=0.5pt,opacity=0.2] (i) -- (j) -- (i-1) -- cycle;               \filldraw[blue] (i-1) circle (0.5pt) node[anchor=east]{$i-1$};     \filldraw[blue] (j+1) circle (0.5pt) node[anchor=south]{$j+1$};          \filldraw[color=red] (i) -- (j);     \filldraw[red] (i) circle (0.5pt) node[anchor=north]{$i$};     \filldraw[red] (j) circle (0.5pt) node[anchor=south]{$j$};          \filldraw[color=blue!0, fill=blue!40, rounded corners=0.5pt,opacity=0.5] (i) -- (j+1) -- (i+1) -- cycle;     \filldraw[color=blue!0, fill=blue!40, rounded corners=0.5pt,opacity=0.5] (j) -- (j+1) -- (i+1) -- cycle;     \filldraw[color=blue!0, fill=blue!40, rounded corners=0.5pt,opacity=0.2] (i) -- (i+1) -- (j) -- cycle;     \draw [color=blue!80] (j+1) -- (i+1);     \draw [color=blue!80] (j) -- (j+1);     \draw [dashed, color=blue!80] (i) -- (i+1);               \filldraw[color=blue!0, fill=blue!40, rounded corners=0.5pt,opacity=0.5] (i) -- (i+1) -- (j-1) -- cycle;     \filldraw[blue] (i+1) circle (0.5pt) node[anchor=west]{$i+1$};     \draw [dashed, color=blue!80] (i) -- (j-1);     \draw [dashed, color=blue!80] (i-1) -- (i);               \filldraw[color=blue!0, fill=blue!40, rounded corners=0.5pt,opacity=0.2] (i) -- (j) -- (j-1) -- cycle;     \filldraw[color=blue!0, fill=blue!40, rounded corners=0.5pt,opacity=0.5] (i) -- (i-1) -- (j-1) -- cycle;     \filldraw[color=blue!0, fill=blue!40, rounded corners=0.5pt,opacity=0.5] (j) -- (i+1) -- (j-1) -- cycle;     \filldraw[color=blue!0, fill=blue!40, rounded corners=0.5pt,opacity=0.5] (j) -- (i-1) -- (j-1) -- cycle;          \filldraw[blue] (j-1) circle (0.5pt) node[anchor=north]{$j-1$};          \draw [color=blue!80] (j) -- (i+1);     \draw [color=blue!80] (j-1) -- (i+1);     \draw [color=blue!80] (j-1) -- (j);     \draw [color=blue!80] (i-1) -- (j-1);     \draw [color=blue!80] (i-1) -- (j);
	\end{tikzpicture}
			
	\caption{Configuration of the $4$ ancestor simplices to a cell $\mathfrak{C}\left(i,j\right)$
		containing the origin when $j>i+2$, $j<n$, $i>1$ and $m=3$ .  \label{fig: config simplices}}
			
\end{figure}
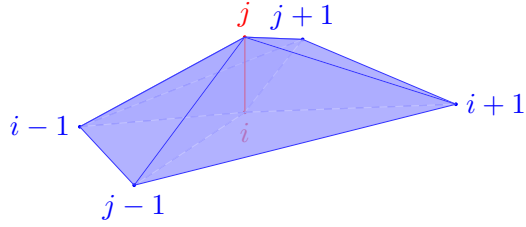
	
In particular, it follows from this proposition that for a small and continuous modification of $Y$, the number of cells of $L_{\mathfrak{C}}$ containing $\pi_{Y}\left(Y\right)$ is one.
	
The rest of this section is devoted to the proof of this Proposition.
	
\paragraph{The case $\dim\left(\mathfrak{C}\right)=m$. }
	
If $\dim\left(\mathfrak{C}\right)=m$, then the ancestor simplex $S$ of $\mathfrak{C}$ is full-dimensional and the cells of $L_{\mathfrak{C}}$ are the cells of $S$. In particular, they form a neighborhood of $\mathfrak{C}$ and they cannot pairwise intersect, otherwise $S$ is flattened and this is forbidden by Lemma~\ref{lem: full-dimensional}.

\paragraph{The case $\dim\left(\mathfrak{C}\right)<m$.}
In order to prove assertion $1$ and $2$ when $\dim\left(\mathfrak{C}\right)<m$, we need to establish some preparatory lemmas.

We define the cone $\mathcal{C}_S$ associated to a $S$ simplex of $S_\mathfrak{C}$ to be the convex cone generated by the vertices of $S$ in $V_Y$, that is the set of linear combinations of vertices of $S$ with nonnegative coefficients. Since $S$ is a convex simplex containing the origin, its relation with the cone $\mathcal{C}_S$ is particularly simple as explained by the following lemma.  

\begin{lem}
	\label{lem:ray_origin}
	Let $r$ be a ray of the convex cone $\mathcal{C}_S$ associated to a simplex $S$ of $S_\mathfrak{C}$. Then there exists a point $x$ of the ray $r$, different from the origin, such that the segment $[0,x]$ is included in $S$.

\end{lem}
\begin{proof}
	Let $r$ be a ray of $\mathcal{C}_S$. Since $\mathcal{C}_S$ is generated by $S$, then $S$ contains at least one point $x$ of the ray $r$ which is different from the origin. Now, since $S$ is convex and contains the origin, the simplex $S$ contains all the points of the segments $[0,x]$. 
\end{proof}

\begin{defn}
	
	Let $L$ be the list of indices of the common vertices of $S_1$ and $S_2$. Then the two simplices $S_1$ and $S_2$ intersect trivially if $S_1\cap S_2$ is contained in ${\rm Span}\left(Z_i,{\rm \, for \,} i \in L\right) \subset V_Y$. 
	
	Similarly, the two cones $\mathcal{C}_{S_1}$ and $\mathcal{C}_{S_2}$ associated to $S_1$ and $S_2$ intersect trivially if $\mathcal{C}_{S_1}\cap \mathcal{C}_{S_2}$ is contained in ${\rm Span}\left(Z_i,{\rm \, for \,} i \in L\right)$.
\end{defn}

\begin{example}
	The two simplices of the main lemma intersect trivially. On the other hand, the two simplices of Figure~\ref{fig:sub1} do not intersect trivially. 
\end{example} 

\begin{lem}
	\label{lem:trivial_intersection_cones}
	Two simplices $S_1$ and $S_2$ of $S_{\mathfrak{C}}$ intersect trivially if and only if $\mathcal{C}_{S_1}$ and $\mathcal{C}_{S_2}$ intersect trivially.
\end{lem}
\begin{proof}
	The forward direction is clear. The backward direction follows from Lemma~\ref{lem:ray_origin}. 
\end{proof}

The main ingredient of the proof of assertion $1$ and $2$ when $\dim\left(\mathfrak{C}\right)<m$ is the following lemma.

\begin{lem}
	\label{lem:trivial_intersection}
	Let $\mathfrak{C}$ be a cell containing the origin of $V_Y$. Then, any two simplices of $S_{\mathfrak{C}}$ intersect trivially.
\end{lem}

\begin{proof}
	Suppose $\mathfrak{C}$ is of dimension $d$. Let $1\leq a\leq m-d$, then we prove by induction on $a$ that: each couple $\left(S,S^{'}\right)$ of simplices of $S_{\mathfrak{C}}$ with $m+1-a$ common vertices intersect trivially. Note that $S$ and $S^{'}$ cannot have less than $d+1$ common vertices since they are ancestors of $\mathfrak{C}$ and thus they contain the $d+1$ vertices of $\mathfrak{C}$.
			
	\subparagraph{Initialization.}
			
	If $a=1$, then $S$ and $S^{'}$ intersect trivially by the main
	lemma.
			
	\subparagraph{Heredity.}
			
	Let $a$ be such that $1\leq a\leq m-d$. Let $S$ and $S^{'}$ be two simplices of $S_{\mathfrak{C}}$ sharing exactly $m+1-a$ vertices, and let $L$ be the list of indices of these vertices. Let $\left(L,i_{1},\dots,i_{a}\right)$ be the list of indices of vertices of $S$. Denote by $\bar{i}_{j}$, for $1\leq j\leq a$, the conjugate vertex of $i_{j}$ relative to	$S$. It follows from Proposition~\ref{prop: parent simplices} that the list of vertices of $S^{'}$ is $\left(L,\bar{i}_{1},\dots,\bar{i}_{a}\right)$.
			
	We now replace the study of intersection of simplices by the intersection of cones. 
	We first associate to any simplex $S\left(L,\alpha_{1},\dots,\alpha_{a}\right)$ of $S_{\mathfrak{C}}$, where $\alpha_{j}\in\left\{ i_{j},\bar{i}_{j}\right\} $, the cone $\mathcal{C} \left(L,\alpha_{1},\dots,\alpha_{a}\right)$ of $\mathbb{R}^m$ generated by the vertices of $S\left(L,\alpha_{1},\dots,\alpha_{a}\right)$. Furthermore, let $p:\mathbb{R}^{m}\rightarrow\mathbb{R}^{a}$ be the projection relative to the vector space ${\rm Span}\left(Z_{i},\,{\rm for\,}i\in L\right)$. We associate to  $S\left(L,\alpha_{1},\dots,\alpha_{a}\right)$ the projection of its corresponding cone, we denote this $a$-dimensional convex cone by
	$$
		\mathscr{C}\left(\alpha_{1},\dots,\alpha_{a}\right):=p\left(\mathcal{C} \left(L,\alpha_{1},\dots,\alpha_{a}\right) \right) \subset \mathbb{R}^{a}. 
	$$
	Clearly, $\mathscr{C}\left(\alpha_{1},\dots,\alpha_{a}\right)$ is the convex cone of $\mathbb{R}^{a} $ generated by $p\left(Z_{\alpha_{1}}\right),\dots,p\left(Z_{\alpha_{a}}\right)$. Since each simplex of $S_{\mathfrak{C}}$ is full-dimensional (Lemma~\ref{lem: full-dimensional}), the cone $\mathscr{C}\left(\alpha_{1},\dots,\alpha_{a}\right)$ is indeed of dimension $a$. The ray generated by $p\left(Z_{\alpha_{j}}\right)$ of $\mathscr{C}\left(\alpha_{1},\dots,\alpha_{a}\right)$ is called an extreme ray of index $\alpha_{j}$. We naturally define the trivial intersection of projected cones by: two projected cones $\mathscr{C}\left( \alpha_{1},\dots,\alpha_{a} \right)$ and $\mathscr{C}\left( \beta_{1},\dots,\beta_{a} \right)$, where $\alpha_j$ and $\beta_j$ belong to $\left\{i_j,\overline{i_j}\right\}$, with $\tilde{L}$ being the list of indices of their common extreme rays, intersect trivially if $\mathscr{C}\left( \alpha_{1},\dots,\alpha_{a} \right)\cap\mathscr{C}\left( \beta_{1},\dots,\beta_{a} \right) \subset {\rm Span}\left(p\left(Z_{i}\right),\,{\rm for\,}i\in\tilde{L}\right)$.
	\begin{claim}
		Two simplices $S_1$ and $S_2$ of $S_{\mathfrak{C}}$ intersect trivially if and only if the projected cones $p\left( \mathcal{C}_{S_1} \right)$ and $p\left( \mathcal{C}_{S_2} \right)$ intersect trivially. 
	\end{claim}
	Indeed, by Lemma~\ref{lem:trivial_intersection_cones} it suffices to show that $\mathcal{C}_{S_1}$ and $\mathcal{C}_{S_2}$ trivially intersect  if and only if $p\left( \mathcal{C}_{S_1} \right)$ and $p\left( \mathcal{C}_{S_2} \right)$  intersect trivially, which is clear from the definitions.
	
	We are going to show that two projected cones associated to two simplices of $S_{\mathfrak{C}}$ intersect trivially. This will prove the lemma. 
	
	\begin{claim}
		\label{claim: Vertex in siplex impossible}The cone $\mathscr{C}\left(i_{1},\dots,i_{a}\right)$ (resp. $\mathscr{C}\left(\bar{i}_{1},\dots,\bar{i}_{a}\right)$ ) cannot contain an  extreme ray of $\mathscr{C}\left(\bar{i}_{1},\dots,\bar{i}_{a}\right)$ (resp. $\mathscr{C}\left(i_{1},\dots,i_{a}\right)$).
	\end{claim}
	Indeed, suppose $p\left(Z_{i_{j}}\right)$ belongs to $\mathscr{C}\left(\bar{i}_{1},\dots,\bar{i}_{a}\right)$,
	for some $1\leq j\leq a$. Then the cone \\$\mathscr{C}\left(\bar{i}_{1},\dots,\hat{\bar{i}}_{j},\dots,\bar{i}_{a},i_{j}\right)$
	can either be of dimension strictly lower than $a$ or it intersects
	nontrivially with $\mathscr{C}\left(\bar{i}_{1},\dots,\bar{i}_{a}\right)$.
	Both cases are forbidden, the first one because the corresponding simplex
	would be flattened and the second one because $S\left(L,\bar{i}_{1},\dots,\bar{i}_{a}\right)$
	and $S\left(L,\bar{i}_{1},\dots,\hat{\bar{i}}_{j},\dots,\bar{i}_{a},i_{j}\right)$ would intersect nontrivially, which is forbidden by the induction hypothesis. This proves Claim~\ref{claim: Vertex in siplex impossible}. 
			
	Let $x\in\mathscr{C}\left(i_{1},\dots,i_{a}\right)\cap\mathscr{C}\left(\bar{i}_{1},\dots,\bar{i}_{a}\right)$ and suppose $x$ is not the origin. Since $\mathscr{C}\left(i_{1},\dots,i_{a}\right)$ is convex, the segment $\left[p\left(Z_{i_{1}}\right),x\right]$ is contained in $\mathscr{C}\left(i_{1},\dots,i_{a}\right)$. It follows from Claim~\ref{claim: Vertex in siplex impossible} that the segment $\left[p\left(Z_{i_{1}}\right),x\right]$ cannot be contained in $\mathscr{C}\left(\bar{i}_{1},\dots,\bar{i}_{a}\right)$, otherwise $p\left(Z_{i_{1}}\right)\in\mathscr{C}\left(\bar{i}_{1},\dots,\bar{i}_{a}\right)$. Thus, there is a point $y$ in $\left[p\left(Z_{i_{1}}\right),x\right]$ belonging to the boundary of $\mathscr{C}\left(\bar{i}_{1},\dots,\bar{i}_{a}\right)$. Then $y$ belongs to a facet (that is a codimension-one face) of $\mathscr{C}\left(\bar{i}_{1},\dots,\bar{i}_{a}\right)$, we denote this facet by $F$. Let $\overline{I}$ be the set of indices of the vertices generating $F$, i.e. $F=\spanp{p\left(Z_{\bar{i}}\right),\bar{i}\in\bar{I}}$. Let $\bar{i}_{j}$, for $1\leq j\leq a$, be the index such that $\left\{ \bar{i}_{1},\dots,\bar{i}_{a}\right\} =\bar{I}\cup\left\{ \bar{i}_{j}\right\} $. Then $\mathscr{C}\left(i_{1},\dots,i_{a}\right)$ and $\mathscr{C}\left(\bar{I},i_{j}\right)$ intersect in $y$. Moreover, this intersection is nontrivial since otherwise $y\in{\rm Span}_{>0}\left(p\left(Z_{i_{j}}\right)\right)$, but then $p\left(Z_{i_{j}}\right)$ belongs to $\mathscr{C}\left(\bar{i}_{1},\dots,\bar{i}_{a}\right)$, which is forbidden by Claim~\ref{claim: Vertex in siplex impossible}. The nontrivial intersection of $\mathscr{C}\left(i_{1},\dots,i_{a}\right)$ and $\mathscr{C}\left(\bar{I},i_{j}\right)$ is forbidden by the induction hypothesis. Hence $\mathscr{C}\left(i_{1},\dots,i_{a}\right)\cap\mathscr{C}\left(\bar{i}_{1},\dots,\bar{i}_{a}\right)=\left\{ 0\right\} $, that is their intersection is trivial. This ends the proof of the induction and thus of the lemma.
\end{proof}

\subparagraph{Proving assertion $2$ when $\dim\left(\mathfrak{C}\right)<m$.}
	
The cell $\mathfrak{C}$ is an internal cell since $\pi_{Y}\left(Y\right)$ belongs to $\mathfrak{C}$. We prove that two cells of $L_{\mathfrak{C}}$ cannot intersect. Suppose that $\mathfrak{C}_{1}$ and $\mathfrak{C}_{2}$ are two cells of $L_{\mathfrak{C}}$ with a common point, then $\mathfrak{C}_{1}$ and $\mathfrak{C}_{2}$ cannot belong to the same simplex, otherwise this simplex is flattened and this is forbidden by Lemma~\ref{lem: full-dimensional}. Hence $\mathfrak{C}_{1}$ and $\mathfrak{C}_{2}$ are descendant cells of two different simplices of $S_{\mathfrak{C}}$, say $S_1$ and $S_2$. Hence, if $\mathfrak{C}_{1}$ and $\mathfrak{C}_{2}$ intersect, then $S_1$ and $S_2$ intersect nontrivially. This is excluded by Lemma~\ref{lem:trivial_intersection}.

\subparagraph{Proving assertion $1$ when $\dim\left(\mathfrak{C}\right)<m$.}
Let $S_i$ be a simplex of $S_{\mathfrak{C}}.$ We recall that $\mathcal{C}_{S_{i}}$ is the convex cone of $V_Y \simeq \mathbb{R}^{m}$ generated by the vertices of $S_{i}$. 
\begin{claim}
The set $U_{\mathfrak{C}}=\bigcup_{S_{i}\in S_{\mathfrak{C}}}S_{i}$ is a neighborhood of $\mathfrak{C}$ if and only if $\bigcup_{S_{i}\in S_{\mathfrak{C}}}\mathcal{C}_{S_{i}}=\mathbb{R}^{m}.$
\end{claim}
If  $\bigcup_{S_{i}\in S_{\mathfrak{C}}}S_{i}$ is a neighborhood of $\mathfrak{C}$, it is also a neighborhood of the origin, then the cone generated by this set is $\bigcup_{S_{i}\in S_{\mathfrak{C}}}\mathcal{C}_{S_{i}}=\mathbb{R}^{m}$. This proves the forward direction. To prove the backward direction, suppose that there exists a point $p$ of $\mathfrak{C}$ in the boundary of $U_{\mathfrak{C}}$. First, this point $p$ cannot be the origin, since we can use Lemma~\ref{lem:ray_origin} to construct a neighborhood of the origin in $U_{\mathfrak{C}}$. Then, if $p \in \mathfrak{C}$ is in the boundary of $U_{\mathfrak{C}}$, we also have that the segment $\left[0,p\right]$ is in the boundary of $U_{\mathfrak{C}}$. Indeed, otherwise the segment $\left]0,p\right]$ is cut by a facet (codimension one face) of a simplex $S$. But $S$ is also an ancestor of $\mathfrak{C}$. Thus the cell $\mathfrak{C}$ intersects another cell of $S$, and thus $S$ is flattened, which is ruled out by Lemma~\ref{lem: full-dimensional}.

We now show that the cone $\mathcal{C}_{U_{\mathfrak{C}}}:=\bigcup_{S_{i}\in S_{\mathfrak{C}}}\mathcal{C}_{S_{i}}$ has no facet (codimension one face). Since $\mathcal{C}_{U_{\mathfrak{C}}}\neq\left\{ 0\right\} $ (it is the union of $m$-dimensional cones) we deduce that $\mathcal{C}_{U_{\mathfrak{C}}}=\bigcup_{S_{i}\in S_{\mathfrak{C}}}\mathcal{C}_{S_{i}}=\mathbb{R}^{m}$. A point in the relative interior of a facet of $\mathcal{C}_{U_{\mathfrak{C}}}$ is a point in the relative interior of a facet of $\mathcal{C}_{S_{i}}$, for some $S_{i}\in S_{\mathfrak{C}}$. Suppose that such a point, say $p$, exists. Since two cones intersect trivially (Lemma~\ref{lem:trivial_intersection_cones} and Lemma~\ref{lem:trivial_intersection}), when a path leaves $C_{S_{i}}$ through $p$, it lands in the conjugate cone relative to this facet. Thus, the point $p$ does not belong to the boundary of $\mathcal{C}_{U_{\mathfrak{C}}}$, and thus it does not exist. This ends the proof.

\subsection{Constancy of the crossing number in $\amplio \times \matp$\label{subsec:Proving the crossing thm}}
	
Since $\grsp{k,n}$ and $\matp$ are path-connected (see \cite{postnikov2006total} for the positive Grassmannian), there exists a continuous
path
\[
	\left(C,\mathcal{Z}\right):\left[0,1\right]\rightarrow\grsp{k,n}\times\matp
\]
between any two couples of points in $\grsp{k,n}\times\matp$.
We also denote $Y\left(t\right)=C\left(t\right)\mathcal{Z}\left(t\right)$.
We prove that the crossing number $c_{n,k,m}\left(Y\left(t\right),\mathcal{Z}\left(t\right)\right)$
is constant along these paths, hence proving the first part of Theorem~\ref{thm: crossing is constant}.
For every $t\in\left[0,1\right]$ , we denote by $\mathscr{L}\left(t\right)$
the list of cells containing the origin $\pi_{Y\left(t\right)}\left(Y\left(t\right)\right)$
of $V_{Y\left(t\right)}$. In the following, we prove that for any
$t_{0}\in\left[0,1\right]$, there exists $\epsilon>0$ such that
$c_{n,k,m}\left(Y\left(t\right),\mathcal{Z}\left(t\right)\right)={\rm Card}\left(\mathscr{L}\left(t\right)\right)$
is constant for $t\in\left]t_{0}-\epsilon,t_{0}+\epsilon\right[\cap\left[0,1\right]$.
Since $\left[0,1\right]$ is compact, we can extract a finite number
of such balls to cover it, hence we deduce that the crossing number
is constant on $\left[0,1\right]$.
	
\paragraph{Step 1.}
	
Fix $t_{0}\in\left[0,1\right]$ and let $\mathscr{L}\left(t_{0}\right)=\left\{ \mathfrak{C}_{1},\dots,\mathfrak{C}_{n_{0}}\right\} $ be the list of cells containing the origin at $t_{0}$. When $\left(Y,\mathcal{Z}\right)$ moves continuously in $\amplio\times\matp$, then the vertices $Z_{i}\left(t\right)=\pi_{Y\left(t\right)}\left(\mathcal{Z}_{i}\right)$ and thus the cells move continuously. By applying Proposition~\ref{prop:
	Config adjacent cells} and Lemma~\ref{lem: full-dimensional} to $\mathfrak{C}_{i}$ at $t_{0}$ we deduce by continuity that there exists $\epsilon_{i}$ such that for every $t\in\left]t_{0}-\epsilon_{i},t_{0}+\epsilon_{i}\right[$:
\begin{itemize}
	\item the set $U_{\mathfrak{C}_{i}}$ is still a neighborhood of $\mathfrak{C}_{i}$,
	\item the cells of $L_{\mathfrak{C}_{i}}$ do not intersect,
	\item the origin $\pi_{Y\left(t\right)}\left(Y\left(t\right)\right)$ belongs to a unique cell of $L_{\mathfrak{C}_{i}}$,
	 \item each ancestor simplex of $\mathfrak{C}_i$ is full-dimensional. 
\end{itemize}
We denote by $\mathfrak{c}_{i}\left(t\right)$ the cell of $L_{\mathfrak{C}_{i}}$ containing
the origin at time $t$. It is such that $\mathfrak{c}_{i}\left(t_0\right) = \mathfrak{C}_{i}$.  We define such $\epsilon_{i}$ for every
$1\leq i\leq n_{0}$ and let $\epsilon:=\min\left\{ \epsilon_{1},\dots,\epsilon_{n_{0}}\right\} $.
Hence we defined $n_{0}$ paths
\begin{align*}
	\left]t_{0}-\epsilon,t_{0}+\epsilon\right[ & \rightarrow\mathfrak{Cells}                \\
	t                                          & \rightarrow \mathfrak{c}_{i}\left(t\right) 
\end{align*}
which are pairwise disjoint at $t_{0}$. Each path has a finite number of discontinuous points corresponding to the origin jumping from one cell to another. We redefine $\epsilon>0$ small enough such that each path is continuous on $]t_0-\epsilon,t_0+\epsilon[$ except possibly at $t_0$. 
	
\paragraph{Step 2.}
	
We prove that, up to choosing $\epsilon>0$ small enough, any two paths $t\rightarrow \mathfrak{c}_{i}\left(t\right)$ constructed in Step~$1$ do not intersect on $\left]t_{0}-\epsilon,t_{0}+\epsilon\right[$. We first analyze the situation when $t \in [t_0,t_0+\epsilon[$. 

Suppose that the path $t \rightarrow \mathfrak{c}_i$, for  $t \in [t_0,t_0+\epsilon[$, is discontinuous at $t_0$. Denote by $\mathfrak{C}_{i}^{+}$ the value of $t\rightarrow \mathfrak{c}_{i}\left(t\right)$ on $\left]t_{0},t_{0}+\epsilon\right[$. 
\begin{claim}
	The cell $\mathfrak{C}^{+}_{i}$ is an ancestor cell of $\mathfrak{C}_{i}$. 
\end{claim}

Indeed, we first recall (Step~$1$) that each ancestor simplex of $\mathfrak{C}_{i}$ is full-dimensional when $t\in\left]t_{0}-\epsilon,t_{0}+\epsilon\right[$. According to  the type of discontinuity of $t\rightarrow\mathfrak{c}_{i}\left(t\right)$ at $t_{0}$, we deduce that the origin $\pi_{Y(t_0)}(Y(t_0))$ must belong to the closure of $\mathfrak{C}^{+}_{i}$ at $t_0$. Moreover, the closure of a full-dimensional cell equals the disjoint union of this cell and its descendent cells. Thus, if $\pi_{Y(t_0)}(Y(t_0))$ belongs to the closure of $\mathfrak{C}^{+}_{i}$ at $t_0$, then $\mathfrak{C}^{+}_{i}$ is in the lineage of $\mathfrak{C}_{i}$, and even more $\mathfrak{C}^{+}_{i}$ must be an ancestor of $\mathfrak{C}_{i}$. 
This ends the proof of the claim. 

Fix $i\neq j$ and suppose that the paths $t\rightarrow \mathfrak{c}_{i}\left(t\right)$ and $t\rightarrow \mathfrak{c}_{j}\left(t\right)$ intersect on $[t_0,t_0+\epsilon[$:
\begin{itemize}
	\item if $t\rightarrow \mathfrak{c}_{i}\left(t\right)$ and $t\rightarrow \mathfrak{c}_{j}\left(t\right)$ are discontinuous at $t_0$, then $\mathfrak{C}_{i}^{+}= \mathfrak{C}_{j}^{+}=: \mathfrak{C} $, and then $\mathfrak{C}$ is
	      an ancestor cell of $\mathfrak{C}_{i}$ and of $\mathfrak{C}_{j}$, so $\mathfrak{C}_{i}$ and $\mathfrak{C}_{j}$
	      belong to the same simplex. But then, at $t_{0}$ the origin belongs
	      to two cells of the same simplex, hence this simplex is flattened, which is impossible by Lemma~\ref{lem: full-dimensional}.
	\item if $t\rightarrow \mathfrak{c}_{i}\left(t\right)$ is discontinuous and $t\rightarrow \mathfrak{c}_{j}\left(t\right)$ is continuous at $t_0$, then $\mathfrak{C}_{i}^{+}= \mathfrak{C}_{j}$ and then $\mathfrak{C}_{i}$ and $\mathfrak{C}_{j}$ are in the same lineage. But then at $t_{0}$ the origin is
	      contained in two cells in the same lineage, which is impossible since by definition the intersection of two cells in the same lineage is empty. The situation is similar if we exchange $i$ and
	      $j$.
\end{itemize}
In the last case, when  $t\rightarrow \mathfrak{c}_{i}\left(t\right)$ and $t\rightarrow \mathfrak{c}_{j}\left(t\right)$ are continuous at $t_0$, it suffices to shorten the interval $\left[t_{0},t_{0}+\epsilon\right[$ by choosing $\epsilon>0$ small enough to avoid the intersection.  By doing a similar reasoning for $t\in\left]t_{0}-\epsilon,t_{0}\right]$, we prove that there exists $\epsilon>0$ such that the paths $t\rightarrow \mathfrak{C}_{i}\left(t\right)$, for $1\leq i\leq n_{0}$ do no intersect on $\left]t_{0}-\epsilon,t_{0}+\epsilon\right[$.
			
\paragraph{Step 3.}
			
We now prove that for $t\in\left]t_{0}-\epsilon,t_{0} + \epsilon\right[$, we have
\[
	\mathcal{L}\left(t\right)=\left\{ \mathfrak{c}_{1}\left(t\right),\dots,\mathfrak{c}_{n_{0}}\left(t\right)\right\} ,
\] up to choosing $\epsilon>0$ small enough. Since we showed that the paths $t \rightarrow \mathfrak{c}_i(t)$ do no intersect, this concludes the proof. By construction, we have $\mathcal{L}\left(t_0 \right)=\left\{ \mathfrak{c}_{1}\left(t_0 \right),\dots,\mathfrak{c}_{n_{0}}\left(t_0 \right)\right\}$ and  the inclusion $\left\{ \mathfrak{c}_{1}\left(t\right),\dots,\mathfrak{c}_{n_{0}}\left(t\right)\right\} \subseteq\mathcal{L}\left(t\right)$ for $t \in ]t_0-\epsilon,t_0+\epsilon[$. Suppose that the inclusion is strict, then up to choosing $\epsilon>0$ small enough, this amounts to assuming that there exist $\mu>0$ and a cell $\tilde{\mathfrak{C}}$ not in the set $\left\{ \mathfrak{c}_{1}\left(t\right),\dots,\mathfrak{c}_{n_{0}}\left(t\right)\right\}$, for $t \in ]t_0,t_0+\epsilon[$ such that the origin is contained in $\tilde{\mathfrak{C}}$ when $t \in ]t_0,t_0+\mu[$. There are two possibilities.
\begin{itemize}
	\item If $\tilde{\mathfrak{C}}$ is a cell of a simplex $\tilde{S}$ which is not an ancestor of a cell of $\mathcal{L}\left(t\right)$ for $t \in ]t_0,t_0+\epsilon[$. Then, the cell $\tilde{\mathfrak{C}}$ belongs to the boundary of $\tilde{S}$, which is a closed set of $V_{Y(t)}$.
		But this is impossible since the origin is contained in $\tilde{\mathfrak{C}}$ when $t \in ]t_0,t_0+\mu[$, and the interval  $]t_0,t_0+\mu[$ is open on the left side.
	\item If $\tilde{\mathfrak{C}}$ is a cell of an ancestor simplex of a cell, say $\mathfrak{C_i}$, of $\mathcal{L}\left(t\right)$  for $t \in ]t_0,t_0+\epsilon[$. Then, for $t$ close enough to $t_0$ the origin belongs to two cells of the same simplex: $\tilde{\mathfrak{C}}$ and $\mathfrak{C}_i$. But then this simplex is flattened and this is ruled out by Lemma~\ref{lem: full-dimensional}. 
\end{itemize}
	
By doing a similar reasoning for $t<t_{0}$, we conclude that there exists $\epsilon$ such that $\mathcal{L}\left(t\right)=\left\{ \mathfrak{c}_{1}\left(t\right),\dots,\mathfrak{c}_{n_{0}}\left(t\right)\right\} $ for $t\in\left]t_{0}-\epsilon,t_{0}+\epsilon\right[$. This ends the proof.
      							      	
      	\subsection{Independence of the crossing number in $n$\label{subsec:Independence n crossing}}
      							      	
Let $m=2r-1$ for some positive integer $r$. We use the same approach as for the winding number. We showed that the crossing number is independent of the point in $\amplio \times \matp$. We now show that for two specific choices of couples, one in $\mathcal{A}_{n+1,k,m}^{>0} \times {\rm Mat}_{n+1,k+m}^{>0}$ and the other one in $\mathcal{A}_{n,k,m}^{>0} \times \matp$ the crossing number is the same. Thus the crossing number is independent of $n$. 
\begin{prop}
There exist $\left(Y,\mathcal{Z}^{'}\right)\in\mathcal{A}_{n+1,k,m}^{>0} \times {\rm Mat}_{n+1,k+m}^{>0}$ and $\left(Y,\mathcal{Z}\right)\in\mathcal{A}_{n,k,m}^{>0}\times \matp$ such that 
\[
c_{n+1,k,m}\left(Y,\mathcal{Z}^{'}\right)=c_{n,k,m}\left(Y,\mathcal{Z}\right).
\]
\end{prop}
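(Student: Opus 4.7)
My plan follows the template of the winding number proof in the previous subsection. I first construct the points. Let $\mathcal{Z}' = (\mathcal{Z}_1, \ldots, \mathcal{Z}_n, \mathcal{Z}_{n+1}) \in {\rm Mat}_{n+1,k+m}^{>0}$ and let $\mathcal{Z} = (\mathcal{Z}_1, \ldots, \mathcal{Z}_n) \in \matp$. I pick $C \in \grsp{k,n}$ and set $Y := C\mathcal{Z} \in \amplio$. Extending $C$ by a zero column yields $C' = (C \mid 0) \in \grp{k,n+1}$ with $Y = C'\mathcal{Z}' \in \mathcal{A}_{n+1,k,m}$. By Lemma~\ref{lem: Consequence Cauchy Binet} together with the positivity of $C$ and $\mathcal{Z}'$, the strict coarse boundary conditions at level $n+1$ hold for $Y$. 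To upgrade $Y$ to a point of $\mathcal{A}^{\circ}_{n+1,k,m}$, I exhibit $\tilde{C}' \in \grsp{k,n+1}$ in the fiber of $\tilde{\mathcal{Z}}'$ over $Y$. This fiber is a positive-dimensional affine subspace of $\gr{k,n+1}$: it contains every matrix of the form $\tilde{C}' = (C - c\,\Lambda \mid c)$, where $\Lambda$ is a row vector with $\Lambda\,\mathcal{Z} = \mathcal{Z}_{n+1}$ and $c \in \mathbb{R}^k$ is arbitrary. For a careful small choice of $c$ (depending on $C$ and $\mathcal{Z}_{n+1}$), the resulting $\tilde{C}'$ lies in $\grsp{k,n+1}$, placing $Y$ in $\mathcal{A}^{\circ}_{n+1,k,m}$.

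Next I compare the two cell collections. Every simplex at level $n$ remains a simplex at level $n+1$, and the new simplices are precisely those of the form $S(I, n, n+1)$ with $I = (i_1, i_1+1, \ldots, i_{r-1}, i_{r-1}+1)$ ascending in $[2, n-1]$. Hence the set of cells at level $n$ is contained in the set of cells at level $n+1$, and the new cells in the difference are exactly those whose vertex index set contains $n+1$. Since $\pi_Y(Y) \in V_Y$ is the same point regardless of the level, the equality $c_{n+1,k,m}(Y, \mathcal{Z}') = c_{n,k,m}(Y, \mathcal{Z})$ will follow once I show that no new cell contains $\pi_Y(Y)$.

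The key step is to choose $\mathcal{Z}_{n+1}$ close to $\mathcal{Z}_n$ along a direction preserving the strict positivity of $\mathcal{Z}'$, so that $Z_{n+1} := \pi_Y(\mathcal{Z}_{n+1})$ lies close to $Z_n$. Consider a candidate new full-dimensional simplex $S(I, n, n+1)$: by Remark~\ref{rem: crossing as altenating signs} it contains $\pi_Y(Y)$ iff the twistors obtained by omitting one vertex alternate in sign; in particular $\langle Y, I, n\rangle$ and $\langle Y, I, n+1\rangle$ must have opposite signs. But $\langle Y, I, n\rangle$ is nonzero by the strict coarse boundary conditions, and by continuity of the determinant the two twistors have the same sign when $\mathcal{Z}_{n+1}$ is close enough to $\mathcal{Z}_n$, ruling out the alternation. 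For new cells of lower dimension containing $n+1$, the origin-in-cell condition forces specific twistor coordinates to vanish while adjacent ones have prescribed signs; such configurations are ruled out by Proposition~\ref{cor: forbidden vanishings} combined with the sign flip property of Lemma~\ref{lem: k sign flips}. The main technical obstacle is organizing this sign analysis uniformly across all dimensions of new cells and all index configurations $J \ni n+1$, verifying case by case that the required twistor constraints are incompatible with the amplituhedron sign conditions inherited from $Y \in \amplio$.
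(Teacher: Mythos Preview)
Your construction of $(\mathcal{Z}',\mathcal{Z},C,Y)$ and your identification of the new simplices $S(I,n,n+1)$ as the only potential discrepancy are both correct and match the paper. But you make the argument harder than necessary in two places, and the second leaves the proof genuinely incomplete.

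First, to place $Y$ in $\mathcal{A}_{n+1,k,m}^{\circ}$ you try to build an explicit $\tilde{C}'\in\grsp{k,n+1}$ in the fiber over $Y$. The paper avoids this entirely: it observes that $Y\in\mathcal{A}_{n,k,m}^{\circ}\cap\mathcal{A}_{n+1,k,m}$, and since $\mathcal{A}_{n,k,m}^{\circ}$ is open one can perturb to a point $Y_{2}\in\mathcal{A}_{n,k,m}^{\circ}\cap\mathcal{A}_{n+1,k,m}^{\circ}$. Your fiber construction is also not quite justified as written: the maximal minors of $(C-c\Lambda\mid c)$ that involve the last column are homogeneous of degree one in $c$, so positivity requires a correct \emph{direction} for $c$, not just smallness.

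Second, and this is the real gap, the paper makes a further \emph{generic} choice of $Y_{2}$ inside that open intersection so that
\[
\left\langle Y_{2},\,i_{1},i_{1}+1,\dots,i_{r},i_{r}+1\right\rangle\neq 0
\]
for every list in ${[n+1]\choose m+1}$. This forces the origin of $V_{Y_{2}}$ to lie only in $m$-dimensional cells, so the lower-dimensional case simply does not arise. Then one only has to exclude the interior of each new simplex $S(I,n,n+1)$, and this is immediate from the strict coarse boundary conditions: $\left\langle Y_{2},I,n\right\rangle>0$ (level $n$) and $\left\langle Y_{2},I,n+1\right\rangle>0$ (level $n+1$) have the same sign, so the alternation required by Remark~\ref{rem: crossing as altenating signs} fails. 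Your continuity device ($\mathcal{Z}_{n+1}$ close to $\mathcal{Z}_{n}$) is therefore unnecessary even for the top-dimensional case.

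Without the generic-position step you are forced into the case analysis of lower-dimensional new cells that you describe in your last paragraph, and that analysis is only gestured at: you name Proposition~\ref{cor: forbidden vanishings} and Lemma~\ref{lem: k sign flips} but do not show how they dispose of every configuration $J\ni n+1$. The missing idea is precisely the codimension-one genericity argument, which collapses the whole discussion to a two-line sign check.
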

\begin{proof}
Let $\mathcal{Z}^{'}=\left(\mathcal{Z}_{1},\dots,\mathcal{Z}_{n+1}\right)\in{\rm Mat}_{n+1,k+m}^{>0}$ and then define $\mathcal{Z}=\left(\mathcal{Z}_{1},\dots,\mathcal{Z}_{n}\right)\in\matp$. We first show that $\mathcal{A}_{n,k,m}^{>0}\cap\mathcal{A}_{n+1,k,m}^{>0}\neq \varnothing$. Indeed, let $C\in\grsp{k,n}$ so that $Y=C\mathcal{Z}\in\amplio$. We define the matrix $C^{'}$ by adding a $\left(n+1\right)$th column of zeros to $C$, hence we have $Y=C\mathcal{Z}=C^{'}\mathcal{Z}^{'}$, and thus $Y\in\mathcal{A}_{n,k,m}^{>0}\cap\mathcal{A}_{n+1,k,m}$. Now,  $\amplio$ and $\mathcal{A}_{n+1,k,m}^{>0}$ are open sets of ${\rm Gr}_{k,k+m}$ (see the proof of Lemma~9.4 in \cite{galashin2020parity} for an argument), moreover since the closure of ${\rm Gr}_{k,n+1}^{>0}$ equals ${\rm Gr}_{k,n+1}^{\geq 0}$ it follows that the closure of $\mathcal{A}_{n+1,k,m}^{>0}$ is the amplituhedron $\mathcal{A}_{n+1,k,m}$. Hence we conclude that $\mathcal{A}_{n,k,m}^{>0}\cap\mathcal{A}_{n+1,k,m}^{>0}\neq \varnothing$. 

Then, choose $Y_{2}$ in $\mathcal{A}_{n,k,m}^{>0}\cap\mathcal{A}_{n+1,k,m}^{>0}$ such that 
\[
\left\langle Y_{2},i_{1},i_{1}+1,\dots,\widehat{i_{j}+\epsilon},\dots,i_{r},i_{r}+1\right\rangle \neq0,
\]
where $\left(i_{1},i_{1}+1,\dots,i_{r},i_{r}+1\right)\in{\left[n+1\right] \choose m+1}$, $1\leq j \leq r$ and $\epsilon \in {0,1}$. This is always possible since the dimension of $\mathcal{A}_{n,k,m}^{>0}\cap\mathcal{A}_{n+1,k,m}^{>0}$ is $km$ since it is an intersection of open sets, and the vanishing locus of the twistor coordinates $\left\langle Y_{2},i_{1},i_{1}+1,\dots,\widehat{i_{j}+\epsilon},\dots,i_{r},i_{r}+1\right\rangle =0$ is of codimension $1$. Thus, if the origin of $V_{Y_{2}}$ is contained in a simplex, it is contained in its $m$-dimensional cell.

In order to prove that the crossing number $c_{n,k,m}\left(Y_{2},\mathcal{Z}\right)$ and $c_{n+1,k,m}\left(Y_{2},\mathcal{Z}^{'}\right)$ is the same it suffices to prove that the origin cannot belong to a cell
\[
C\left(i_{1},i_{1}+1,\dots,i_{r-1}+1,n,n+1\right),
\]
where $\left(i_{1},i_{1}+1,\dots,i_{r-1}+1\right)\in{\left[n-1\right] \choose m-1}$ . However, if such a cell contains the origin then 
\[
{\rm sign}\left\langle Y_{2},i_{1},i_{1}+1,\dots,i_{r-1}+1,\widehat{n},n+1\right\rangle =-{\rm sign}\left\langle Y_{2},i_{1},i_{1}+1,\dots,i_{r-1}+1,n,\widehat{n+1}\right\rangle 
\]
and it follows from the strict coarse boundary conditions for $Y_{2}\in\mathcal{A}_{n+1,k,m}^{>0}$ and for $Y_{2}\in\amplio$ that the twistor coordinates on LHS and on RHS are positive, and thus the equality cannot hold. 
\end{proof}
      							      	
      	\subsection{The crossing number for $n=k+m$ \label{subsec: crossing for n=00003Dk+m}}
      							      	
      	Let $m=2r-1$ for some positive integer $r$. We show that there exists
      	$\mathcal{Z}\in{\rm Mat}_{n,n}^{>0}$ and $C\in\grsp{k,k+m}$ such
      	that the crossing number
      	\begin{equation}
      		c_{n=k+m,k,m}\left(C\mathcal{Z},\mathcal{Z}\right)=\begin{cases}
      		\frac{2k+m-1}{m+1}{\frac{k+m-2}{2} \choose \frac{m-1}{2}} & {\rm for\;}k\;{\rm odd},\\
      		2{\frac{k+m-1}{2} \choose \frac{m+1}{2}} & {\rm for\;}k\;{\rm even.}
      		\end{cases}\label{eq: explicit formula crossing}
      	\end{equation}
      	Since we proved that the crossing number is independent of $\mathcal{Z},Y$
      	and $n$, this ends the proof of Theorem~\ref{thm: crossing is constant}.
      							      	
      	\paragraph{Step $1.$}
      							      	
      	Each simplex containing the origin of $V_{Y}$ is full-dimensional by Lemma~\ref{lem: full-dimensional}, so we can choose $C\in\grsp{k,n}$ such that the origin of $V_{Y}$ only intersects these simplices in their interior. Equivalently, we pick $C\in\grsp{k,n}$ such that each twistor coordinate
      	\\$\left\langle C\mathcal{Z},i_{1},i_{1}+1,\dots,\widehat{i_{j}+\epsilon},\dots,i_{r},i_{r}+1\right\rangle $,
      	for $\epsilon\in\left\{ 0,1\right\} $ and $\left(i_{1},i_{1}+1,\dots,i_r,i_{r}+1\right)\in{\left[n\right] \choose m+1}$, which is always possible since the locus of points with one vanishing
      	twistor coordinate is of codimension $1$ in $\grsp{k,n}$.
      							      	
      	\paragraph{Step $2$.}
      							      	
      	Let $\left(i_{1},i_{1}+1,\dots,i_{r},i_r,i_{r}+1\right)\in{\left[n\right] \choose m+1}$, then the origin belongs to the simplex $S(i_{1},i_{1}+1,\ldots,i_{r},i_{r}+1)$
      	if and only if it satisfies the following two conditions:
      	\begin{equation}
      		\tag{Condition \ensuremath{\left(i\right)}}{\rm sign}\left\langle i_{1},i_{1}+1,\dots,\widehat{i_{j}+1},\dots,i_{r},i_{r}+1\right\rangle =-{\rm sign}\left\langle i_{1},i_{1}+1,\dots,\widehat{i_{j}},\dots,i_{r},i_{r}+1\right\rangle ,\label{eq: condition i}
      	\end{equation}
      	for $1\leq j\leq r$, and
      							      	
      	\begin{equation}
      		\tag{Condition \ensuremath{\left(ii\right)}}{\rm sign}\left\langle i_{1},i_{1}+1,\dots,\widehat{i_{j}},\dots,i_{r},i_{r}+1\right\rangle \,{\rm is\,independent\,of\,}j.\label{eq:condition ii}
      	\end{equation}
      	Since $n=k+m$ the twistor coordinates are given by determinants of
      	square matrices, so we have
      	\[
      		\left\langle i_{1},i_{1}+1,\dots,\widehat{i_{j}+\epsilon},\dots,i_{r},i_{r}+1\right\rangle =\det\left(\begin{matrix}C\\
      		I_{i_{1},i_{1}+1,\ldots,\widehat{i_{j}+\epsilon},\ldots,i_{r},i_{r}+1}
      		\end{matrix}\right)\det(\mathcal{Z}),
      	\]
      	where $I_{i_{1},i_{1}+1,\ldots,\widehat{i_{j}+\epsilon},\ldots,i_{r},i_{r}+1}$
      	is the $m\times\left(k+m\right)$ matrix whose $l$th row has a $1$
      	at the $l$th index of the list $i_{1},i_{1}+1,\ldots,\widehat{i_{j}+\epsilon},\ldots,i_{r},i_{r}+1$
      	and zeros elsewhere. Using the standard expansion of the determinant
      	we get
      	\[
      		\det\left(\begin{matrix}C\\
      		I_{i_{1},i_{1}+1,\ldots,\widehat{i_{j}+\epsilon},\ldots,i_{r},i_{r}+1}
      		\end{matrix}\right)=\left(-1\right)^{k+i_{j}-\epsilon}\det\left(C_{[n]\setminus\{i_{1},i_{1}+1,\ldots,\widehat{i_{j}+\epsilon},\ldots,i_{r},i_{r}+1\}}\right).
      	\]
      	Hence, since $\mathcal{Z}\in{\rm Mat}_{n,n}^{>0}$ and $C\in\grsp{k,k+m}$
      	we obtain 
      	\[
      		{\rm sign}\left\langle i_{1},i_{1}+1,\dots,\widehat{i_{j}+\epsilon},\dots,i_{r},i_{r}+1\right\rangle =\left(-1\right)^{k+i_{j}+\epsilon}.
      	\]
      	Thus, condition $\left(i\right)$ is always satisfied and condition
      	$\left(ii\right)$ is satisfied precisely if
      	\begin{equation}
      		i_{1}=i_{2}=\cdots=i_{r}\mod2.\label{eq:condition of giving crossing}
      	\end{equation}
      	Thus the crossing number is equal to the number of sequences $\left(i_{1},\ldots,i_{r}\right)$
      	between $1$ and $n-1$ such that $i_{j+1}>i_{j}+1$ satisfying Eq.~(\ref{eq:condition of giving crossing}).
      	This is exactly the expression of the crossing number given in Eq.~(\ref{eq: explicit formula crossing}).

      	\newpage{}
      							      	
      	\bibliographystyle{alpha}
      	\bibliography{Bibliographie_Amplituhedron}
      							      							      	
\end{document}